
\documentclass[a4paper]{amsart}

\usepackage[utf8]{inputenc}
\usepackage[T1]{fontenc}
\usepackage{lmodern}
\usepackage{microtype}
\usepackage{amscd}
\usepackage[pdftitle={...},
 pdfauthor={Alcides Buss, Siegfried Echterhoff, Rufus Willett},
 pdfsubject={Mathematics}
]{hyperref}
\usepackage[lite]{amsrefs}

\bibliographystyle{abbrv}

\usepackage{verbatim}

\usepackage{amsfonts}

\usepackage{amsthm}

\usepackage{amssymb}

\usepackage{amsmath}

\usepackage{mathabx}

\usepackage{enumerate}

\usepackage[all]{xy}

\usepackage{graphicx}

\usepackage{hyperref}

\newcommand{\N}{\mathbb{N}}

\newcommand{\C}{\mathbb{C}}

\newcommand{\Manoa}{M\=anoa}

\newcommand{\Hawaii}{Hawai\kern.05em`\kern.05em\relax i}

\newcommand{\inj}{\mathrm{inj}}

\newcommand{\red}{r}


\usepackage{xcolor}

\DeclareMathOperator{\Aut}{Aut}

\DeclareMathOperator{\Prim}{\mathrm{Prim}}

\newcommand{\op}{{\operatorname{op}}}

\newcommand{\free}{\mathbb{F}}
\newcommand{\Bd}{\mathbb{B}}

\newcommand*{\nb}{\nobreakdash}
\newcommand*{\Star}{\(^*\)\nobreakdash-}


\newcommand*{\K}{\mathbb K}
\newcommand*{\M}{\mathcal{M}} 

\newcommand*{\cont}{C}
\newcommand*{\contz}{\cont_0}
\newcommand*{\contc}{\cont_c}

\newcommand*{\id}{\textup{id}}
\newcommand*{\Ad}{\textup{Ad}}

\newcommand*{\U}{\mathcal U}





\newcommand*{\sbe}{\subseteq} 

\newcommand*{\cstar}{\texorpdfstring{$C^*$\nobreakdash-\hspace{0pt}}{*-}}

\newcommand*{\into}{\hookrightarrow}
\newcommand*{\onto}{\twoheadrightarrow}
\renewcommand*{\max}{\mathrm{max}}






\theoremstyle{plain}
\newtheorem{theorem}{Theorem}[section]
\newtheorem{lemma}[theorem]{Lemma}
\newtheorem{corollary}[theorem]{Corollary}
\newtheorem{proposition}[theorem]{Proposition}

\newtheorem{definition-theorem}[theorem]{Definition / Theorem}


\newtheorem*{conjecture*}{Conjecture}
\newtheorem*{theorem*}{Theorem}

\theoremstyle{definition}
\newtheorem{definition}[theorem]{Definition}
\newtheorem{example}[theorem]{Example}


\theoremstyle{remark}
\newtheorem{remark}[theorem]{Remark}


\newtheorem*{example*}{Example}  
\newtheorem*{remark*}{Remark}



\begin{document}
\title{Injectivity, crossed products, and amenable group actions}
\author{Alcides Buss}
\email{alcides.buss@ufsc.br}
\address{Departamento de Matem\'atica\\
 Universidade Federal de Santa Catarina\\
 88.040-900 Florian\'opolis-SC\\
 Brazil}

\author{Siegfried Echterhoff}
\email{echters@uni-muenster.de}
\address{Mathematisches Institut\\
 Westf\"alische Wilhelms-Universit\"at M\"un\-ster\\
 Einsteinstr.\ 62\\
 48149 M\"unster\\
 Germany}

\author{Rufus Willett}
\email{rufus@math.hawaii.edu}
\address{Mathematics Department\\
 University of \Hawaii~at \Manoa\\
Keller 401A \\
2565 McCarthy Mall \\
 Honolulu\\
 HI 96822\\
USA}

\begin{abstract}
This paper is motivated primarily by the question of when the maximal and reduced crossed products of a $G$-$C^*$-algebra agree (particularly inspired by results of Matsumura and Suzuki), and the relationships with various notions of amenability and injectivity.  We give new connections between these notions.  Key tools in this include the natural equivariant analogues of injectivity, and of Lance's weak expectation property: we also give complete characterizations of these equivariant properties, and some connections with injective envelopes in the sense of Hamana.
\end{abstract}

\maketitle

\tableofcontents

\section{Introduction}

This paper studies the relationship between various notions of amenability of actions of a group $G$ on a $C^*$-algebras $A$ and the `weak containment' question of whether $A\rtimes_{\max}G=A\rtimes _r G$.  We were motivated by trying to elucidate relationships between the following works:
\begin{itemize}
\item recent interesting examples of Suzuki \cite{Suzuki:2018qo} showing a disconnect between notions of amenable actions and the equality $A\rtimes_{\max}G=A\rtimes _r G$;
\item Matsumura's work \cite{Matsumura:2012aa} relating the property $A\rtimes_{\max}G=A\rtimes _r G$ to amenability (at least in the presence of exactness of the acting group);
\item the extensive work of Anantharaman-Delaroche on amenable actions, most relevantly for this paper in \cite{Anantharaman-Delaroche:1987os} and \cite{Anantharaman-Delaroche:2002ij};
\item the seminal theorem of Guentner-Kaminker \cite{Guentner:2022hc} and Ozawa \cite{Ozawa:2000th} relating exactness to existence of amenable actions;
\item our study of the so-called maximal injective crossed product \cite{Buss:2018nm} and the connections to equivariant versions of injectivity;
\item Hamana's classical study of equivariant injective envelopes \cite{Hamana:1985aa} and the recent important exploitation of these ideas by Kalantar and Kennedy in their work on the Furstenberg boundary \cite{Kalantar:2014sp}.
\end{itemize}
Our goal was to try to bridge connections between some of this in a way that we hope systematises some of the existing literature a little better, as well as solving some open problems.  As the material is by nature somewhat technical, we will refrain from giving precise statements in this introduction, but just some flavour.  Our results are perhaps most easily explained by discussing the contents of the paper, which we now do.

In Section \ref{basic sec}, we recall two notions of amenable action on a $C^*$-algebra due to Claire Anantharaman-Delaroche, and recall the theorems of Guentner-Kaminker and Ozawa on exactness.  We also make a simple observation based on this (Proposition \ref{G-exact-char-amenable}) that will be used over and over again in one form or another throughout the paper: roughly, this says that given an exact group $G$, for $A$ to be amenable, it is sufficient that there exists an equivariant ucp map from $\ell^\infty(G)$ to the centre of the multiplier algebra $\M(A)$ (or the center of the double dual $A^{**}$).  In Section \ref{suzuki sec}, we briefly recall Suzuki's examples.  We then show that while they do not have Anantharaman-Delaroche's strong amenability property, they do satisfy a version of Exel's  approximation property and are therefore amenable in the sense of Anantharaman-Delaroche.  The key idea here is to drop a precise centrality condition in favour of some form of `quasi-centrality'.

Motivated by Suzuki's examples, in Section \ref{weak con sec} we try to find reasonable conditions on $A$ that characterize when $A\rtimes_{\max} G=A\rtimes_\red G$ in general.  We are able to do this (at least in the presence of exactness) in terms of injectivity-type conditions on representations (Corollary \ref{inj lem}), and in terms of a weak amenability-like condition that we call commutant amenability (Theorem \ref{com amen the}).  We should explicitly say that while these results seem theoretically useful, they have the drawback that they are quite difficult to check in concrete examples without knowing something a priori stronger.  In Section \ref{comm case sec}, we continue our study of weak containment.  Following ideas of Matsumura, we now bring the Haagerup standard form of the double dual into play, and use this to get more precise results on weak containment somewhat generalizing Matusmura's: the most satisfactory of these are in the setting of actions of exact groups on commutative $C^*$-algebras (Theorem \ref{com ex the}), but we also have partial results for noncommutative algebras, and non-exact groups.  

In Section \ref{exact sec}, we go back to the relationship with exactness.  Thanks to the above-mentioned work of Guentner-Kaminker and Ozawa, it is well-known that a group $G$ is exact if and only if it admits an amenable action on a compact space.  It is thus natural to ask whether the analogous result holds for amenable actions on unital, possibly noncommutative $C^*$-algebras, i.e.\ is it true that $G$ is exact if and only if it admits an amenable action on a unital $C^*$-algebra? The answer is (clearly) `yes' if `amenable' in this statement means what Anantharaman-Delaroche calls strong amenability, but this is less clear in general.  We show in fact (see Theorem \ref{com amen ex}) that the answer is `yes' in the strong sense that a group $G$ is exact if and only if it admits a commutant amenable action on a unital $C^*$-algebra; commutant amenability is the weakest reasonable notion of amenability that we know of.

Section \ref{inj and wep sec} studies equivariant versions of injectivity, and of Lance's weak expectation property; these are used throughout the paper, but here we look at them more seriously.  In particular, we give complete characterizations of when a (unital) $G$-algebra has these properties in terms of amenability and of the underlying non-equivariant versions (Theorems \ref{g wep vs wep} and \ref{g inj vs inj}); again, exactness turns out to play a fundamental (and quite subtle) role.  Finally, in Section \ref{hamana sec} we discuss the relationship of our notion of injectivity to that introduced by Hamana (fortunately, they turn out to be the same), and to his equivariant injective envelopes.  We give applications of this material to a conjecture of Ozawa on nuclear subalgebras of injective envelopes (Corollary \ref{oz con cor}), and to the existence of amenable actions on injective envelopes (Theorem \ref{inj env ex}).  In both cases, our results generalize work of Kalantar and Kennedy.

The initial motivation for this paper grew out of the relation of injectivity and the weak expectation property with the maximal injective crossed product functor as studied in \cite{Buss:2018nm}. It is interesting to note that this functor is, in a sense, `dual' to the minimal exact crossed product which was studied by the authors in \cite{Buss:2018pw} and has a close relation to the Baum-Connes conjecture. The present paper shows that the 
maximal injective crossed product nicely relates to amenability.

This paper started during visits of the first and third authors to the University of Münster. The first and third authors are grateful for the warm hospitality provided by the second author and the operator algebra group of that university.

\section{Notation, basic definitions and preliminaries}\label{basic sec} 

We use the following notation.  The abbreviations ucp and ccp stand for `unital completely positive' and `contractive completely positive' respectively.  Throughout the paper, $G$ always refers to a \emph{discrete} group.  A \emph{$G$-algebra} will always refer to a $C^*$-algebra $A$ equipped with an action of $G$ by $*$-automorphisms (we will not really discuss any algebras that are not $C^*$-algebras, so this should not lead to confusion).  A \emph{$G$-space} will always refer to a locally compact Hausdorff space $X$ equipped with an action of $G$ by homeomorphisms; note that if $X$ is a $G$-space, then $A=C_0(X)$ is a $G$-algebra and vice versa.  Generally, we will not explicitly introduce notation for the action unless it is needed.

Given a $G$-algebra $A$, we will equip various associated algebras with the canonically induced $G$-actions without explicitly stating this.  Thus for example this applies to the multiplier algebra $\M(A)$, the double dual $A^{**}$, the opposite algebra $A^{\op}$, and the centre $Z(A)$.  Given $G$-algebras $A$ and $B$, we will always equip the spatial and maximal tensor products $A\otimes B$ and $A\otimes_{\max}B$ with the associated diagonal $G$-action unless explicitly stated otherwise.  

A map $\phi:A\to B$ between sets with $G$-actions $\alpha$ and $\beta$ is \emph{equivariant} if $\phi(\alpha_g(a))=\beta_g(\alpha(a))$ for all $a\in A$ and $g\in G$.  We will also call equivariant maps \emph{$G$-maps} and allow other similar modifiers as appropriate (for example `ucp $G$-map', `$G$-embedding', ...).   Relatedly, a $G$-subalgebra $A$ of $B$ will be a C*-subalgebra that is invariant under the $G$-action (and is therefore a $G$-algebra in its own right).  Equivalently, we can think of a $G$-subalgebra $A$ of a $G$-algebra $B$ as a $G$-algebra $A$ equipped with a $G$-embedding $\iota:A\into B$.

The $C^*$-algebra $\ell^\infty(G)$ will play a special role in this paper.  It is always considered as a $G$-algebra via the (left) translation action defined by 
$$
(\gamma_gf)(h):=f(g^{-1}h).
$$

The following definitions are (to the best of our knowledge) due to Claire Anantharaman-Delaroche: the definition of positive type functions is from \cite[Definition~2.1]{Anantharaman-Delaroche:1987os}, amenability of an action is from \cite[Definition~4.1]{Anantharaman-Delaroche:1987os}. 
If $A$ is commutative or unital, then our notion of strong amenability as defined below is equivalent to a notion of strong amenability due to  Ananthraman-Delaroche \cite[Definition~6.1]{Anantharaman-Delaroche:2002ij} (see Lemma~\ref{lem:strong-amen=Claire} below). In general our notion of strong amenability could possibly be weaker than the one introduced by Anantharaman-Delaroche.

\begin{definition}\label{def-amenable}
Let $A$ be a $G$-algebra with associated action $\alpha$.  

A function $\theta:G\to A$ is \emph{positive type} if for any finite subset $\{g_1,...,g_n\}$ of $G$, the matrix 
$$
\big(\alpha_{g_i}(g_i^{-1}g_j)\big)_{i,j}\in M_n(A)
$$
is positive. 

We say that $A$ is \emph{amenable} if there exists a net $(\theta_i:G\to Z(A^{**}))_{i\in I}$ of positive type functions such that:
\begin{enumerate}[(i)]
\item each $\theta_i$ is finitely supported;
\item for each $i$, $\theta_i(e)\leq 1$;
\item for each $g\in G$, $\theta_i(g)\to 1$ ultraweakly as $i\to\infty$.
\end{enumerate}

And we call $A$ \emph{strongly amenable} if there exists a net $(\theta_i:G\to Z\M(A))_{i\in I}$ of positive type functions such that:
\begin{enumerate}[(i)]
\item each $\theta_i$ is finitely supported;
\item for each $i$, $\theta_i(e)\leq 1$;
\item for each $g\in G$, $\theta_i(g)\to 1$ strictly as $i\to\infty$.
\end{enumerate}
\end{definition}

\begin{remark} Our notion of an amenable $G$-algebra should not be mistaken by the notion of an amenable (i.e., nuclear) $C^*$-algebra.
The terminology for amenable actions is unfortunately not completely consistent in the literature.  The notion of strong amenability also appears as \cite[Definition 4.3.1]{Brown:2008qy} in the text of Brown and Ozawa (although only in the special case of unital $G$-algebras).  There it is just called \emph{amenability}.  The notion of amenability as defined above is equivalent to what is called \emph{weak amenability} in \cite[Definition~6.1]{Anantharaman-Delaroche:2002ij} (see \cite[Proposition~6.4]{Anantharaman-Delaroche:2002ij}) for nuclear $G$-algebras.  However, it is not clear if this is true in the non-nuclear case, so we will keep to the terminology `amenability'. Observe that strong amenability always implies amenability,
since the canonical inclusion $\M(A)\into A^{**}$ is strict to ultraweak continuous.

For commutative $G$-algebras $A=\contz(X)$, the notions of amenability and strong amenability are the same, and both are equivalent to amenability of the $G$-space $X$, see \cite[Th\'{e}or\`{e}me 4.9 and Remarque~4.10]{Anantharaman-Delaroche:1987os}.  
However, we will see in Section \ref{suzuki sec} below that they are \emph{not} the same in general even if we restrict to nuclear, unital $C^*$-algebras.

The following lemma will get used many times in the paper.

\begin{lemma}\label{ucp pass}
Let $A$ and $B$ be $G$-algebras and suppose there exists a strictly continuous ucp $G$-map $\phi:Z\M(A)\to Z\M(B)$. Then,  if $A$ is strongly amenable, so is $B$.
\end{lemma}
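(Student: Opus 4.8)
The plan is to push the given net forward through $\phi$ and verify directly that the image net witnesses strong amenability of $B$. Suppose $A$ is strongly amenable, witnessed by a net $(\theta_i:G\to Z\M(A))_{i\in I}$ of finitely supported positive type functions with $\theta_i(e)\leq 1$ and $\theta_i(g)\to 1$ strictly for each $g$. I would set $\psi_i:=\phi\circ\theta_i:G\to Z\M(B)$ and claim that $(\psi_i)_{i\in I}$ satisfies the three defining conditions for $B$.

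First I would dispatch the three numbered conditions, which are each immediate from one property of $\phi$. Since $\phi(0)=0$, the function $\psi_i$ vanishes wherever $\theta_i$ does, so $\psi_i$ is finitely supported, giving (i). Because $\phi$ is unital and positive, $\theta_i(e)\leq 1$ yields $\psi_i(e)=\phi(\theta_i(e))\leq\phi(1)=1$, which is (ii). For (iii), strict continuity of $\phi$ applied to $\theta_i(g)\to 1$ gives $\psi_i(g)=\phi(\theta_i(g))\to\phi(1)=1$ strictly.

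The one point requiring genuine care is that each $\psi_i$ is again of positive type. Fixing a finite set $\{g_1,\dots,g_n\}\subseteq G$ and writing $\alpha,\beta$ for the (induced) actions on $Z\M(A)$ and $Z\M(B)$, I would use equivariance of $\phi$ to rewrite the defining matrix as
$$
\big(\beta_{g_k}(\psi_i(g_k^{-1}g_l))\big)_{k,l}=\big(\phi(\alpha_{g_k}(\theta_i(g_k^{-1}g_l)))\big)_{k,l}=\phi^{(n)}\Big(\big(\alpha_{g_k}(\theta_i(g_k^{-1}g_l))\big)_{k,l}\Big),
$$
where $\phi^{(n)}=\phi\otimes\id_{M_n}$ is the $n$-th matrix amplification. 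The inner matrix is positive precisely because $\theta_i$ is of positive type, and $\phi^{(n)}$ is positive because $\phi$ is completely positive; hence the whole expression is positive in $M_n(Z\M(B))$, as required.

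The main (and essentially only) obstacle is this positive type verification, and its resolution hinges on the interplay of two hypotheses on $\phi$: equivariance, which lets me pull $\phi$ through the twist by $\beta_{g_k}$, and complete positivity, which is needed to preserve positivity of an $n\times n$ matrix rather than merely of individual elements. The remaining hypotheses, unitality and strict continuity, are exactly what transports conditions (ii) and (iii). I expect no further subtleties, since $1\in Z\M(A)$ and strict convergence to $1$ make sense in the multiplier algebras throughout.
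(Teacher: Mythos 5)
Your proposal is correct and takes essentially the same approach as the paper: the paper's proof consists of the single remark that the pushed-forward net $(\phi\circ\theta_i)$ works, leaving the verification as "not difficult to check." Your write-up supplies exactly those details, with the key step (positive type preservation via equivariance plus complete positivity through the matrix amplification $\phi^{(n)}$) handled correctly.
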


\begin{proof}
If $(\theta_i)$ is a net with the properties required to show strong amenability of $A$, then it is not difficult to check that $(\phi\circ \theta_i)$ has the properties required to show strong amenability of $B$.
\end{proof}

\begin{remark}
Notice that the above result applies, in particular, if $A$ is unital and there is a ucp $G$-map $Z(A)\to Z\M(B)$. 
\end{remark}

In \cite[Definition~6.1]{Anantharaman-Delaroche:2002ij}, Anantharaman-Delaroche defines a (possibly noncommutative) $G$-algebra $A$ to be {\em strongly amenable} if there exists an amenable $G$-space $X$ and a nondegenerate $G$-equivariant $*$-homomorphism $\Phi:C_0(X)\to Z\M(A)$. 

\end{remark}
\begin{lemma}\label{lem:strong-amen=Claire}
Every strongly amenable $G$-algebra in the sense of \cite[Definition~6.1]{Anantharaman-Delaroche:2002ij} is strongly amenable in the sense of  Definition \ref{def-amenable} above. If the $G$-algebra $A$ is unital or commutative, then both notions of strong amenability coincide.
\end{lemma}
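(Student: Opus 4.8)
The plan is to prove the two assertions separately: the general implication (Anantharaman-Delaroche strong amenability implies strong amenability in the sense of Definition~\ref{def-amenable}) first, and then the reverse implication in the unital and commutative cases. Suppose first that $A$ is strongly amenable in the sense of \cite{Anantharaman-Delaroche:2002ij}, so that we are given an amenable $G$-space $X$ and a nondegenerate equivariant $*$-homomorphism $\Phi\colon C_0(X)\to Z\M(A)$, where ``nondegenerate'' means $\overline{\Phi(C_0(X))A}=A$. My first step is to pass to the multiplier level: since $\Phi$ is nondegenerate it has a unique extension to a unital, equivariant $*$-homomorphism $\overline{\Phi}\colon C_b(X)=\M(C_0(X))\to\M(A)$ that is strictly continuous on bounded sets. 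I would check that $\overline{\Phi}$ still takes values in $Z\M(A)$: the center $Z\M(A)$ is strictly closed in $\M(A)$ (if $m_i\to m$ strictly with each $m_i$ central then $ma=\lim m_i a=\lim a m_i=am$ for all $a\in A$), and $\overline{\Phi}(f)$ is a bounded strict limit of elements $\Phi(fe_\lambda)\in Z\M(A)$ for an approximate unit $(e_\lambda)$ of $C_0(X)$. Because $X$ is amenable, amenability of $C_0(X)$ (equivalently strong amenability, by the commutative case recalled above) supplies a net $(\xi_i\colon G\to Z\M(C_0(X))=C_b(X))$ of finitely supported positive type functions with $\xi_i(e)\le 1$ and $\xi_i(g)\to 1$ strictly for each $g$. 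I then set $\theta_i:=\overline{\Phi}\circ\xi_i\colon G\to Z\M(A)$.

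It then remains to verify that $(\theta_i)$ witnesses strong amenability of $A$. Finite support is immediate, and $\theta_i(e)=\overline{\Phi}(\xi_i(e))\le\overline{\Phi}(1)=1$ since $\overline{\Phi}$ is positive and unital. Positivity of type is preserved because $\overline{\Phi}$ is an equivariant $*$-homomorphism: writing $\beta$ for the action on $C_b(X)$ and applying $\overline{\Phi}\otimes\id_{M_n}$ to the positive matrix $(\beta_{g_k}(\xi_i(g_k^{-1}g_l)))_{k,l}$ yields exactly $(\alpha_{g_k}(\theta_i(g_k^{-1}g_l)))_{k,l}$, which is therefore positive. The one point that needs genuine care, and where I expect the only real work, is the strict convergence $\theta_i(g)\to 1$. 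Here I would argue on the dense subset $\Phi(C_0(X))A$ of $A$: for $b=\Phi(f)a$ with $f\in C_0(X)$ and $a\in A$ one has $\overline{\Phi}(\xi_i(g))b=\Phi(\xi_i(g)f)a$, and $\xi_i(g)f\to f$ in norm because $\xi_i(g)\to 1$ strictly in $\M(C_0(X))$. Since positivity of type together with $\xi_i(e)\le 1$ forces $\|\xi_i(g)\|\le 1$, the elements $\overline{\Phi}(\xi_i(g))$ are uniformly bounded, so the convergence on a dense set upgrades to $\overline{\Phi}(\xi_i(g))a\to a$ for all $a\in A$, and symmetrically on the other side, giving strict convergence to $1$.

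For the second assertion I would read off the required amenable space and homomorphism directly from the center. If $A$ is unital then $Z\M(A)=Z(A)$ is a commutative unital $C^*$-algebra, say $Z(A)\cong C(\Omega)$ for a compact Hausdorff $G$-space $\Omega$, and the strict topology on $\M(A)=A$ is the norm topology. A net witnessing strong amenability of $A$ in the sense of Definition~\ref{def-amenable} is then a net $(\theta_i\colon G\to C(\Omega))$ of finitely supported positive type functions with $\theta_i(e)\le 1$ and $\theta_i(g)\to 1$ in norm; since positivity of type computed in $Z(A)$ and in the subalgebra $C(\Omega)$ coincide, this is precisely a witness to strong amenability of $C(\Omega)$, hence to amenability of the $G$-space $\Omega$. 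Taking $X=\Omega$ and $\Phi=\id\colon C(\Omega)\to C(\Omega)=Z\M(A)$ produces the nondegenerate equivariant $*$-homomorphism required by the definition of \cite{Anantharaman-Delaroche:2002ij}.

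Finally, if $A=C_0(Y)$ is commutative then $Z\M(A)=C_b(Y)$, and by the equivalence recalled above strong amenability of $A$ is exactly amenability of the $G$-space $Y$. Taking $X=Y$ and $\Phi$ the canonical inclusion $C_0(Y)\into C_b(Y)=Z\M(A)$, which is nondegenerate since $\overline{C_0(Y)C_0(Y)}=C_0(Y)$, completes the proof. Thus the only delicate ingredient is the strict-continuity argument in the forward direction; the remaining verifications, and both unital and commutative cases of the converse, are essentially a matter of unwinding the definitions.
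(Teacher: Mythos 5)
Your proposal is correct and follows essentially the same route as the paper's proof: your forward direction is exactly the paper's appeal to Lemma~\ref{ucp pass} (composing the positive type functions witnessing strong amenability of $C_0(X)$ with the strictly continuous extension $\M(C_0(X))\to Z\M(A)$ of $\Phi$) written out in full detail, and your unital case is identical to the paper's (pass to the Gelfand dual of $Z(A)$, deduce amenability of that space, and use the unital inclusion $Z(A)\into A$). The only cosmetic difference is that in the commutative case you give a short direct argument via the canonical nondegenerate inclusion $C_0(Y)\into C_b(Y)=Z\M(A)$, where the paper instead cites \cite[Proposition~6.3]{Anantharaman-Delaroche:2002ij}.
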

\begin{proof}
The first assertion follows from Lemma~\ref{ucp pass} as every nondegenerate \Star{}homo\-mor\-phism $\contz(X)\to Z\M(A)$ extends to a strictly continuous \Star{}homomorphism $\M(\contz(X))\to Z\M(A)$. If $A$ is commutative, both definitions coincide by \cite[Proposition 6.3]{Anantharaman-Delaroche:2002ij}, so let us assume now that $A$ is unital
and let $X$ be the Gelfand dual of $Z(A)$. Then strong amenability in the sense of Definition \ref{def-amenable}  implies that $C(X)$ is an amenable 
$G$-algebra, thus $X$ is an amenable $G$-space by \cite[Th\'{e}or\`{e}me 4.9 and Remarque~4.10]{Anantharaman-Delaroche:1987os}. 
Since the inclusion $C(X)\cong Z(A)\into A$ is unital (hence nondegenerate), $A$ is strongly amenable in the sense of 
\cite[Definition~6.1]{Anantharaman-Delaroche:2002ij}.
\end{proof}

The following important theorem is due to Ozawa \cite{Ozawa:2000th} and (partially) Guentner-Kaminker \cite{Guentner:2022hc}.  For background on exact groups, see for example \cite{Willett:2009rt} or \cite[Chapter 5]{Brown:2008qy}.

\begin{theorem}\label{ozawa the}
For a discrete group $G$, the following are equivalent:
\begin{enumerate}[(i)]
\item $G$ is exact;
\item the canonical $G$-action on $\ell^\infty(G)$ is strongly amenable.  \qed
\end{enumerate}
\end{theorem}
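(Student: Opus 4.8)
The plan is to reduce the statement to the classical equivalence between exactness and topological amenability of the canonical action on the Stone--\v{C}ech compactification $\beta G$, exploiting that $\ell^\infty(G)$ is a \emph{commutative, unital} $G$-algebra. Since $\ell^\infty(G)$ is unital, $Z\M(\ell^\infty(G))=\ell^\infty(G)$ and the strict topology on this multiplier algebra coincides with the norm topology; thus strong amenability of $\ell^\infty(G)$ in the sense of Definition~\ref{def-amenable} asks exactly for a net of finitely supported positive type functions $\theta_i\colon G\to\ell^\infty(G)$ with $\theta_i(e)\le 1$ and $\theta_i(g)\to 1$ uniformly over $G$ for each fixed $g$. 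Writing $\ell^\infty(G)=C(\beta G)$ and invoking the equivalence recorded in the Remark following Definition~\ref{def-amenable} (via \cite[Th\'eor\`eme 4.9 and Remarque 4.10]{Anantharaman-Delaroche:1987os}), this is the same as saying that the $G$-space $\beta G$ is (topologically) amenable. Hence it suffices to prove: $G$ is exact if and only if $\beta G$ is an amenable $G$-space.

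For the implication (ii)$\Rightarrow$(i), I would run the standard crossed-product argument. Amenability of the $G$-space $\beta G$ implies, by Anantharaman-Delaroche's theorem, that the reduced crossed product $\ell^\infty(G)\rtimes_\red G=C(\beta G)\rtimes_\red G$ is nuclear. Since $\ell^\infty(G)$ is unital, the reduced group $C^*$-algebra $\Cst_\red(G)$ sits inside $\ell^\infty(G)\rtimes_\red G$ as a $C^*$-subalgebra (via the unital equivariant inclusion $\C\into\ell^\infty(G)$); as a subalgebra of a nuclear (hence exact) $C^*$-algebra is exact, we conclude that $\Cst_\red(G)$ is exact, which is the definition of exactness of $G$.

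The implication (i)$\Rightarrow$(ii) is the substantial one, and is precisely the content of the theorem of Ozawa \cite{Ozawa:2000th} (building on Guentner-Kaminker \cite{Guentner:2022hc} and Higson-Roe). Here one must manufacture, out of the abstract exactness hypothesis, an explicit net of finitely supported positive-definite kernels on $G$ that converge to $1$ on each ``tube'' $\{(s,t):s^{-1}t=g\}$; under the correspondence $\tilde k(s,t)=\theta(s^{-1}t)(s)$ these kernels are exactly the positive type functions $\theta_i\colon G\to\ell^\infty(G)$ required in (ii). Concretely, I would pass through the characterization of exactness as nuclearity of $\ell^\infty(G)\rtimes_\red G$ and then use the nontrivial converse direction of Anantharaman-Delaroche's theorem---nuclearity of the reduced crossed product of a commutative $G$-algebra forces amenability of the action---to produce the amenability net for $\beta G$. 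This construction, realizing Property~A from exactness, is the main obstacle; everything else is bookkeeping through the commutative dictionary above.
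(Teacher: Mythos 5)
The paper does not actually prove this theorem: it is stated as a known result with no argument given, attributed to Ozawa \cite{Ozawa:2000th} and (partially) Guentner--Kaminker \cite{Guentner:2022hc}, so the paper's ``proof'' is a citation. Your reduction and your easy direction are correct and match what such a citation requires: since $\ell^\infty(G)=C(\beta G)$ is unital and commutative, strong amenability in the sense of Definition \ref{def-amenable} is equivalent to topological amenability of the $G$-space $\beta G$ (exactly as recorded in the remark following that definition, via \cite[Th\'{e}or\`{e}me 4.9 and Remarque 4.10]{Anantharaman-Delaroche:1987os}), and your argument for (ii)$\Rightarrow$(i) --- amenability of $\beta G$ gives nuclearity of $C(\beta G)\rtimes_\red G$ by \cite[Th\'{e}or\`{e}me 4.5]{Anantharaman-Delaroche:1987os}, $C^*_r(G)$ embeds into this crossed product because $\ell^\infty(G)$ is unital, and subalgebras of nuclear $C^*$-algebras are exact --- is the standard one and is sound.

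The gap is in your concrete route for (i)$\Rightarrow$(ii). You propose to invoke ``the characterization of exactness as nuclearity of $\ell^\infty(G)\rtimes_\red G$'' and then apply the converse direction of Anantharaman-Delaroche's theorem. But that characterization is not background you may assume: the implication from exactness of $C^*_r(G)$ to nuclearity of $\ell^\infty(G)\rtimes_\red G$ (the uniform Roe algebra) \emph{is} Ozawa's theorem in an equivalent formulation, and the known proofs of that nuclearity proceed by first establishing amenability of the $\beta G$-action and then applying \cite[Th\'{e}or\`{e}me 4.5]{Anantharaman-Delaroche:1987os} --- i.e., they pass through precisely the statement you are trying to derive from it. As written, this step is circular. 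The irreducible content of (i)$\Rightarrow$(ii), which your proposal correctly attributes to \cite{Ozawa:2000th} but does not supply, is the construction of the finitely supported positive type functions directly from exactness: Ozawa starts from nuclear embeddability of $C^*_r(G)\subseteq\Bd(\ell^2(G))$ and uses a compression/averaging argument to manufacture positive definite kernels supported in tubes and converging to $1$, which under your dictionary are exactly the required $\theta_i$. If you are content to cite \cite{Ozawa:2000th} for this step --- as the paper itself is --- then the rest of your bookkeeping assembles into a correct proof; but the nuclearity detour you sketch cannot substitute for that citation.
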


\begin{proposition}\label{G-exact-char-amenable}
Let $G$ be an exact group and $A$ be a $G$-algebra.  The following hold.
\begin{enumerate}[(i)]
\item If there exists a ucp $G$-map $\ell^\infty(G)\to Z(\M(A))$, then $A$ is strongly amenable.
\item There exists a ucp $G$-map $\ell^\infty(G)\to Z(A^{**})$ if and only if $A$ is amenable.
\end{enumerate}
As a consequence, $A$ is amenable if and only if $A^{**}$ is strongly amenable.
\end{proposition}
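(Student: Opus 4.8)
The plan is to leverage the two facts already in hand: exactness of $G$ makes $\ell^\infty(G)$ strongly amenable (Theorem~\ref{ozawa the}), and strong amenability is transported along strictly continuous ucp $G$-maps of central multiplier algebras (Lemma~\ref{ucp pass}). The elementary observation that makes everything fit is that $\ell^\infty(G)$ is unital, so $Z\M(\ell^\infty(G))=\ell^\infty(G)$ and its strict topology coincides with the norm topology; likewise $A^{**}$ is unital, so $Z\M(A^{**})=Z(A^{**})$ and strict convergence there is norm convergence. For (i), since $G$ is exact, $\ell^\infty(G)$ is strongly amenable, and any ucp $G$-map $\ell^\infty(G)=Z\M(\ell^\infty(G))\to Z\M(A)$ is automatically strictly continuous because its unital domain carries the norm topology as its strict topology; Lemma~\ref{ucp pass} then gives that $A$ is strongly amenable.

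For the forward implication of (ii) I would simply apply (i) to the $G$-algebra $A^{**}$: a ucp $G$-map $\ell^\infty(G)\to Z(A^{**})=Z\M(A^{**})$ shows $A^{**}$ is strongly amenable. A net $(\theta_i)$ witnessing this takes values in $Z\M(A^{**})=Z(A^{**})$, is finitely supported with $\theta_i(e)\le 1$, and converges strictly to $1$; by unitality this strict convergence is norm and hence ultraweak convergence, so $(\theta_i)$ is exactly a net witnessing amenability of $A$ in the sense of Definition~\ref{def-amenable}.

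The reverse implication of (ii) is the technical heart, and the step I expect to be the main obstacle: it is the only place where one must build a map rather than transport one, and notably it does not use exactness. Starting from an amenability net $(\theta_i:G\to Z(A^{**}))$, I would use that $Z(A^{**})$ is a commutative von Neumann algebra to extract from the positivity of the matrices $(\alpha_{g_j}(\theta_i(g_j^{-1}g_k)))_{j,k}$ finitely supported ``square roots'' $\xi_i:G\to Z(A^{**})$ satisfying $\sum_g \xi_i(g)^*\xi_i(g)=\theta_i(e)\le 1$ and $\sum_g \xi_i(g)^*\alpha_s(\xi_i(s^{-1}g))=\theta_i(s)$ (a Kolmogorov/Gram decomposition of the positive-definite kernel, carrying the $G$-action along). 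These define ccp maps $\phi_i:\ell^\infty(G)\to Z(A^{**})$, $\phi_i(f):=\sum_g \xi_i(g)^*f(g)\xi_i(g)$, with $\phi_i(1)=\theta_i(e)\le 1$. The identity $\sum_g |\alpha_s(\xi_i(s^{-1}g))-\xi_i(g)|^2=\theta_i(e)+\alpha_s(\theta_i(e))-\theta_i(s)-\theta_i(s)^*$ together with $\theta_i(e),\theta_i(s)\to 1$ shows the $\xi_i$ are asymptotically invariant, and a Cauchy--Schwarz estimate in $\ell^2(G,Z(A^{**}))$ then gives $\|\phi_i(\gamma_s f)-\alpha_s(\phi_i(f))\|\to 0$ for each fixed $s$ and $f$. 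Passing to a point-ultraweak cluster point $\phi$ of $(\phi_i)$ in the (point-ultraweakly compact) set of ccp maps, and using normality of the $\alpha_s$ together with $\theta_i(e)\to 1$, yields a genuine ucp $G$-map $\ell^\infty(G)\to Z(A^{**})$. The delicate points are the fibrewise/measurable extraction of the $\xi_i$ with correct bookkeeping of the $G$-action, and the Cauchy--Schwarz argument that upgrades the asymptotic-invariance identity into exact equivariance of the limit.

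Finally, the displayed consequence follows by chaining the pieces: if $A$ is amenable, the reverse half of (ii) produces a ucp $G$-map $\ell^\infty(G)\to Z(A^{**})$, whence (i) applied to $A^{**}$ gives that $A^{**}$ is strongly amenable; conversely, strong amenability of $A^{**}$ furnishes, as in the forward half of (ii), an amenability net for $A$.
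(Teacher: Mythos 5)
Your treatment of part (i) and of the implication ``ucp $G$-map $\Rightarrow$ $A$ amenable'' in (ii) is exactly the paper's: exactness makes $\ell^\infty(G)$ strongly amenable (Theorem \ref{ozawa the}), Lemma \ref{ucp pass} transports this (your remark that strict continuity is automatic for ucp maps out of the unital algebra $\ell^\infty(G)$ is precisely the point the paper leaves implicit), and since $A^{**}$ is unital, strict convergence is norm convergence, hence ultraweak, so strong amenability of $A^{**}$ witnesses amenability of $A$. Where you genuinely diverge is the direction ``$A$ amenable $\Rightarrow$ ucp $G$-map exists'': the paper disposes of it in two lines by citing Anantharaman-Delaroche's Th\'{e}or\`{e}me 3.3 of \cite{Anantharaman-Delaroche:1987os}, which supplies a ucp $G$-map $P\colon \ell^\infty(G,Z(A^{**}))\to Z(A^{**})$, and composing $P$ with the canonical unital $G$-embedding $\ell^\infty(G)\into\ell^\infty(G,Z(A^{**}))$. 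You instead reconstruct the needed special case of that theorem by hand (square roots $\xi_i$, the ccp maps $\phi_i(f)=\sum_g\xi_i(g)^*f(g)\xi_i(g)$, a point-ultraweak cluster point). This is a legitimate alternative: it is self-contained and makes transparent that this direction uses no exactness, at the cost of redoing work the citation buys for free.

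Two steps in your construction need repair, though neither is fatal. First, exact finitely supported square roots need not exist: the Kolmogorov/GNS decomposition of a finitely supported positive type function $\theta_i$ produces $\eta_i\in\ell^2(G,Z(A^{**}))$ with $\theta_i(g)=\langle\eta_i,\widetilde{\alpha}_g\eta_i\rangle$, but $\eta_i$ need not be finitely supported, and an exact finitely supported factorization is a Fej\'{e}r--Riesz type statement that fails for general groups. This is harmless: either drop finite support (your formula for $\phi_i$ converges for any $\ell^2$ vector), or simply invoke the paper's Lemma \ref{l2 amen}(i), which hands you finitely supported $\xi_i$ with $\langle\xi_i,\xi_i\rangle\le 1$ and $\langle\xi_i,\widetilde{\alpha}_g\xi_i\rangle\to 1$ ultraweakly; exact equality with $\theta_i$ is never needed. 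Second, and more substantively, the claimed norm convergence $\|\phi_i(\gamma_s f)-\alpha_s(\phi_i(f))\|\to 0$ does not follow: Definition \ref{def-amenable} gives only \emph{ultraweak} convergence $\theta_i(g)\to 1$, so $\langle\widetilde{\alpha}_s\xi_i-\xi_i,\widetilde{\alpha}_s\xi_i-\xi_i\rangle\to 0$ only ultraweakly, and your Cauchy--Schwarz estimate, applied to the positive form $(u,v)\mapsto\psi(\langle u,v\rangle)$ for each normal state $\psi$, yields only that $\phi_i(\gamma_s f)-\alpha_s(\phi_i(f))\to 0$ ultraweakly. (Norm convergence of amenability nets is available only for exact groups, and only as a consequence of this very proposition; see Remark \ref{rem-norm}.) Fortunately the ultraweak statement is all you need: along a subnet converging point-ultraweakly to $\phi$, normality of $\alpha_s$ gives $\alpha_s(\phi(f))=\lim\alpha_s(\phi_i(f))=\lim\phi_i(\gamma_s f)=\phi(\gamma_s f)$, and unitality follows from $\phi_i(1)=\langle\xi_i,\xi_i\rangle\to 1$. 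With these corrections your proof is complete, and your chaining of the final consequence matches the paper's.
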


\begin{proof}
The first part follows immediately from Lemma \ref{ucp pass} and Theorem \ref{ozawa the}.  

For the second part, let $\alpha$ be the action on $A^{**}$, and equip $\ell^\infty(G,Z(A^{**}))$ with the $G$-action defined by 
$$
(\widetilde{\alpha}_gf)(h):=\alpha_g(f(g^{-1}h)).
$$
If $A$ is amenable, we have by \cite[Th\'{e}or\`{e}me 3.3]{Anantharaman-Delaroche:1987os} that there is a ucp $G$-map $P\colon \ell^\infty(G,Z(A^{**}))\to Z(A^{**})$.  Composing this with the canonical unital $G$-embedding $\ell^\infty(G)\into \ell^\infty(G,Z(A^{**}))$ gives the desired ucp $G$-map $\ell^\infty(G)\to Z(A^{**})$.

Conversely, assume that there is a ucp $G$-map $\ell^\infty(G)\to Z(A^{**})$. Since $G$ is exact, its translation action on $\ell^\infty(G)$ is strongly amenable, whence $A^{**}$ is strongly amenable by Lemma \ref{ucp pass}, and so $A$ is amenable, since convergence in norm implies ultraweak convergence.
\end{proof}

\begin{remark}\label{rem-norm} Note that it follows from the above proof that for exact groups $G$ the ultraweak convergence 
 in item (iii) of the definition of an amenable $G$-algebra (see Definition  \ref{def-amenable}) can be replaced by norm convergence.
\end{remark}

We will need some equivalent versions of amenability and strong amenability 

\begin{definition}\label{standard mod}
Let $A$ be a $G$-algebra with action $\alpha$.  Define $\ell^2(G,A)$ to be the collection of all functions $\xi:G\to A$ such that 
$$
\sum_{g\in G}\xi(g)^*\xi(g)
$$
converges in the norm of $A$.  Equip $\ell^2(G,A)$ with the $A$-valued inner product defined by 
$$
\langle \xi,\eta\rangle:=\sum_{g\in G}\xi(g)^*\eta(g),
$$
the norm defined by 
$$
\|\xi\|_2:=\sqrt{\|\langle \xi,\xi\rangle\|_A}
$$
and the $G$-action $\widetilde{\alpha}$ defined by 
$$
(\widetilde{\alpha}_h\xi)(g):=\alpha_{h}(\xi(h^{-1}g)).
$$
\end{definition}

\begin{lemma}\label{l2 amen}
Let $A$ be a $G$-algebra.  
\begin{enumerate}[(i)]
\item $A$ is \emph{amenable} if  and only if there exists a net $(\xi_i:G\to Z(A^{**}))_{i\in I}$ of functions such that:
\begin{enumerate}[(i)]
\item each $\xi_i$ is finitely supported;
\item for each $i$, $\langle \xi_i,\xi_i\rangle\leq 1$;
\item for each $g\in G$, $\langle \xi_i,\widetilde{\alpha}_g\xi_i\rangle\to 1$ ultraweakly as $i\to\infty$.
\end{enumerate}
\item $A$ is \emph{strongly amenable} if and only if  there exists a net $(\xi_i:G\to Z\M(A))_{i\in I}$ of functions such that:
\begin{enumerate}[(i)]
\item each $\xi_i$ is finitely supported;
\item for each $i$, $\langle \xi_i,\xi_i\rangle\leq 1$;
\item for each $g\in G$, $\langle \xi_i,\widetilde{\alpha}_g\xi_i\rangle\to 1$ strictly as $i\to\infty$.
\end{enumerate}
\end{enumerate}
\end{lemma}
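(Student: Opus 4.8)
The plan is to prove both equivalences by passing between a positive type function $\theta$ and a function $\xi\in \ell^2(G,M)$ via the correspondence $\theta(g)=\langle\xi,\widetilde{\alpha}_g\xi\rangle$, where $M=Z(A^{**})$ in (i) and $M=Z\M(A)$ in (ii). The implication from the stated $\xi$-condition to (strong) amenability is a direct computation, so I would dispatch it first. Given a net $(\xi_i)$ as in the right-hand side, set $\theta_i(g):=\langle\xi_i,\widetilde{\alpha}_g\xi_i\rangle$. A short reindexing of the defining sums shows that for any finite $\{s_1,\dots,s_n\}\subseteq G$,
$$
\alpha_{s_k}\big(\theta_i(s_k^{-1}s_l)\big)=\langle\widetilde{\alpha}_{s_k}\xi_i,\widetilde{\alpha}_{s_l}\xi_i\rangle ,
$$
so the matrix in Definition \ref{def-amenable} is a Gram matrix and hence positive; thus each $\theta_i$ is positive type. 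Finite support of $\theta_i$ follows from that of $\xi_i$, the normalization $\theta_i(e)=\langle\xi_i,\xi_i\rangle\le 1$ is immediate, and condition (iii) for the $\theta_i$ is literally condition (iii) for the $\xi_i$, with the ultraweak/strict distinction carried along unchanged.

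For the converse I would fix a single finitely supported positive type $\theta$ with $\theta(e)\le 1$ and manufacture the $\xi$'s. The key device is to view $\theta$ as the operator $\Theta$ on the Hilbert $M$-module $\ell^2(G,M)$ with matrix entries $\Theta_{s,t}:=\alpha_s(\theta(s^{-1}t))$. Finite support of $\theta$ makes $\Theta$ a finite-band operator, hence bounded and adjointable; testing $\langle\xi,\Theta\xi\rangle$ against a finitely supported $\xi$ recovers exactly the positive type matrices, so $\Theta\ge 0$ in the $C^*$-algebra $\mathcal{L}(\ell^2(G,M))$. A direct check gives the equivariance $\alpha_u(\Theta_{s,t})=\Theta_{us,ut}$, i.e. $\widetilde{\alpha}_u\Theta=\Theta\widetilde{\alpha}_u$ for all $u$. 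I would then form $\Theta^{1/2}$ by continuous functional calculus; as a norm limit of polynomials in $\Theta$, and since each $\widetilde{\alpha}_u$ is complex-linear, bounded, and commutes with $\Theta$, the square root inherits the same equivariance. Writing $\delta_e$ for the unit vector at $e$ (here $M$ is unital) and using $\widetilde{\alpha}_g\delta_e=\delta_g$, equivariance of $\Theta^{1/2}$ yields the exact factorization
$$
\langle\xi_\infty,\widetilde{\alpha}_g\xi_\infty\rangle=\langle\delta_e,\Theta\delta_g\rangle=\theta(g),\qquad \langle\xi_\infty,\xi_\infty\rangle=\theta(e)\le 1,\qquad \xi_\infty:=\Theta^{1/2}\delta_e .
$$

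The only defect of $\xi_\infty$ is that it need not be finitely supported, so the final step is truncation and assembly. For finite $F\subseteq G$ the function $\xi_F:=1_F\cdot\xi_\infty$ is finitely supported, still satisfies $\langle\xi_F,\xi_F\rangle\le\langle\xi_\infty,\xi_\infty\rangle\le 1$, and converges to $\xi_\infty$ in $\|\cdot\|_2$ as $F\uparrow G$; continuity of the inner product then gives $\langle\xi_F,\widetilde{\alpha}_g\xi_F\rangle\to\theta(g)$ in norm for each $g$. Applying this to each member $\theta_i$ of a net witnessing amenability (resp. strong amenability) produces, for each $i$ and $F$, a finitely supported $\xi_{i,F}$ with $\langle\xi_{i,F},\xi_{i,F}\rangle\le 1$ and $\lim_F\langle\xi_{i,F},\widetilde{\alpha}_g\xi_{i,F}\rangle=\theta_i(g)$ in norm, while $\lim_i\theta_i(g)=1$ ultraweakly (resp. strictly). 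Since norm convergence implies both ultraweak and strict convergence, the iterated limits $\lim_i\lim_F\langle\xi_{i,F},\widetilde{\alpha}_g\xi_{i,F}\rangle=1$ hold for every $g$ in the relevant topology, and a standard theorem on iterated limits reindexes the $\xi_{i,F}$ into a single net with the three required properties. I expect the only genuinely delicate points to be the equivariance-preserving square root and keeping the two nested limits and the ultraweak-versus-strict topologies straight in this last assembly; everything else is routine bookkeeping.
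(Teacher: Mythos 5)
Your proposal is correct and takes essentially the same route as the paper's (sketched) proof: the forward direction via $\theta_i(g):=\langle\xi_i,\widetilde{\alpha}_g\xi_i\rangle$ and the Gram-matrix observation, and the converse via a GNS-type factorization of each positive type function followed by truncation to finitely supported vectors. The only difference is that where the paper simply cites Anantharaman-Delaroche's Proposition 2.5 for the existence of the vector $\eta_i$ with $\theta_i(g)=\langle\eta_i,\widetilde{\alpha}_g\eta_i\rangle$, you construct it explicitly as $\Theta^{1/2}\delta_e$ using the positive, equivariant, finite-band operator $\Theta$ --- which is precisely the content of that cited construction.
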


\begin{proof}
The proof is essentially contained in that of \cite[Th\'{e}or\`{e}me 3.3]{Anantharaman-Delaroche:1987os} and the proofs are essentially the same in both cases, so we just sketch the idea in the amenable case.  If $(\xi_i)$ is a net as in the statement of the lemma, then 
$$
\theta_i:G\to Z(A^{**}), \quad g\mapsto \langle \xi_i,\widetilde{\alpha}_g\xi_i\rangle
$$
satisfies the properties needed to check amenability.  Conversely, if $(\theta_i)$ is as in the definition of amenability, then the fact that each $\theta_i$ is positive type and finitely supported means we can apply a GNS-type construction for each $i$ as in \cite[Proposition 2.5]{Anantharaman-Delaroche:1987os} to get a vector $\eta_i\in \ell^2(G,Z(A^{**}))$ such that $\theta_i(g)=\langle \eta_i,\widetilde{\alpha}_g\eta_i\rangle$ for all $g\in G$.  Using that the collection of finitely supported elements is norm dense in $\ell^2(G,Z(A^{**}))$ and appropriately approximating each $\eta_i$ by some $\xi_i$ gives the result.
\end{proof}

\section{Suzuki's examples}\label{suzuki sec}

In \cite{Suzuki:2018qo}, Yuhei Suzuki produces (amongst other things) a very striking class of examples.  Let $G$ be a countable, exact, non-amenable group.  Suzuki shows in \cite{Suzuki:2018qo}*{Proposition B} that there exists a simple, unital, separable, nuclear $G$-algebra $A$ such that $A\rtimes_r G=A\rtimes_{\max} G$.  As $A$ is simple and unital, its center is just scalar multiples of the unit, so $A$ cannot be strongly amenable: if it were, $G$ would necessarily be amenable.  It is well-known that strong amenability implies equality of the maximal and reduced crossed product $C^*$-algebras, but the converse had been an open question.

Now, it is also known \cite[Proposition 4.8]{Anantharaman-Delaroche:1987os} that if $A$ is an amenable $G$-algebra, then $A\rtimes_r G=A\rtimes_{\max} G$.  The converse is again open, and so it is natural to ask if Suzuki's examples are amenable.  The answer turns out to be yes: one way to see this is to note that Suzuki's examples arise as a direct limit
$$
A\rtimes_\red G=\lim_n (A_n\rtimes_\red G)
$$  
with each $A_n$ a strongly amenable nuclear $G$-subalgebra of $A$; as $A_n$ is (strongly) amenable, $A_n\rtimes G$ is nuclear by \cite[Th\'{e}or\`{e}me 4.5]{Anantharaman-Delaroche:1987os}, whence $A\rtimes G$ is nuclear, and so $A$ is amenable by \cite[Th\'{e}or\`{e}me 4.5]{Anantharaman-Delaroche:1987os} again.

While we guess Suzuki (and others) are aware of this, it does not seem to have been explicitly recorded in his paper.  Suzuki's examples seem to be the first known examples of $G$-algebras with an amenable action that is not strongly amenable.

In the rest of this section, we will give another approach to amenability of Suzuki's algebras, partly as it is more concrete, and partly as we suspect it will be useful in other contexts.   This involves another variant of amenability in the form of an approximation property.  Although this variant will not be used in the rest of the paper, it seems a natural notion so worth including.  It is also the strongest `amenability-type' condition that we could show that Suzuki's examples satisfy, and so seems worthwhile from that point of view.

\begin{definition}\label{def:QAP}
A $G$-algebra $A$ has the \emph{quasi-central approximation property} (QAP) if there is a net $(\xi_i\colon G\to \M(A))_{i\in I}$ of finitely supported functions satisfying 
\begin{enumerate}[(i)]
\item $\langle \xi_i,\xi_i\rangle\leq 1$ for all $i$;
\item $\langle \xi_i,\tilde\alpha_g\xi_i\rangle$ converges strictly to $1$ in $\M(A)$ for all $g\in G$;
\item $\|\xi_ia - a\xi_i\|_2\to 0$ for all $a\in A$.
\end{enumerate} 
\end{definition}

One can actually assume that the functions $\xi_i$ in the definition of the QAP take their values in $A$:
\begin{lemma}\label{lem-QAP} 
Suppose that the $G$-algebra $A$ has the QAP. Then the functions $(\xi_i)_{i\in I}$ in the definition of the QAP 
can be chosen to take their values in $A$, i.e., 
there exists a net $(\xi_i\colon G\to A)_{i\in I}$ which satisfies 
conditions (i), (ii), and (iii) of Definition \ref{def:QAP}.
\end{lemma}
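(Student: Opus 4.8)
The plan is to replace each multiplier-valued $\xi_i$ by its pointwise right product with an approximate unit of $A$, and then to extract a genuine $A$-valued net by diagonalising over the relevant approximation data. So I would first fix an approximate unit $(e_j)_{j\in J}$ for $A$ consisting of positive contractions, and for each pair $(i,j)$ consider the finitely supported function $\xi_i e_j\colon g\mapsto \xi_i(g)e_j$, which now takes values in $A$. A direct computation, using that $\widetilde\alpha_g(\xi_i e_j)=(\widetilde\alpha_g\xi_i)\,\alpha_g(e_j)$ and that the sums defining the inner products are finite, yields the two identities
\[
\langle \xi_i e_j,\xi_i e_j\rangle = e_j\langle \xi_i,\xi_i\rangle e_j,\qquad
\langle \xi_i e_j,\widetilde\alpha_g(\xi_i e_j)\rangle = e_j\,\langle \xi_i,\widetilde\alpha_g\xi_i\rangle\,\alpha_g(e_j).
\]
The first identity gives $\langle \xi_i e_j,\xi_i e_j\rangle\le e_j^2\le 1$, so condition (i) of Definition~\ref{def:QAP} holds for every pair $(i,j)$ at no cost.

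For condition (ii) I would read the second identity as an iterated limit, splitting
\[
e_j\,\langle \xi_i,\widetilde\alpha_g\xi_i\rangle\,\alpha_g(e_j)-1
= e_j\big(\langle \xi_i,\widetilde\alpha_g\xi_i\rangle-1\big)\alpha_g(e_j) + \big(e_j\alpha_g(e_j)-1\big).
\]
Here the first summand, tested against any fixed $a\in A$ on either side, is dominated by $\|(\langle \xi_i,\widetilde\alpha_g\xi_i\rangle-1)(\alpha_g(e_j)a)\|$ or $\|(ae_j)(\langle \xi_i,\widetilde\alpha_g\xi_i\rangle-1)\|$, which tend to $0$ as $i\to\infty$ for fixed $j$ by the strict convergence $\langle \xi_i,\widetilde\alpha_g\xi_i\rangle\to 1$ of the original net (note $\alpha_g(e_j)a,\,ae_j\in A$ are fixed); and $e_j\alpha_g(e_j)\to 1$ strictly as $j\to\infty$, since both $(e_j)$ and $(\alpha_g(e_j))$ are approximate units for $A$ ($\alpha_g$ being a $*$-automorphism).

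For condition (iii) I would expand the commutator as a three-term sum,
\[
\xi_i e_j a - a\,\xi_i e_j
= \big(\xi_i(e_ja)-(e_ja)\xi_i\big) + (e_ja-ae_j)\xi_i + a\big(e_j\xi_i-\xi_i e_j\big),
\]
and bound each $\|\cdot\|_2$ separately. The outer two terms tend to $0$ as $i\to\infty$ for fixed $j$, by the quasi-centrality hypothesis (iii) of the original net applied to the fixed elements $e_ja\in A$ and $e_j\in A$. The middle term is controlled by $\|(e_ja-ae_j)\xi_i\|_2\le \|e_ja-ae_j\|$, which tends to $0$ as $j\to\infty$ because $(e_j)$ is an approximate unit, so $e_ja\to a$ and $ae_j\to a$ in norm.

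Finally, to assemble these iterated limits into a single net I would index by the directed set of triples $(F,\varepsilon,\Phi)$ with $F\subseteq G$ finite, $\Phi\subseteq A$ finite, and $\varepsilon>0$. Given such a triple, I would first choose $j$ large enough to make $\|(e_j\alpha_g(e_j)-1)a\|$, $\|a(e_j\alpha_g(e_j)-1)\|$, and $\|e_ja-ae_j\|$ small for all $g\in F$, $a\in\Phi$, and then, with this $j$ now fixed, choose $i$ large enough to control the remaining $i$-dependent terms above for all $g\in F$, $a\in\Phi$; the new net's entry is then $\xi_i e_j$. The one genuine subtlety is the bookkeeping: a single pair $(i,j)$ must make all three conditions hold at once, which is exactly why the order of quantifiers ($j$ first, $i$ second) matters — the $j$-controlled terms involve only the approximate unit, while the $i$-controlled terms involve the now-fixed element $e_j$, so the two choices do not interfere.
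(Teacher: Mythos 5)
Your proposal is correct and follows essentially the same route as the paper: there one also replaces $\xi_i$ by the pointwise products $\xi_i e_j$ with an approximate unit $(e_j)$ of $A$ and verifies the three conditions by the same computations, the only differences being that the paper indexes the new net by the product directed set $I\times J$ (invoking strict continuity of multiplication on bounded subsets of $\M(A)$ for condition (ii)) instead of diagonalising over triples as you do, and splits the commutator in (iii) into two terms rather than three. Your observation that a plain approximate unit suffices---since $\|e_j a - a e_j\|\le\|e_j a - a\|+\|a - a e_j\|\to 0$ automatically---is also fine; the paper requests a quasi-central approximate unit but in fact only uses this same property.
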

\begin{proof} Let $(\eta_i: G\to \M(A))_{i\in I}$ be a net as in Definition \ref{def:QAP} and let $(e_j)_{i\in J}$ be a quasi-central 
approximate unit of $A$, i.e., we have $\|e_ja-ae_j\|\to 0$ for all $a\in A$, and $0\leq e_j\leq 1$ for all $j\in J$. 
 Define $\xi_{i,j}:G\to A$ by $\xi_{i,j}(g):=\eta_i(g) e_j$. Then
$\langle \xi_{i,j},\xi_{i,j}\rangle=e_j\langle \eta_i, \eta_i\rangle e_j\leq 1$ for all $(i,j)\in I\times J$ and 
$$\langle \xi_{i,j}, \tilde\alpha_g\xi_{i,j}\rangle=e_j\langle \eta_i,\tilde\alpha_g\eta_i\rangle\alpha_g(e_j)\to 1$$
in the strict topology of $A$, using the fact that multiplication is strictly continuous on bounded subsets of $\M(A)$.
Thus  $(\xi_{i.j})_{(i.j)\in I\times J}$  satisfies conditions (i) and (ii) of Definition \ref{def:QAP}. To check condition (iii) 
let $a\in A$ be fixed. Then
\begin{align*}
\| \xi_{i,j} a-a\xi_{i,j}\|_2&=\|\eta_i e_j a- a\eta_i e_j\|_2\\
&\leq \|\eta_i e_j a-\eta_i a e_j\|_2+\|\eta_ia e_j- a\eta_i e_j\|_2\\
&\leq \|\eta_i\|_2 \cdot \|e_j a- a e_j\|+\|\eta_ia-a\eta_i\|_2\cdot \|e_j\|\\
&\leq  \|e_j a- a e_j\|+\|\eta_ia-a\eta_i\|_2\to 0.
\end{align*}
This finishes the proof.
\end{proof}

It follows from Lemma \ref{l2 amen} that if $A$ is strongly amenable, then it has the QAP.  On the other hand, it follows from Lemma \ref{lem-QAP} that the QAP implies the so-called \emph{approximation property} of Exel (see \cite[Definition 20.4]{Exel:2014rp}), and therefore that the QAP implies amenability by the results of \cite[Theorem 6.11 and Corollary 6.16]{Abadie:2019kc}.  To summarize, we have the following implications in general:
$$
\text{strong amenability} \;\; \Rightarrow\;\; \text{QAP} \;\; \Rightarrow\quad \text{amenability}.
$$
We will soon show that Suzuki's examples have the QAP 
(but are not strongly amenable), whence the first implication above is not reversible.  We do not know if the second is reversible.  The key point for showing Suzuki's examples have the QAP is as follows.

\begin{proposition}\label{pro:QAP-limit-1}
Assume  the $G$-algebra $A$ 
is the inductive limit of a sequence (or net) of $G$-algebras $(A_n)_{n\in N}$.
If all $A_n$ have the QAP, then so does $A$. In particular, if all $A_n$ are strongly amenable, then $A$ is amenable.
\end{proposition}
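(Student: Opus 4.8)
The plan is to manufacture a single QAP net for $A$ out of the QAP nets of the $A_n$, using density of the union of the $A_n$ inside $A$ to promote the commutation and strict-convergence conditions from each individual $A_n$ to all of $A$. First I would reduce to the situation where each $A_n$ is a $G$-subalgebra of $A$ with $\bigcup_n A_n$ dense. Writing $\phi_n\colon A_n\to A$ for the structure maps, the images $B_n:=\phi_n(A_n)$ form an upward directed family of $G$-subalgebras of $A$ with dense union, and the QAP passes from $A_n$ to the quotient $B_n$. The latter is routine: pushing a QAP net forward along the surjection $A_n\onto B_n$ preserves (i)--(iii), since a surjective $*$-homomorphism is contractive for $\|\cdot\|_2$, intertwines the module inner products (using its $G$-equivariance), and extends to a unital map $\M(A_n)\to\M(B_n)$ that is strictly continuous on bounded sets. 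Having done this, I may assume $A_n\subseteq A$ and suppress the $\phi_n$.

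Next I would index the net for $A$ by triples $i=(F,E,\eps)$, where $F\subseteq A$ and $E\subseteq G$ are finite and $\eps>0$, directed by declaring $i\le i'$ when $F\subseteq F'$, $E\subseteq E'$, and $\eps\ge\eps'$. Given such an $i$, set $\delta:=\eps/3$; by density and directedness choose $n$ so large that $A_n$ contains a $\delta$-approximant $a'$ for every $a\in F$, and then, invoking the QAP of $A_n$ with values in $A_n$ (Lemma \ref{lem-QAP}), choose a vector $\xi_i\colon G\to A_n\subseteq A$ from its QAP net with $\langle\xi_i,\xi_i\rangle\le 1$, with $\|(\langle\xi_i,\widetilde{\alpha}_g\xi_i\rangle-1)a'\|<\delta$ and $\|a'(\langle\xi_i,\widetilde{\alpha}_g\xi_i\rangle-1)\|<\delta$ for all $g\in E$ and all chosen approximants $a'$, and with $\|\xi_i a'-a'\xi_i\|_2<\delta$ for all such $a'$. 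This is possible because only finitely many conditions are imposed.

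To verify the QAP for $(\xi_i)$, condition (i) holds by construction. For (ii) and (iii) I would absorb the approximation errors using the Hilbert-module estimates $\|\zeta b\|_2\le\|b\|\,\|\zeta\|_2$ and $\|b\zeta\|_2\le\|b\|\,\|\zeta\|_2$, together with $\|\langle\xi_i,\widetilde{\alpha}_g\xi_i\rangle\|\le\|\xi_i\|_2^2\le 1$ (Cauchy--Schwarz, as $\widetilde{\alpha}_g$ preserves $\|\cdot\|_2$), which gives $\|\langle\xi_i,\widetilde{\alpha}_g\xi_i\rangle-1\|\le 2$. Thus for $a\in F$ with approximant $a'$,
\[
\|(\langle\xi_i,\widetilde{\alpha}_g\xi_i\rangle-1)a\|\le\|(\langle\xi_i,\widetilde{\alpha}_g\xi_i\rangle-1)a'\|+2\delta<3\delta=\eps,
\]
and symmetrically on the other side, while
\[
\|\xi_i a-a\xi_i\|_2\le\|\xi_i(a-a')\|_2+\|\xi_i a'-a'\xi_i\|_2+\|(a-a')\xi_i\|_2<\delta+\delta+\delta=\eps .
\]
Here I use that for $a'\in A_n\subseteq A$ the unit $1=1_{\M(A)}$ agrees with $1_{\M(A_n)}$ on $a'$, so the bounds from the QAP of $A_n$ apply verbatim in $A$. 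Cofinality of $i$ in the directed set then yields (ii) strictly and (iii) in the limit, so $A$ has the QAP. The final assertion is then immediate: strong amenability implies the QAP and the QAP implies amenability (both recorded above), so if every $A_n$ is strongly amenable, each has the QAP, whence $A$ has the QAP and is amenable.

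The genuinely delicate point is the bookkeeping around the strict topology and multiplier algebras when the structure maps $\phi_n$ are not injective: one must check that the QAP really descends to the images $B_n$ and that strict convergence to $1_{\M(A_n)}$ transfers to the correct unit of $\M(A)$. Once the reduction to $G$-subalgebras is in place, the remaining estimates are routine applications of the norm inequalities above. (For the stated application to Suzuki's examples the $A_n$ are already subalgebras, so this subtlety does not arise there.)
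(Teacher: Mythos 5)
Your proposal is correct and follows essentially the same route as the paper: reduce to an increasing family of $G$-subalgebras with dense union (using that the QAP passes to quotients by $G$-invariant ideals), take QAP vectors valued in the $A_n$ via Lemma \ref{lem-QAP}, and promote conditions (ii) and (iii) from the dense union to all of $A$. Your explicit $(F,E,\eps)$-indexed diagonal net with the $\eps/3$-estimates is just a careful unwinding of the paper's shorter appeal to density together with uniform boundedness of the vectors in the $\ell^2$-norm.
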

\begin{proof} Since the QAP passes to quotients by $G$-invariant ideals, we may assume 
without loss of generality that $(A_n)_{n\in N}$ is an increasing net of $G$-algebras such that $\cup_{n\in N}A_n$ is dense in $A$.
For each $n$ let  $(\xi_{i,n})$ be a net of functions $\xi_{i,n}:G\to A_n$ satisfying the conditions of 
Definition \ref{def:QAP} for the QAP of $A_n$.  Let $\eta_{i,n}:G\to A$ denote the composition of $\xi_{i,n}$ with the 
inclusion $A_n\into A$. 
It is clear that the net $(\eta_{i,n})$ satisfies condition (i) in the definition of the QAP.
Moreover,  conditions (ii) and (iii) for $(\xi_{i,n})$ imply that 
$$a\langle \eta_{i,n},\tilde\alpha_g\eta_{i,n}\rangle\to a,\; \langle \eta_{i,n},\tilde\alpha_g\eta_{i,n}\rangle a\to a \quad \text{and}\quad
\|\eta_{i,n}a - a\eta_{i,n}\|_2\to 0$$
for all $a\in A_n$, and hence for all $a\in \cup_{n\in N} A_n$. 
Since $(\eta_{i,n})$ is uniformly bounded (by $1$) with respect to the $\ell^2$-norm, it follows that (ii) and (iii) hold for all $a\in A$.
\end{proof}

Now, let us briefly describe Suzuki's examples as given in \cite{Suzuki:2018qo}*{Proposition B}  in order to see how they fit into the above discussion.  Let $G$ be any countable exact group.  Then it is always possible to choose a second countable, compact, amenable $G$-space $X$ (i.e.\ the $G$-algebra $C(X)$ is strongly amenable) such that the $G$-action is also free and minimal (see for example \cite[Section 6]{Rordam:2010kx}).  The crossed product $A_0:= \cont(X)\rtimes G$ is therefore a simple, separable, unital and nuclear \cstar{}algebra.  Consider $A_0$ as a $G$-algebra endowed with the conjugation action, that is, the (inner) action implemented by the canonical unitaries $u_g\in C^*_r(G)\subseteq  A_0$. Observe that an inner action can never be amenable unless $G$ is amenable (or the algebra is zero). However, the diagonal $G$-action on the infinite tensor product $A:= A_0^{\otimes\N}$ has the QAP (and is therefore amenable).

Indeed, as observed by Suzuki, it is enough to realise $A$ as the direct limit of the $G$-subalgebras $A_n:= A_0^{\otimes n}\otimes\cont(X)$, which are $X\rtimes G$-algebras in the obvious way. It follows that all $A_n$ are strongly amenable, hence $A$ has the QAP
by Proposition~\ref{pro:QAP-limit-1}.

Note that Suzuki's examples also show that (unlike the QAP), strong amenability does not behave well with respect to limits.
It is not clear to us whether amenability passes to inductive limits of $G$-algebras,
but Proposition \ref{pro:QAP-limit-1}  at least shows that limits 
of strongly amenable $G$-algebras are amenable.

We close this section with a brief discussion on the amenability of some other examples of $G$-algebras $A$ 
with $A\rtimes G=A\rtimes_rG$ 
constructed by Suzuki in \cite{Suzuki:2018qo}. 
In  \cite{Suzuki:2018qo}*{Theorem A} he produces examples of actions $\alpha:G\to \Aut(A)$ of non-amenable second countable 
locally compact groups on simple 
$C^*$-algebras $A$ such that the full and reduced crossed products coincide. Since $A$ is simple, we have $Z\M(A)\cong C_b(\Prim(A))=\C$,
hence, since $G$ is not amenable,  $A$ cannot be a strongly amenable $G$-algebra. 

To see that $A$ is  amenable if $G$ is discrete,  recall that 
Suzuki constructs $A$ as a crossed product $A=C_0(X\times G)\rtimes \Gamma$
with respect to a certain minimal diagonal action, say $\beta$, of a free group $\Gamma$ such that  $X$ is a compact  amenable $\Gamma$-space
and $\beta$ commutes with the $G$-action $\tilde\rho:=\id_X\otimes \rho$ on $C_0(X\times G)$, where $\rho$ denotes the right translation action of $G$ on itself. 
Then the action $\beta$ on $C_0(X\times G)$ is amenable and therefore
 $A=C_0(X\times G)\rtimes \Gamma$ is nuclear by 
  \cite[Th\'{e}or\`{e}me 4.5]{Anantharaman-Delaroche:1987os}. Now, since the $G$-action $\tilde\rho$ commutes with $\beta$, it
  induces an action, say $\gamma$,  of $G$ on $A=C_0(X\times G)\rtimes \Gamma$, which is the action considered by Suzuki.
  To see that this action is amenable, by   \cite[Th\'{e}or\`{e}me 4.5]{Anantharaman-Delaroche:1987os} it suffices to show that $A\rtimes G$ is nuclear. But this follows from the equation 
\begin{align*}
A\rtimes_\gamma G&=\big(C_0(X\times G)\rtimes_\beta \Gamma\big)\rtimes_\gamma G=\big(C_0(X\times G)\rtimes_{\tilde\rho} G\big)\rtimes  \Gamma\\
&\cong \big(C(X)\otimes \K(\ell^2(G))\big)\rtimes\Gamma,
\end{align*}
 which is nuclear by the amenability of the action of $\Gamma$ on $C(X)\otimes \K(\ell^2(G))$, which follows from amenability of the $\Gamma$-space $X$.

A similar argument shows that Suzuki's examples of \cite{Suzuki:2018qo}*{Proposition C} are also amenable if  $G$ is  discrete (but not strongly amenable). We believe that all these examples should also have the QAP, but so far we did not succeed to give a proof.

\section{Weak containment}\label{weak con sec}

In this section, motivated in part by Suzuki's examples from \cite{Suzuki:2018qo}, and partly by issues that came up in our earlier work on exotic crossed products \cite{Buss:2018nm}, we study the question of characterizing when $A\rtimes_{\max}G$ is equal to $A\rtimes_rG$.  If $A$ satisfies this property, one sometimes says that $A$ has \emph{weak containment}, whence the title of this section.  

This question seems difficult in general: while amenability of the action is a sufficient condition by \cite[Proposition 4.8]{Anantharaman-Delaroche:1987os}, finding a `good' necessary condition that works in complete generality has proven elusive, even in the case when $A$ is commutative.

It turns out to be easier to characterize when $A\rtimes_{\max} G$ equals the so-called \emph{maximal injective} crossed product $A\rtimes_{\inj}G$, which was introduced by the current authors in \cite{Buss:2018nm}.  As proved in \cite[Proposition 4.2]{Buss:2018nm}, $A\rtimes_{\inj}G=A\rtimes_\red G$ whenever $G$ is exact, so characterizing when $A\rtimes_{\max}G=A\rtimes_{\inj}G$ is the same as characterizing when $A\rtimes_{\max}G=A\rtimes_r G$ for `most' groups that come up in `real life'.

We first recall the definition of the maximal injective crossed product from \cite[Section 3]{Buss:2018nm}.

\begin{definition}\label{inj cp def}
For a $G$-$C^*$-algebra $A$, the \emph{injective crossed product} $A\rtimes_\inj G$ is defined as the completion of $C_c(G,A)$ for the norm defined on $a\in C_c(G,A)$ by
$$
\|a\|:=\inf\{\|a\circ \phi\|_{B\rtimes_{\max}G}\mid \phi:A\to B \text{ an injective equivariant $*$-homomorphism}\}.
$$
\end{definition}

It is not immediate from the definition, but this is a \cstar{}norm.  Moreover, it defines a crossed product functor that takes injective equivariant $*$-homomorphisms to injective $*$-homomorphisms.   The following definitions are important for establishing the basic properties of $\rtimes_{\inj}$, and will be fundamental to our work in this paper.

\begin{definition}\label{inj def}
A $G$-algebra $A$ is \emph{$G$-injective} if for any $G$-embedding $A\subseteq B$, there exists an equivariant conditional expectation $P:B\to A$.

A $G$-algebra $A$ has the \emph{$G$-WEP} if for any $G$-embedding $\iota:A\into B$, there exists a ccp map $P:B\onto A^{**}$ such that $P\circ \iota:A\to A^{**}$ coincides with the canonical embedding $A\into A^{**}$.
\end{definition}

The above definition of $G$-injectivity is maybe a little non-standard.  We will see in Section \ref{hamana sec} below (see Proposition \ref{hamana vs inj}) that it is equivalent to the more usual definition due to Hamana \cite{Hamana:1985aa}.

We have the following basic lemma: this will get used several times below.

\begin{lemma}\label{uni rem}
If $B$ is an injective $G$-algebra in the sense of Definition \ref{inj def}, then it is unital.  
\end{lemma}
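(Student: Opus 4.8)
The plan is to exhibit $B$ as a $G$-subalgebra of a \emph{unital} $G$-algebra and then use $G$-injectivity to transport that unit back into $B$. The natural ambient algebra is the multiplier algebra $\M(B)$, which carries the canonical (strictly continuous) extension of the $G$-action and thus is itself a $G$-algebra. The canonical inclusion $\iota\colon B\into \M(B)$ is an injective equivariant $*$-homomorphism, hence a $G$-embedding in the sense of Definition \ref{inj def}. (If $B$ is already unital there is nothing to prove, and otherwise $\M(B)$ properly contains the unit $1$.)

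Applying $G$-injectivity of $B$ to this embedding yields an equivariant conditional expectation $P\colon \M(B)\to B$. By Tomiyama's theorem, $P$ is automatically a $B$-bimodule map, so $P(b_1 m b_2)=b_1 P(m) b_2$ for all $b_1,b_2\in B$ and $m\in\M(B)$, and $P$ restricts to the identity on $B$. Set $e:=P(1)\in B$. Then for every $b\in B$ we have
$$
be = b\,P(1) = P(b\cdot 1) = P(b) = b,
$$
using left $B$-linearity of $P$ together with $b\cdot 1 = b$ in $\M(B)$ and $P|_B=\id_B$; the symmetric computation with right $B$-linearity gives $eb=b$. Hence $e$ is a two-sided unit for $B$, so $B$ is unital.

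There is essentially no serious obstacle here; the only points requiring care are routine. First, one must confirm that $\M(B)$ genuinely is a $G$-algebra and that $\iota$ is an honest $G$-embedding, which is standard since $*$-automorphisms extend uniquely and strictly continuously to multipliers. Second, one invokes Tomiyama's theorem to know that the conditional expectation is bimodular, which is precisely what makes the one-line computation above go through. The same argument works verbatim with $\M(B)$ replaced by the forced unitization $B^{+}=B\oplus\C$ (with $G$ acting trivially on the scalar summand) or by the bidual $B^{**}$, so the choice of unital ambient algebra is merely a matter of taste.
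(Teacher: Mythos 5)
Your proof is correct and follows essentially the same route as the paper: embed $B$ into a unital $G$-algebra, use $G$-injectivity to obtain an equivariant ccp splitting, invoke Tomiyama's theorem for bimodularity, and check that the image of the unit is a unit for $B$. The paper uses the unitization $\widetilde{B}$ where you use $\M(B)$, but as you yourself note, the argument is identical in either ambient algebra.
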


\begin{proof}
Let $\widetilde{B}$ be the unitisation of $B$, equipped with the unique extension of the $G$-action.  Then the natural inclusion $B\into \widetilde{B}$ admits an equivariant ccp splitting $E:\widetilde{B}\to B$, which is necessarily a conditional expectation (see for example \cite[Theorem 1.5.10]{Brown:2008qy}).  Then for any $b\in B$, 
$$
E(1)b=E(1b)=E(b)=b=E(b)=E(b1)=bE(1),
$$
so $E(1)$ is a unit for $B$.  
\end{proof}

\begin{remark}\label{inj to wep}
A $G$-injective $G$-algebra clearly has the $G$-WEP.  Moreover, if $A^{**}$ is $G$-injective, then $A$ has the $G$-WEP.  To see this, let $A\into B$ be a $G$-embedding, and extend it canonically to an embedding of double duals $A^{**}\into B^{**}$.  As $A^{**}$ is $G$-injective, this admits a splitting $P:B^{**}\to A^{**}$, and the restriction of this splitting to $B$ is the map required by the $G$-WEP.  The converse is false in general: indeed, $A=\ell^\infty(G)$ is $G$-injective by Example \ref{basic exes} just below, so always has the $G$-WEP; however, $A^{**}$ is $G$-injective if and only if $G$ is exact as will follow from Theorem \ref{ozawa the} and Theorem \ref{g inj vs inj} below.
\end{remark}

\begin{example}\label{basic exes}
Perhaps the simplest example of a $G$-injective algebra is $\ell^\infty(G)$.  Indeed, let $\ell^\infty(G)\into B$ be any $G$-embedding.  Choose a state $\phi$ on $B$ that extends the Dirac mass at the identity $\delta_e$, considered as a multiplicative linear functional $\delta_e:\ell^\infty(G)\to \C$.  Then define 
$$
P:B\to\ell^\infty(G),\quad P(b):g\mapsto \phi(\beta_{g^{-1}}(b)).
$$
It is not too difficult to see that this is a ccp $G$-map splitting the original inclusion (compare \cite[Proposition 2.2]{Buss:2018nm} for a more general result).   

Noting that $C_0(G)^{**}=\ell^\infty(G)$, Remark \ref{inj to wep} gives that $C_0(G)$ has the $G$-WEP.  It is never $G$-injective for infinite $G$, as $G$-injectivity implies unitality by Lemma \ref{uni rem}.  
\end{example}

More generally (and with essentially the same proof: see \cite[Proposition 2.2 and Remark 2.3]{Buss:2018nm}) we have the following example.

\begin{example}\label{inj ex}
Let $B$ be a $C^*$-algebra that is injective in the usual sense (i.e.\ $G$-injective where $G$ is the trivial group).  Then $\ell^\infty(G,B)$ equipped with the translation action defined by 
$$
(\gamma_g(f))(h):=f(g^{-1}h)
$$ 
is $G$-injective.  More generally, if $B$ is equipped with a $G$-action $\beta$ (but is still only assumed non-equivariantly injective) and $\ell^\infty(G,B)$ is equipped with the diagonal type action 
$$
(\widetilde{\beta}_g f)(h):=\beta_g(f(g^{-1}h)),
$$
then $\ell^\infty(G,B)$ is $G$-injective.
\end{example}

The following lemma, proved in \cite[Proposition 3.12]{Buss:2018nm}, is key to establishing these properties.

\begin{lemma}\label{g wep lem}
Let $A$ be a $G$-algebra with the $G$-WEP (in particular, $A$ could be $G$-injective).  Then $A\rtimes_{\max}G=A\rtimes_{\inj} G$.  \qed
\end{lemma}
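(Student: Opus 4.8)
The plan is to prove that the two $C^*$-norms on $C_c(G,A)$ defining $A\rtimes_{\max}G$ and $A\rtimes_{\inj}G$ agree; as both algebras are completions of $C_c(G,A)$, this gives the claimed equality. Write $\|\cdot\|_{\max}$ and $\|\cdot\|_{\inj}$ for these two norms. One inequality is immediate: taking $\phi=\id_A$ as an injective equivariant $*$-homomorphism in Definition \ref{inj cp def} gives $\|a\|_{\inj}\le\|a\|_{\max}$ for all $a\in C_c(G,A)$, so the real content is the reverse inequality $\|a\|_{\max}\le\|a\|_{\inj}$.

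First I would record an auxiliary observation: the canonical $G$-embedding $\iota\colon A\into A^{**}$ yields $\|a\|_{\max}\le\|\iota(a)\|_{A^{**}\rtimes_{\max}G}$. To see this, take any nondegenerate covariant representation $(\pi,u)$ of $(A,G)$, extend $\pi$ to a normal representation $\pi^{**}\colon A^{**}\to B(H)$, and note that $(\pi^{**},u)$ is again covariant: for each $g$ the two ultraweakly continuous maps $x\mapsto u_g\pi^{**}(x)u_g^*$ and $x\mapsto\pi^{**}(\alpha_g(x))$ agree on the ultraweakly dense subalgebra $A$, hence everywhere. Since $\pi^{**}\circ\iota=\pi$, the integrated forms satisfy $(\pi^{**}\rtimes u)(\iota(a))=(\pi\rtimes u)(a)$; taking the supremum over all $(\pi,u)$ yields the desired inequality.

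Then I would exploit the $G$-WEP. Fix $a\in C_c(G,A)$ and $\epsilon>0$, and use that $\|a\|_{\inj}$ is an infimum to choose an injective equivariant $*$-homomorphism $\phi\colon A\into B$ with $\|\phi(a)\|_{B\rtimes_{\max}G}\le\|a\|_{\inj}+\epsilon$. Applying the $G$-WEP (Definition \ref{inj def}) to the $G$-embedding $\phi$ produces an equivariant ccp map $P\colon B\to A^{**}$ with $P\circ\phi=\iota$. The key structural input is that the maximal crossed product is functorial for equivariant ccp maps, so that the pointwise application of $P$ extends to a contractive map $P\rtimes G\colon B\rtimes_{\max}G\to A^{**}\rtimes_{\max}G$. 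Since $(P\rtimes G)(\phi(a))=\iota(a)$, combining with the previous paragraph gives
$$\|a\|_{\max}\le\|\iota(a)\|_{A^{**}\rtimes_{\max}G}=\|(P\rtimes G)(\phi(a))\|_{A^{**}\rtimes_{\max}G}\le\|\phi(a)\|_{B\rtimes_{\max}G}\le\|a\|_{\inj}+\epsilon.$$
Letting $\epsilon\to0$ gives $\|a\|_{\max}\le\|a\|_{\inj}$, completing the argument.

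The hard part will be the functoriality of $\rtimes_{\max}$ under equivariant ccp maps: since $P$ is merely ccp and not a $*$-homomorphism, one cannot invoke the universal property of the full crossed product directly, and instead must dilate $P$ (Stinespring) or cite the relevant statement from \cite{Buss:2018nm}. The double-dual extension is routine, but it is the crucial bridge that converts the $G$-WEP — which only lands in $A^{**}$ rather than in $A$ — back into a statement about $A\rtimes_{\max}G$.
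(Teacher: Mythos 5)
Your proof is correct. There is, however, no in-paper argument to compare it against: the paper does not prove Lemma \ref{g wep lem} but quotes it from \cite[Proposition 3.12]{Buss:2018nm} (hence the \qed\ with no proof environment). Your route --- (a) the canonical map $A\rtimes_{\max}G\to A^{**}\rtimes_{\max}G$ is isometric, shown by extending nondegenerate covariant representations of $(A,G)$ normally to $(A^{**},G)$, and (b) the equivariant ccp map $P\colon B\to A^{**}$ supplied by the $G$-WEP descends to a contraction $B\rtimes_{\max}G\to A^{**}\rtimes_{\max}G$ sending $\phi(a)$ to $\iota(a)$ --- is the standard argument for this statement and is essentially how the cited proposition is established. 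Moreover, the single ingredient you flag as requiring an external citation or a Stinespring dilation, namely ccp functoriality of $\rtimes_{\max}$, can be closed entirely with the paper's own Lemma \ref{int lem}: choose a faithful nondegenerate representation of $A^{**}\rtimes_{\max}G$, which is the integrated form $\sigma\rtimes v$ of a covariant pair; then $\sigma\circ P\colon B\to \Bd(H)$ is a ccp map covariant for $v$, so by Lemma \ref{int lem} its integrated form $(\sigma\circ P)\rtimes v$ is a ccp map on $B\rtimes_{\max}G$, and on $C_c(G,B)$ it agrees with $(\sigma\rtimes v)$ applied to the pointwise image under $P$; since $\sigma\rtimes v$ is faithful, this yields the desired contraction $P\rtimes G\colon B\rtimes_{\max}G\to A^{**}\rtimes_{\max}G$. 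One last small point worth making explicit: in step (a) you take a supremum over nondegenerate covariant pairs, so you should note (or recall) that these suffice to compute the maximal norm, which is standard since the integrated form of any covariant pair vanishes on the null space of $\pi(A)$.
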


The following lemma is well-known.  The proof is closely related to the proof of \cite[Theorem 4.9, (5) $\Rightarrow$ (6)]{Buss:2014aa}.

\begin{lemma}\label{int lem}
Let $A$ be a $G$-algebra, and $(\sigma,u):(A,G)\to \Bd(H)$ be a pair consisting of a ccp map $\sigma$ and a unitary representation $u$ satisfying the usual covariance relation 
$$
\sigma(\alpha_g(a))=u_g\sigma(a)u_g^*
$$
for all $a\in A$ and $g\in G$. Then the integrated form 
$$
\sigma\rtimes u :C_c(G,A)\to \Bd(H),\quad f\mapsto \sum_{g\in G} \sigma(f(g))u_g
$$
extends to a ccp map $\sigma\rtimes u:A\rtimes_{\max} G \to\Bd(H)$.
\end{lemma}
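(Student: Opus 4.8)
The plan is to realise $\sigma\rtimes u$ as the compression of a genuine $*$\nobreakdash-representation of $A\rtimes_{\max}G$ by a single contraction, which immediately exhibits it as a composition of ccp maps and hence ccp. The input is Stinespring's dilation theorem together with the sharp uniqueness statement for \emph{minimal} dilations.

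First I would apply Stinespring to the ccp map $\sigma\colon A\to\Bd(H)$ to obtain a Hilbert space $\widehat H$, a $*$\nobreakdash-representation $\pi\colon A\to\Bd(\widehat H)$ and a contraction $V\colon H\to\widehat H$ with $\sigma(a)=V^*\pi(a)V$, chosen \emph{minimal} in the sense that $\widehat H$ is the closed linear span of $\pi(A)VH$. The heart of the argument is to dilate the covariant pair $(\sigma,u)$ to a covariant pair $(\pi,\widehat u)$ on $\widehat H$. Fix $g\in G$. Rewriting the covariance relation as
$$
\sigma(\alpha_g(a))=u_g V^*\pi(a)V u_g^*=(Vu_g^*)^*\pi(a)(Vu_g^*)
$$
shows that $(\pi,Vu_g^*)$ is a Stinespring dilation of $a\mapsto\sigma(\alpha_g(a))$, while $(\pi\circ\alpha_g,V)$ is manifestly another one. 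Since $\alpha_g$ is an automorphism and $u_g$ is unitary, both dilations are again minimal, so the uniqueness of minimal Stinespring dilations supplies a unitary $\widehat u_g\colon\widehat H\to\widehat H$ satisfying $\pi(\alpha_g(a))=\widehat u_g\pi(a)\widehat u_g^*$ and $\widehat u_gV=Vu_g$.

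Next I would check that $g\mapsto\widehat u_g$ is a unitary representation. These two conditions determine $\widehat u_g$ uniquely, as they fix it on the dense subspace $\pi(A)VH$, and a direct computation shows that $\widehat u_g\widehat u_h$ satisfies the same two conditions with $gh$ in place of $g$, forcing $\widehat u_{gh}=\widehat u_g\widehat u_h$; continuity is vacuous as $G$ is discrete. Thus $(\pi,\widehat u)$ is a covariant representation of $(A,G)$, and by the universal property of the maximal crossed product it integrates to a $*$\nobreakdash-homomorphism $\pi\rtimes\widehat u\colon A\rtimes_{\max}G\to\Bd(\widehat H)$, which is in particular completely positive and contractive.

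Finally, using $\widehat u_gV=Vu_g$ and $V^*\pi(a)V=\sigma(a)$, for $f\in C_c(G,A)$ one computes
$$
V^*(\pi\rtimes\widehat u)(f)V=\sum_{g\in G}V^*\pi(f(g))\widehat u_gV=\sum_{g\in G}\sigma(f(g))u_g=(\sigma\rtimes u)(f).
$$
Hence $\sigma\rtimes u$ is the composition of the $*$\nobreakdash-homomorphism $\pi\rtimes\widehat u$ with the compression $X\mapsto V^*XV$; the latter is ccp since $V$ is a contraction, and a composition of ccp maps is ccp, so $\sigma\rtimes u$ extends to a ccp map on $A\rtimes_{\max}G$. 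I expect the main obstacle to be the construction and verification of the dilated representation $\widehat u$: one must confirm that all the relevant Stinespring dilations are genuinely minimal (so that uniqueness applies) and that the resulting intertwiners assemble into an honest group homomorphism rather than merely a projective one, both of which hinge on the uniqueness of minimal dilations.
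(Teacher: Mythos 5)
Your proof is correct, and it follows the same overall strategy as the paper's: dilate the covariant ccp pair $(\sigma,u)$ to a covariant $*$\nobreakdash-representation $(\pi,\widehat u)$ on a Stinespring space, integrate via the universal property of $A\rtimes_{\max}G$, and compress by the contraction $V$. The difference is purely in how the dilated unitaries are obtained. The paper builds the dilation by hand: it first unitizes $A$ and $\sigma$, forms the semi-inner product $\langle a\otimes\xi,b\otimes\eta\rangle=\langle\xi,\sigma(a^*b)\eta\rangle$ on $A\odot H$, and defines $v_g$ by the explicit formula $a\otimes\xi\mapsto\alpha_g(a)\otimes u_g\xi$, checking directly from covariance that this is well defined and unitary. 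You instead treat Stinespring as a black box and conjure $\widehat u_g$ from the uniqueness theorem for minimal dilations applied to $(\pi\circ\alpha_g,V)$ and $(\pi,Vu_g^*)$, with multiplicativity again coming from uniqueness (and $\widehat u_e=1$ is then automatic, since $\widehat u_e$ is an idempotent unitary). In fact the two constructions yield literally the same unitary: your relations force $\widehat u_g\pi(a)V\xi=\pi(\alpha_g(a))Vu_g\xi$, which in the explicit model is exactly the paper's formula for $v_g$. What your route buys is brevity and modularity; what it requires is some care in the non-unital setting, since the textbook versions of Stinespring's theorem and its uniqueness clause are usually stated for ucp maps on unital algebras. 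The facts you need do hold: minimality of $\pi$ forces nondegeneracy (because $\overline{\operatorname{span}}\,\pi(A)\pi(A)VH=\overline{\operatorname{span}}\,\pi(A)VH$), and nondegeneracy is what guarantees, via an approximate unit, that the intertwiner between two minimal dilations also matches up the $V$'s; alternatively, you could unitize $A$ and $\sigma$ at the outset, as the paper does, and quote only the unital statements.
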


\begin{proof}
Replacing $A$ with its unitization and $\sigma$ with the canonical ucp extension to the unitization (\cite[Proposition 2.2.1]{Brown:2008qy}) we may assume that $A$ and $\sigma$ are unital.  Equip the algebraic tensor product $A\odot H$ with the inner product defined on elementary tensors by 
$$
\langle a\otimes \xi,b\otimes \eta\rangle:=\langle \xi,\sigma(a^*b)\eta\rangle.
$$
As in proofs of the usual Stinespring construction (see for example \cite[Proposition 1.5.1]{Brown:2008qy}), the fact that $\sigma$ is completely positive implies that this inner product is positive semi-definite, so we may take the separated completion to get a Hilbert space $H'$.  Let $\alpha$ denote the action of $G$ on $A$, let $g\in G$, and provisionally define a map $v_g:H'\to H'$ by the formula
$$
v_g:a\otimes \xi\mapsto \alpha_g(a)\otimes u_g\xi.
$$
Equivariance of $\phi$ implies that this preserves the inner product defined above, so each $v_g$ is well-defined and unitary.  It is then straightforward to check that $v$ defines a unitary representation $v$ of $G$ on $H'$.  Moreover, as in the usual Stinespring construction, for $a\in A$ the map $\widetilde{\sigma}(a)$ defined on elementary tensors by 
$$
\widetilde{\sigma}(a):b\otimes \xi\mapsto ab\otimes \xi
$$
gives a well-defined bounded operator on $H'$, and this defines a representation $\widetilde{\sigma}:A\to \Bd(H')$, which is covariant for the representation $v$ of $G$.  Again analogously to the usual Stinespring construction, 
$$
V:\xi\to 1_A\otimes \xi
$$
defines an equivariant isometry $V:H\to H'$ such that $V^*\widetilde{\sigma}(a)V=\sigma(a)$ for all $a\in A$.  One can now check that if 
$$
\widetilde{\sigma}\rtimes v:A\rtimes_{\max} G \to \Bd(H') 
$$
is the integrated form of the pair $(\widetilde{\sigma},v)$, then the map defined by
\begin{equation}\label{ccp ext}
A\rtimes_{\max} G \to \Bd(H),\quad a\mapsto V^*(\widetilde{\sigma}\rtimes v(a))V
\end{equation}
is an extension of the map 
$$
\sigma\rtimes u :C_c(G,A)\to \Bd(H),\quad f\mapsto \sum_G \sigma(f(g))u_g
$$
from the statement.  As the map defined in line \eqref{ccp ext} is clearly ccp, we are done. 
\end{proof}

The statement and proof of the following result are inspired by Lance's \emph{tensor product trick}: see for example the exposition in \cite[Proposition 3.6.6]{Brown:2008qy}, or the original article \cite{Lance:1973aa}.

\begin{theorem}\label{inj=max}
Let $\iota\colon A\into B$ be a faithful $G$-embedding. The following are equivalent:
\begin{enumerate}[(i)]
\item $\iota\rtimes_\max G\colon A\rtimes_{\max}G\to B\rtimes_{\max}G$ is injective;
\item for any covariant representation $(\pi,u)\colon (A,G)\to \Bd(H)$, there is a ccp $G$-map $\varphi\colon B\to \Bd(H)$ with $\varphi\circ\iota=\pi$;
\item there exists a covariant representation $(\pi,u):(A,G)\to \Bd(H)$ such that the integrated form $\pi\rtimes u:A\rtimes_{\max}G \to\Bd(H)$ is faithful and for which there is a ccp $G$-map $\varphi\colon B\to \Bd(H)$ with $\varphi\circ\iota=\pi$.
\end{enumerate}
\end{theorem}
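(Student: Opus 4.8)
The plan is to prove the cyclic chain (i) $\Rightarrow$ (ii) $\Rightarrow$ (iii) $\Rightarrow$ (i), where the two implications not starting from (i) are quick and the real content is (i) $\Rightarrow$ (ii). For (iii) $\Rightarrow$ (i): given the data in (iii), the equivariance of $\varphi$ means exactly that $\varphi(\beta_g(b))=u_g\varphi(b)u_g^*$ (where $\Bd(H)$ carries the action $\Ad u$), so $(\varphi,u)$ is a covariant ccp pair for $(B,G)$. Lemma \ref{int lem} then yields a ccp map $\varphi\rtimes u\colon B\rtimes_{\max}G\to\Bd(H)$, and evaluating on $C_c(G,A)$ shows $(\varphi\rtimes u)\circ(\iota\rtimes_{\max}G)=\pi\rtimes u$; as the latter is faithful, $\iota\rtimes_{\max}G$ is injective. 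For (ii) $\Rightarrow$ (iii): pick any faithful nondegenerate representation of $A\rtimes_{\max}G$; by the standard correspondence between nondegenerate representations of the crossed product and covariant representations of $(A,G)$ it has the form $\pi\rtimes u$ with $\pi\rtimes u$ faithful, and (ii) supplies the $\varphi$ needed for (iii).

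The heart is (i) $\Rightarrow$ (ii). Let $(\pi,u)$ be a covariant representation on $H$. First I would reduce to $\pi$ nondegenerate: replacing $H$ by $H_0:=\overline{\pi(A)H}$, which reduces $u$ since $(\pi,u)$ is covariant, and taking $\varphi$ to be $0$ on $H_0^\perp$, reduces the general case to the nondegenerate one. Assuming $\pi$ nondegenerate, the integrated form $\pi\rtimes u\colon A\rtimes_{\max}G\to\Bd(H)$ is a nondegenerate representation. Using (i) to view $A\rtimes_{\max}G$ as a $C^*$-subalgebra of $B\rtimes_{\max}G$, Arveson's extension theorem produces a ccp map $\Phi\colon B\rtimes_{\max}G\to\Bd(H)$ with $\Phi|_{A\rtimes_{\max}G}=\pi\rtimes u$. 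Because $\Phi$ is a $*$-homomorphism on $A\rtimes_{\max}G$, that subalgebra lies in the multiplicative domain of $\Phi$; together with nondegeneracy of $\pi\rtimes u$ and $f_\mu c\to c$ for an approximate unit $(f_\mu)$ of $B\rtimes_{\max}G$ and $c\in A\rtimes_{\max}G$, a short estimate shows $\Phi$ is itself nondegenerate.

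Now I would disintegrate $\Phi$ back to an equivariant map on $B$. Take a minimal Stinespring dilation $\Phi(x)=V^*\rho(x)V$ with $\rho\colon B\rtimes_{\max}G\to\Bd(K)$ a nondegenerate representation and $V\colon H\to K$ a contraction; nondegeneracy of $\Phi$ forces $V^*V=1$, so $V$ is isometric. The representation $\rho$ disintegrates as $\rho=\rho_B\rtimes w$ for a covariant pair $(\rho_B,w)$ of $(B,G)$, where $\rho_B=\bar\rho\circ i_B$ and $w_g=\bar\rho(u_g^B)$ via the multiplier extension $\bar\rho$ of $\rho$. Define $\varphi(b):=V^*\rho_B(b)V$, which is ccp. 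Since $A\rtimes_{\max}G$ sits in the multiplicative domain of $\Phi$, one gets $\rho((\iota\rtimes_{\max}G)(c))V=V(\pi\rtimes u)(c)$ for all $c\in A\rtimes_{\max}G$; feeding in $c=i_A(a)c'$ and $c=u_g^A c'$ and using equivariance of $\iota$ (so that $(\iota\rtimes_{\max}G)(u_g^A c')=u_g^B(\iota\rtimes_{\max}G)(c')$ and similarly for $i_A(a)$) yields the intertwining relations $\rho_B(\iota(a))V=V\pi(a)$ and $w_gV=Vu_g$ after cancelling the dense range $(\pi\rtimes u)(A\rtimes_{\max}G)H$. These, with $V^*V=1$ and $\rho_B(\beta_g(b))=w_g\rho_B(b)w_g^*$, give $\varphi\circ\iota=\pi$ and $\varphi(\beta_g(b))=u_g\varphi(b)u_g^*$, completing (i) $\Rightarrow$ (ii).

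The main obstacle is exactly this last disintegration step: passing from a ccp map on the crossed product $B\rtimes_{\max}G$ back to an \emph{equivariant} ccp map on $B$ that restricts to $\pi$ on $A$. The care is concentrated in two places — establishing nondegeneracy of $\Phi$ (needed to make the Stinespring isometry $V$ a genuine isometry, hence to get $\varphi\circ\iota=\pi$ rather than $V^*V\pi(a)$), and extracting the intertwining relations from the multiplicative-domain identities while keeping track of the canonical maps $i_A,i_B$ and the canonical unitaries $u_g^A,u_g^B$. The subtlety with $\iota$ possibly being degenerate (so that $\iota\rtimes_{\max}G$ need not be nondegenerate) is precisely what the Stinespring route sidesteps, since it only uses that the genuine representation $\rho$ extends to multipliers, together with equivariance of $\iota$; everything else is formal.
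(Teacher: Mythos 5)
Your proposal is correct, and while the easy implications (ii)$\Rightarrow$(iii) and (iii)$\Rightarrow$(i) coincide with the paper's (both use Lemma \ref{int lem} to integrate the covariant ccp pair $(\varphi,u)$ and then compare with the faithful $\pi\rtimes u$), your proof of the main implication (i)$\Rightarrow$(ii) takes a genuinely different route. The paper first reduces to the \emph{unital} case: it unitizes $A$, $B$, $\iota$, $\pi$, and checks via a diagram of short exact sequences that $\widetilde{\iota}\rtimes G$ is still injective; then, after Arveson's extension theorem produces a ucp map $\widetilde{\pi\rtimes u}\colon \widetilde{B}\rtimes_{\max}G\to\Bd(H)$ extending $\pi\rtimes u$, it exploits the fact that in the unital case $\widetilde{B}$ and the canonical group unitaries all sit \emph{inside} $\widetilde{B}\rtimes_{\max}G$: the unitaries lie in the multiplicative domain, so the restriction of $\widetilde{\pi\rtimes u}$ to $\widetilde{B}$ is already the desired equivariant ccp map, with no dilation needed. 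You instead stay non-unital throughout: you reduce to $\pi$ nondegenerate (your direct-sum reduction is fine, since covariance forces $u$ to reduce $\overline{\pi(A)H}$), show the Arveson extension $\Phi$ is nondegenerate via the multiplicative-domain identity $\Phi(f_\mu c)=\Phi(f_\mu)\Phi(c)$, pass to a minimal Stinespring dilation (nondegeneracy of $\Phi$ and of $\rho$ giving $V^*V=1$), disintegrate $\rho$ into a covariant pair $(\rho_B,w)$ of $(B,G)$, derive the intertwining relations $\rho_B(\iota(a))V=V\pi(a)$ and $w_gV=Vu_g$ from the standard fact that multiplicative-domain elements satisfy $\rho(c)V=V\Phi(c)$, and compress. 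Your computations all check out (in particular the extraction of the intertwiners only uses equivariance of $\iota$ on $C_c(G,A)$, so degeneracy of $\iota\rtimes_{\max}G$ is indeed harmless, as you note). The trade-off: the paper's argument is shorter and gets equivariance in one line, at the cost of the unitization bookkeeping; yours avoids unitization entirely and is more explicit about how a map on $B\rtimes_{\max}G$ is pulled back to an equivariant map on $B$, at the cost of invoking the Stinespring dilation and the representation-theoretic disintegration machinery.
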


\begin{proof}
Assume (i), and let $(\pi,u):(A,G)\to \Bd(H)$ be a covariant representation.  We must show that the dashed arrow below can be filled in with a ccp $G$-map
\begin{equation}\label{desideratum}
\xymatrix{ B \ar@{-->}[dr] & \\ A \ar[u]^-\iota \ar[r]^-\pi & \Bd(H) }.
\end{equation}
Let $\widetilde{A}$ and $\widetilde{B}$ be the unitzations of $A$ and $B$ and let $\widetilde{\pi}:\widetilde{A}\to \Bd(H)$ and $\widetilde{\iota}:\widetilde{A}\to \widetilde{B}$ be the canonical (equivariant) unital extensions.  It will suffice to prove that the dashed arrow below 
$$
\xymatrix{ \widetilde{B} \ar@{-->}[dr] & \\ \widetilde{A} \ar[u]^-{\widetilde{\iota}} \ar[r]^-{\widetilde{\pi}} & \Bd(H) }
$$
can be filled in with an equivariant ccp map; indeed, if we can do this, then the restriction of the resulting equivariant ccp map $\widetilde{B}\to \Bd(H)$ to $B$ will have the desired property.

Since the descent $\iota\rtimes G:A\rtimes_{\max} G \to B\rtimes_{\max} G$ of $\iota$ is injective by assumption,  it follows from this and the commutative diagram
$$
\xymatrix{ 0\ar[r] & B\rtimes_{\max} G \ar[r] & \widetilde{B}\rtimes_{\max} G \ar[r] & \C\rtimes_{\max} G \ar[r] & 0 \\
0\ar[r] & A\rtimes_{\max} G \ar[r] \ar[u] & \widetilde{A}\rtimes_{\max} G \ar[r] \ar[u] & \C\rtimes_{\max} G \ar@{=}[u] \ar[r] & 0 }
$$
of short exact sequences that the map 
$$
\widetilde{\iota}\rtimes G:\widetilde{A}\rtimes_{\max} G\to \widetilde{B}\rtimes_{\max} G
$$
is injective as well.  From now on, to avoid cluttered notation, we will assume that $A$, $B$, $\pi$ and $\iota$ are unital, and that the map $\iota\rtimes G:A\rtimes_{\max}G\to B\rtimes_{\max}G$  is injective; our goal is to fill in the dashed arrow in line \eqref{desideratum} under these new assumptions.

Now, as we are assuming that $\iota\rtimes G:A\rtimes_{\max} G \to B\rtimes_{\max} G$ is injective, in the diagram below
$$
\xymatrix{ B\rtimes_{\max} G \ar@{-->}[dr]^-{\widetilde{\pi\rtimes u}} & \\ A\rtimes_{\max}G \ar[r]^-{\pi \rtimes u} \ar[u]^-{\iota\rtimes G} & \Bd(H) }
$$
we may thus use injectivity of $\Bd(H)$ (i.e.\ Arveson's extension theorem as in for example \cite[Theorem 1.6.1]{Brown:2008qy}) to show that the dashed arrow can be filled in with a ucp map.  Any operator of the form $u_g$ is in the multiplicative domain of $\widetilde{\pi\rtimes u}$, from which it follows that the restriction $\phi$ of $\widetilde{\pi\rtimes u}$ to $B$ is equivariant.  This restriction $\phi$ is the desired map.

The implication (ii)$\Rightarrow$(iii) is clear, so it remains to show (iii)$\Rightarrow$(i).  Let $\pi\rtimes u:A\rtimes_{\max} G \to \Bd(H)$ be a faithful representation such that there is an ccp $G$-map $\widetilde{\pi}:B\to \Bd(H)$ that extends $\pi$ as in (iii).  Lemma \ref{int lem} implies that this ccp map integrates to a ccp map $\widetilde{\pi}\rtimes u:B\rtimes_{\max} G \to \Bd(H)$.  As the diagram 
$$
\xymatrix{ B\rtimes_{\max} G \ar[dr]^{\widetilde{\pi}\rtimes u} & \\ A\rtimes_{\max} G \ar[r]^-{\pi\rtimes u} \ar[u]^-{\iota\rtimes G} & \Bd(H) }
$$
commutes and the horizontal map is injective, the vertical map is injective too.
\end{proof}

Notice that $A\rtimes_\max G=A\rtimes_\inj G$ if and only if every $G$-embedding $\iota\colon A\into B$ satisfies the equivalent conditions in Proposition~\ref{inj=max}. Hence we get the following immediate consequence, for which we need one additional definition.

\begin{definition}\label{g inj rep}
A covariant representation $(\pi,u):(A,G)\to \Bd(H)$ is \emph{$G$-injective} if for any $G$-embedding $A\subseteq  B$ there exists a ccp map $\sigma:B\to \Bd(H)$ that extends $\pi$, and satisfies the covariance relation for $u$.
\end{definition}

\begin{corollary}\label{inj lem}
For a $G$-algebra $A$, the following are equivalent:
\begin{enumerate}[(i)]
\item $A\rtimes_\max G=A\rtimes_\inj G$;
\item every covariant representation $(\pi,u)$ is $G$-injective;
\item there is a $G$-injective covariant representation that integrates to a faithful representation of $A\rtimes_\max G$.
\end{enumerate}
Moreover, if $G$ is exact, $\rtimes_{\inj}$ may be replaced by $\rtimes_{\red}$ in the above. \qed
\end{corollary}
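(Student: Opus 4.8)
The plan is to reduce everything to Theorem~\ref{inj=max}, using the observation recorded immediately before the statement: condition~(i) holds if and only if \emph{every} $G$-embedding $\iota\colon A\into B$ satisfies the equivalent conditions of that theorem. The guiding point is that Definition~\ref{g inj rep} of a $G$-injective covariant representation is essentially condition~(ii) of Theorem~\ref{inj=max}, only with the universal quantifiers over representations and over embeddings interchanged. So the corollary is really a matter of matching up definitions while keeping track of the order of quantifiers, which is why it can be stated with \qed.

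First I would prove (i)$\Leftrightarrow$(ii). By the preceding remark, (i) is equivalent to the assertion that for every $G$-embedding $\iota\colon A\into B$, condition~(ii) of Theorem~\ref{inj=max} holds, i.e.\ that for every covariant representation $(\pi,u)$ there is a ccp $G$-map $\varphi\colon B\to\Bd(H)$ with $\varphi\circ\iota=\pi$. On the other hand, by Definition~\ref{g inj rep}, condition~(ii) of the corollary says that for every covariant representation $(\pi,u)$ \emph{and} every $G$-embedding $\iota$ the same extension exists. These two statements differ only by swapping the two universal quantifiers, hence are logically equivalent.

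Next, (ii)$\Rightarrow$(iii) needs only the existence of \emph{some} faithful covariant representation. I would take a faithful nondegenerate representation $\rho\colon A\rtimes_\max G\to\Bd(H)$ and let $(\pi,u)$ be the covariant pair with $\pi\rtimes u=\rho$ (which exists by the universal property of $\rtimes_\max$); this integrated form is faithful, and by hypothesis $(\pi,u)$ is $G$-injective, giving~(iii). For (iii)$\Rightarrow$(i), I would fix an arbitrary $G$-embedding $\iota\colon A\into B$ and use the given $G$-injective, faithful representation $(\pi,u)$ as the witness in condition~(iii) of Theorem~\ref{inj=max}: $G$-injectivity supplies the ccp $G$-map $\varphi\colon B\to\Bd(H)$ extending $\pi$ along $\iota$, and $\pi\rtimes u$ is faithful by assumption. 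Theorem~\ref{inj=max} then yields injectivity of $\iota\rtimes_\max G$; since $\iota$ was arbitrary, the remark gives~(i).

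The main thing to be careful about is this last implication: the \emph{same} fixed representation $(\pi,u)$ must serve simultaneously as the witness in Theorem~\ref{inj=max}(iii) for \emph{all} embeddings $\iota$, and this is exactly what $G$-injectivity (extendability along every embedding) guarantees, so there is no genuine obstruction beyond bookkeeping. Finally, for the ``moreover'' clause I would simply invoke the fact recalled earlier that $A\rtimes_\inj G=A\rtimes_\red G$ when $G$ is exact (\cite[Proposition~4.2]{Buss:2018nm}), which lets us replace $\rtimes_\inj$ by $\rtimes_\red$ throughout the equivalences.
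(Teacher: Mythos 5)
Your proposal is correct and follows essentially the same route as the paper: the corollary is stated with \qed precisely because it amounts to combining the observation that condition (i) is equivalent to every $G$-embedding satisfying the conditions of Theorem~\ref{inj=max}, a swap of universal quantifiers to get (ii), the standard correspondence between nondegenerate faithful representations of $A\rtimes_\max G$ and covariant pairs for (ii)$\Rightarrow$(iii), and Theorem~\ref{inj=max}(iii)$\Rightarrow$(i) applied with one fixed $G$-injective faithful pair as witness for all embeddings. The appeal to \cite[Proposition~4.2]{Buss:2018nm} for the exact case is also exactly what the paper intends.
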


Our next goal is to develop this to get a characterization in terms of an amenability property of a more traditional `approximation property' form.

\begin{lemma}\label{unital lem}
Let $A$ be a $G$-algebra and let $(\pi,u)\colon (A,G)\to \Bd(H)$ be a nondegenerate $G$-injective covariant pair. Then for any unital $G$-algebra $C$ there exists a ucp $G$-map $\phi:C\to \pi(A)'\sbe \Bd(H)$.
\end{lemma}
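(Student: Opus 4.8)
The plan is to use Lance's tensor product trick together with the $G$\nobreakdash-injectivity of $(\pi,u)$ and a multiplicative-domain argument. The guiding idea is that applying $G$\nobreakdash-injectivity to the embedding of $A$ into $A\otimes_{\max}C$ produces a ccp map out of which the $C$\nobreakdash-variable can be extracted, and that multiplicativity on the $A$\nobreakdash-variable forces the image of $C$ into the commutant of $\pi(A)$.

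First I would form the maximal tensor product $A\otimes_{\max}C$ with its diagonal $G$\nobreakdash-action $\beta$ and consider the map $\iota\colon A\to A\otimes_{\max}C$, $a\mapsto a\otimes 1_C$. Since the given action fixes the unit $1_C$, this is an equivariant $*$\nobreakdash-homomorphism; it is isometric (as $\|a\otimes 1_C\|=\|a\|$), hence a $G$\nobreakdash-embedding. Applying the hypothesis that $(\pi,u)$ is $G$\nobreakdash-injective to $\iota$ then yields a ccp $G$\nobreakdash-map $\sigma\colon A\otimes_{\max}C\to\Bd(H)$ with $\sigma\circ\iota=\pi$ and $\sigma(\beta_g(x))=u_g\sigma(x)u_g^*$ for all $x$ and $g$. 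Because $\pi$ is nondegenerate and $\sigma\circ\iota=\pi$, the map $\sigma$ is itself nondegenerate, so it extends uniquely to a unital, equivariant, strictly continuous ccp map $\bar\sigma\colon\M(A\otimes_{\max}C)\to\Bd(H)$ (unitality follows since $\sigma(e_\lambda\otimes 1_C)=\pi(e_\lambda)\to 1_H$ strongly for an approximate unit $(e_\lambda)$ of $A$).

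Using the canonical commuting nondegenerate embeddings $\M(A)\to\M(A\otimes_{\max}C)$ and $C=\M(C)\to\M(A\otimes_{\max}C)$, I would then define
$$
\phi\colon C\to\Bd(H),\qquad \phi(c):=\bar\sigma(1_{\M(A)}\otimes c).
$$
Since $C$ is unital, $c\mapsto 1_{\M(A)}\otimes c$ is a unital equivariant $*$\nobreakdash-homomorphism into $\M(A\otimes_{\max}C)$, so $\phi$ is a ucp $G$\nobreakdash-map into $(\Bd(H),\Ad u)$. To see that $\phi$ lands in $\pi(A)'$, I would use that $\bar\sigma$ restricted to $\M(A)\otimes 1_C$ coincides with the unital nondegenerate extension $\bar\pi$ of $\pi$, hence is a $*$\nobreakdash-homomorphism; consequently $\M(A)\otimes 1_C$ lies in the multiplicative domain of $\bar\sigma$. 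As $\M(A)\otimes 1_C$ and $1_{\M(A)}\otimes C$ commute in $\M(A\otimes_{\max}C)$, the multiplicative-domain relations give, for $a\in A$ and $c\in C$,
$$
\pi(a)\phi(c)=\bar\sigma\bigl((a\otimes 1_C)(1\otimes c)\bigr)=\bar\sigma(a\otimes c)=\bar\sigma\bigl((1\otimes c)(a\otimes 1_C)\bigr)=\phi(c)\pi(a),
$$
so $\phi(c)\in\pi(A)'$. Since $\pi(A)'$ is $\Ad u$\nobreakdash-invariant, $\phi$ is the desired ucp $G$\nobreakdash-map $C\to\pi(A)'$.

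The main obstacle is the bookkeeping forced by the possible nonunitality of $A$: the construction genuinely requires the unit $1_C$ (to form $\iota$ and to make $\phi$ unital) and the multiplier algebra $\M(A\otimes_{\max}C)$ (to make sense of $1_{\M(A)}\otimes c$). The two points needing care are (a) that the nondegenerate ccp map $\sigma$ extends to a \emph{unital} equivariant ccp map on multipliers, and (b) the multiplicative-domain computation showing that $\M(A)\otimes 1_C$ commutes, under $\bar\sigma$, with the image of $C$. Both are standard, but this is precisely where nondegeneracy of $(\pi,u)$ and unitality of $C$ enter essentially.
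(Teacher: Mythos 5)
Your proof is correct and follows essentially the same route as the paper: embed $A$ unitally-in-the-$C$-variable into a tensor product with $C$, apply $G$-injectivity to get a ccp $G$-map extending $\pi$, extend to the multiplier algebra via nondegeneracy (Lance's Corollary 5.7), and use the multiplicative domain of the extension to push the image of $C$ into $\pi(A)'$. The only differences are immaterial: you use $A\otimes_{\max}C$ where the paper uses the spatial product $C\otimes A$, and your extra step identifying $\bar\sigma$ on all of $\M(A)\otimes 1_C$ with $\bar\pi$ is unnecessary, since $A\otimes 1_C$ lying in the multiplicative domain already suffices for the commutation computation.
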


\begin{proof}
Consider the canonical $G$-embedding 
$$
\iota\colon A\into C\otimes A,\quad a\mapsto 1\otimes a.
$$
Then $G$-injectivity of $\pi$ yields a ccp $G$-map $\varphi\colon C\otimes A\to \Bd(H)$ with $\varphi\circ\iota=\pi$.
Since $\pi$ is nondegenerate, so is $\varphi$, that is, $\varphi(e_i)\to 1$ strongly if $(e_i)$ is an approximate unit for $A$. 
It follows that $\varphi$ extends to a ucp map $\bar\varphi\colon \M(C\otimes A)\to\Bd(H)$, see \cite[Corollary~5.7]{Lance:1995ys}. Moreover, this extension is $G$-equivariant as can be seen from the construction of $\bar\varphi$ in \cite{Lance:1995ys}.  We now consider the canonical $G$-embedding $j\colon C\to \M(C\otimes A)$, $c\mapsto c\otimes 1$, and then define $\phi\colon C\to \Bd(H)$ by $\phi(c):= \bar\varphi(j(c))$. It remains to show that $\phi(C)\sbe \pi(A)'$. But since $\bar\varphi\circ\iota=\varphi\circ\iota=\pi$ is a homomorphism, the image of $\iota$ lies in the multiplicative domain of $\bar\varphi$, so that
\begin{multline*}
\phi(c)\pi(a)=\bar\varphi(j(c))\varphi(\iota(a))
=\bar\varphi(j(c)\iota(a))\\=\bar\varphi(\iota(a)j(c))=\varphi(\iota(a))\bar\varphi(j(c))=\pi(a)\phi(c).
\end{multline*}
\end{proof}

Here is the version of amenability we will use. To state it, if $A$ is a $G$-algebra and $(\pi,u):(A,G)\to \Bd(H)$ a covariant pair, then $\pi(A)'$ will 
be  equipped with the $G$-action $\beta=\Ad_u$ defined  by conjugation by $u$.

\begin{definition}\label{com amen}
Let $A$ be a $G$-algebra, and $(\pi,u):(A,G)\to \Bd(H)$ a covariant pair.  The pair $(\pi,u)$ is \emph{commutant amenable} (C-amenable) if there exists a net $(\theta_i:G\to \pi(A)')$ of positive type functions (with respect to $\beta=\Ad_u$) such that:
\begin{enumerate}[(i)]
\item each $\theta_i$ is finitely supported;
\item for each $i$, $\theta_i(e)\leq 1$;
\item for each $g\in G$, $\theta_i(g)\to 1$ ultraweakly as $i\to\infty$.
\end{enumerate}
The $G$-algebra $A$ is \emph{commutant amenable} (C-amenable) if every covariant pair is C-amenable.
\end{definition}

\begin{remark}\label{ca rem}
If a $G$-algebra $A$ is amenable, then it is C-amenable.  This follows as any covariant representation $(\pi,u)$ of $(A,G)$ extends to a covariant representation of $(A^{**},G)$, and as the image of $Z(A^{**})$ under this extension is necessarily contained in the commutant $\pi(A)'$.
\end{remark}

We give the above definition to make the analogy with amenability clearer. 
However, it will be more convenient to work with the following reformulation. To state it, recall that if $(\pi,u)\colon (A,G)\to \Bd(H)$
is a covariant representation of the $G$-algebra $A$, we use the action $\beta=\Ad_u$ on the commutant $\pi(A)'$ 
to define $\ell^2(G,\pi(A)')$ as in Definition \ref{standard mod}.  The proof of the next lemma is essentially the same as that of Lemma \ref{l2 amen}, and so omitted.

\begin{lemma}\label{ca reform}
Let $A$ be a $G$-algebra with action $\alpha$, and let $(\pi,u)$ be a covariant pair.  Then $(\pi,u)$ is C-amenable if and only if there exists a net $(\xi_i)$ in $\ell^2(G,\pi(A)')$ such that:
\begin{enumerate}
\item each $\xi_i$ is finitely supported;
\item for each $i$, $\langle \xi_i,\xi_i\rangle \leq 1$;
\item for each $g\in G$, $\langle \xi_i,\widetilde{\beta}_g(\xi_i)\rangle \to 1$ ultraweakly as $i\to\infty$. \qed
\end{enumerate}
\end{lemma}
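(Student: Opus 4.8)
The plan is to imitate the proof of Lemma~\ref{l2 amen} almost verbatim, replacing the von Neumann algebra $Z(A^{**})$ by the commutant $\pi(A)'$ and the action $\widetilde{\alpha}$ by $\widetilde{\beta}$, where $\beta=\Ad_u$. The substance is the usual correspondence, in the Hilbert-module picture over the von Neumann algebra $\pi(A)'$, between positive type functions $\theta\colon G\to\pi(A)'$ and diagonal coefficients $g\mapsto\langle\xi,\widetilde{\beta}_g\xi\rangle$ of vectors $\xi\in\ell^2(G,\pi(A)')$. I would treat the two implications separately: the passage from $\ell^2$-vectors to C-amenability is routine, whereas the converse, which needs a GNS-type construction followed by an approximation, is where the work lies. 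Note that $\pi(A)'$ is automatically a unital von Neumann algebra (it contains $1_H$), so no nondegeneracy hypothesis on $\pi$ is needed.

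For the forward direction, suppose $(\xi_i)$ is a net in $\ell^2(G,\pi(A)')$ satisfying the three listed conditions, and set $\theta_i(g):=\langle\xi_i,\widetilde{\beta}_g\xi_i\rangle$. Using that each $\widetilde{\beta}_h$ satisfies $\langle\widetilde{\beta}_h\zeta,\widetilde{\beta}_h\zeta'\rangle=\beta_h(\langle\zeta,\zeta'\rangle)$ and that $\widetilde{\beta}_{g_k}\widetilde{\beta}_{g_k^{-1}g_l}=\widetilde{\beta}_{g_l}$, one obtains the identity $\beta_{g_k}\big(\theta_i(g_k^{-1}g_l)\big)=\langle\widetilde{\beta}_{g_k}\xi_i,\widetilde{\beta}_{g_l}\xi_i\rangle$; the right-hand side is the $(k,l)$-entry of a Gram matrix, which is positive in $M_n(\pi(A)')$ because $\sum_{k,l}m_k^*\langle\widetilde{\beta}_{g_k}\xi_i,\widetilde{\beta}_{g_l}\xi_i\rangle m_l=\langle\sum_k\widetilde{\beta}_{g_k}\xi_i\,m_k,\sum_l\widetilde{\beta}_{g_l}\xi_i\,m_l\rangle\ge 0$ for all $m_1,\dots,m_n\in\pi(A)'$. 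Hence each $\theta_i$ is of positive type with respect to $\beta$. Finite support of $\theta_i$ follows from finite support of $\xi_i$, since only finitely many $g$ let the supports of $\xi_i$ and $\widetilde{\beta}_g\xi_i$ overlap, while $\theta_i(e)=\langle\xi_i,\xi_i\rangle\le 1$ and the ultraweak convergence $\theta_i(g)\to1$ are immediate. By Definition~\ref{com amen} this shows $(\pi,u)$ is C-amenable.

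For the converse, suppose $(\pi,u)$ is C-amenable, witnessed by $(\theta_i)$. For each fixed $i$ the function $\theta_i$ is finitely supported and of positive type, so a GNS-type construction over $\pi(A)'$---exactly as in \cite[Proposition~2.5]{Anantharaman-Delaroche:1987os}, with $\beta$ in place of the original action---produces a vector $\eta_i\in\ell^2(G,\pi(A)')$ with $\theta_i(g)=\langle\eta_i,\widetilde{\beta}_g\eta_i\rangle$ for all $g$; in particular $\langle\eta_i,\eta_i\rangle=\theta_i(e)\le 1$ and $\langle\eta_i,\widetilde{\beta}_g\eta_i\rangle\to1$ ultraweakly. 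I would then approximate each $\eta_i$ in $\|\cdot\|_2$ by a finitely supported $\zeta$; since the inner product is norm-continuous and each $\widetilde{\beta}_g$ is isometric, $\langle\zeta,\zeta\rangle$ and $\langle\zeta,\widetilde{\beta}_g\zeta\rangle$ approximate $\langle\eta_i,\eta_i\rangle$ and $\langle\eta_i,\widetilde{\beta}_g\eta_i\rangle$ in the norm of $\pi(A)'$, and rescaling $\zeta$ by $\|\zeta\|_2^{-1}$ when $\|\zeta\|_2>1$ forces $\langle\zeta,\zeta\rangle\le 1$ (a positive element of norm $\le 1$ is $\le 1$) at the cost of a factor tending to $1$. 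Re-indexing over the product of $I$ with the directed set of finite subsets of $G$ and of approximation tolerances, and using that norm convergence implies ultraweak convergence, assembles these into a single net $(\xi_j)$ with the three required properties.

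The hard part will be the converse, and within it the two delicate points are (a) realizing the GNS vector inside the \emph{standard} module $\ell^2(G,\pi(A)')$ rather than in an abstract GNS completion, and (b) organizing the per-$i$ finite-support approximations into one net so that all three conditions hold simultaneously. Both are handled precisely as in \cite[Proposition~2.5 and Th\'{e}or\`{e}me 3.3]{Anantharaman-Delaroche:1987os} and in the proof of Lemma~\ref{l2 amen}, the only change being the ambient von Neumann algebra $\pi(A)'$ and its $G$-action $\beta$; since nothing in those arguments uses commutativity of the coefficient algebra or the particular form of the action, they transfer without modification.
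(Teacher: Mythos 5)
Your proposal is correct and follows exactly the route the paper intends: the paper omits the proof of Lemma~\ref{ca reform}, stating it is essentially the same as that of Lemma~\ref{l2 amen}, and your argument is precisely that adaptation, with $Z(A^{**})$ replaced by $\pi(A)'$ and $\widetilde{\alpha}$ by $\widetilde{\beta}=\widetilde{\Ad_u}$, including the Gram-matrix verification of positive type in the forward direction and the GNS-type construction from \cite[Proposition~2.5]{Anantharaman-Delaroche:1987os} plus finite-support approximation in the converse. Your added details (the identity $\beta_{g_k}(\theta_i(g_k^{-1}g_l))=\langle\widetilde{\beta}_{g_k}\xi_i,\widetilde{\beta}_{g_l}\xi_i\rangle$, the rescaling to enforce $\langle\xi,\xi\rangle\le 1$, and the re-indexing over a product directed set) are all sound and consistent with what the paper's sketch leaves implicit.
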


\begin{proposition}\label{pro:CA=>max=red}
Let $A$ be a $G$-algebra, and say there exists a C-amenable covariant pair $(\pi,u)$ which integrates to a faithful representation of $A\rtimes_{\max}G$. Then $A\rtimes_\max G=A\rtimes_\red G$.
\end{proposition}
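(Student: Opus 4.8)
The plan is to feed the net $(\xi_i)$ coming from the $\ell^2$-reformulation of commutant amenability (Lemma~\ref{ca reform}) into a net of contractive completely positive compression maps that tie the given faithful representation $\pi\rtimes u$ of $A\rtimes_\max G$ to the regular (reduced) representation, and then to conclude using weak lower semicontinuity of the norm.

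First I would put the reduced representation in Fell-absorption form. Since $\pi\rtimes u$ is faithful on $A\rtimes_\max G$, its restriction $\pi$ is a faithful representation of $A$, so the covariant pair $(\pi\otimes 1,u\otimes\lambda)$ on $H\otimes\ell^2(G)$ (with $\lambda$ the left regular representation of $G$) is unitarily equivalent to the regular representation built from $\pi$. Hence its integrated form, which I denote $\Lambda\colon A\rtimes_\max G\to\Bd(H\otimes\ell^2(G))$, satisfies $\|\Lambda(x)\|=\|x\|_\red$ for all $x$, and $\Lambda(a\delta_s)=\pi(a)u_s\otimes\lambda_s$.

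Next, for each $i$ I would define $V_i\colon H\to H\otimes\ell^2(G)$ by $V_i\zeta=\sum_{g}\xi_i(g)\zeta\otimes\delta_g$, a finite sum since $\xi_i$ is finitely supported. The normalisation $\langle\xi_i,\xi_i\rangle\le 1$ gives $V_i^*V_i=\langle\xi_i,\xi_i\rangle\le 1$, so $\Phi_i(T):=V_i^*TV_i$ defines a contractive completely positive map $\Bd(H\otimes\ell^2(G))\to\Bd(H)$. The key computation is to evaluate $\Phi_i$ on $\pi(a)u_s\otimes\lambda_s$: a direct expansion gives $\Phi_i(\pi(a)u_s\otimes\lambda_s)=\sum_g\xi_i(sg)^*\pi(a)u_s\xi_i(g)$, and then, using that each $\xi_i(g)$ lies in $\pi(A)'$ and so commutes with $\pi(a)$, the relation $u_s\xi_i(g)=\beta_s(\xi_i(g))u_s$ for $\beta=\Ad_u$, and the substitution $h=sg$, this collapses to $\pi(a)\langle\xi_i,\widetilde{\beta}_s\xi_i\rangle u_s$.

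Finally I would pass to the limit. By condition (iii) of Lemma~\ref{ca reform}, $\langle\xi_i,\widetilde{\beta}_s\xi_i\rangle\to 1$ ultraweakly; since left multiplication by $\pi(a)$ and right multiplication by $u_s$ preserve ultraweak convergence, linearity gives $\Phi_i(\Lambda(x))\to(\pi\rtimes u)(x)$ ultraweakly for every $x\in C_c(G,A)$. As each $\Phi_i$ is contractive, $\|\Phi_i(\Lambda(x))\|\le\|\Lambda(x)\|=\|x\|_\red$, and because the closed ball of radius $\|x\|_\red$ is weakly closed, the ultraweak limit obeys $\|(\pi\rtimes u)(x)\|\le\|x\|_\red$. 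Faithfulness of $\pi\rtimes u$ then yields $\|x\|_\max=\|(\pi\rtimes u)(x)\|\le\|x\|_\red$, and as the reverse inequality is automatic we conclude $A\rtimes_\max G=A\rtimes_\red G$. The main obstacle is locating the correct maps $V_i$ and carrying out that compression identity, so that the compressed regular representation reproduces $\pi\rtimes u$ twisted only by the almost-invariant factors $\langle\xi_i,\widetilde{\beta}_s\xi_i\rangle$; the essential structural input is precisely that the $\xi_i$ take values in the commutant $\pi(A)'$, which lets them commute past $\pi(a)$ while still interacting with $u_s$ through $\beta=\Ad_u$.
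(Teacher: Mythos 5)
Your proposal is correct and is essentially identical to the paper's proof: the paper also compresses the Fell-absorption representation $(\pi\otimes 1)\rtimes(u\otimes\lambda)$ by contractions $T_i v = (g\mapsto \xi_i(g)v)$ (your $V_i$ under the identification $\ell^2(G,H)\cong H\otimes\ell^2(G)$), performs the same commutant computation to obtain $\sum_g \pi(f(g))\langle\xi_i,\widetilde\beta_g\xi_i\rangle u_g$, and concludes via ultraweak convergence and the fact that ultraweak limits do not increase norms. The only cosmetic difference is that the paper needs just the inequality $\|\Lambda(x)\|\le\|x\|_\red$ (automatic from Fell's trick for any covariant pair), whereas you assert equality via faithfulness of $\pi$ — true, but not needed.
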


\begin{proof}
Let $(\pi,u):(A,G)\to \Bd(H)$ be a covariant pair as in the statement.  Let $(\xi_i:G\to \pi(A)')$ be a net as in Lemma \ref{ca reform}.  For each $i$, define 
$$
T_i:H\to \ell^2(G,H),\quad v\mapsto \big(g\mapsto \xi_i(g)v\big) .
$$
A direct computation shows that  
\begin{align*}
\|T_iv\|^2 & =\langle v,\langle \xi_i,\xi_i\rangle v\rangle.
\end{align*}
As $\langle \xi_i,\xi_i\rangle \leq 1$ for all $i$, the term on the right is bounded above by $\|v\|^2$, and thus $\|T_i\|\leq 1$.  The adjoint of $T_i$ is easily seen to be given by 
$$
T_i^*(\eta)=\sum_G\xi_i(g)^*\eta(g) 
$$ 
for all $\eta\in \contc(G,H)$.

Now, via Fell's trick, the covariant pair $(\pi\otimes 1,u\otimes \lambda):(A,G)\to \Bd(\ell^2(G,H))$ integrates to $A\rtimes_{\red} G$.  Consider now the ccp map
$$
\phi_i:\Bd(\ell^2(G,H))\to\Bd(H),\quad b\mapsto T_i^*bT_i.
$$
We compute for $f\in \contc(G,A)$ and $v\in H$:
\begin{align*}
\phi_i\circ (\pi\otimes 1)\rtimes (u\otimes \lambda)(f)&=T_i^*((\pi\otimes 1)\rtimes (u\otimes \lambda)(f))T_i\\
&=\sum_{g,h\in G}\xi_i(h)^*\pi(f(g))u_g\xi_i(g^{-1}h)
\end{align*}
Using that $\xi_i$ takes values in $\pi(A)'$, this equals
\begin{align*}
\sum_{g\in G}\pi(f(g))\left(\sum_{h\in G}\xi_i(h)^*u_g\xi_i(g^{-1}h)u_g^*\right)u_g
=\sum_G\pi(f(g))\langle \xi_i,\widetilde{\beta}_{g}\xi_i\rangle u_g.
\end{align*}
As $\langle \xi_i,\widetilde{\beta}_{g}\xi_i\rangle$ converges ultraweakly to $1$ and as multiplication is separately ultraweakly continuous, we get ultraweak convergence 
$$
\phi_i\circ (\pi\otimes 1)\rtimes (u\otimes \lambda)(f)\to (\pi\rtimes u)(f) \text{ as } i\to\infty.
$$
As ultraweak limits do not increase norms and as each $\phi_i$ is ccp, we get
$$
\|(\pi\rtimes u)(f)\|\leq \limsup_{i\to\infty}\|\phi_i\circ (\pi\otimes 1)\rtimes (u\otimes \lambda)(f)\|\leq \|(\pi\otimes 1)\rtimes (u\otimes \lambda)(f)\|.
$$
Hence as $(\pi\otimes 1)\rtimes (u\otimes \lambda)$ extends to $A\rtimes_{\red} G$, we get 
$$
\|(\pi\rtimes u)(f)\|\leq \|f\|_{A\rtimes_{\red} G}.
$$
As $\pi\rtimes u$ is faithful on $A\rtimes_{\max}G$, however, we are done.
\end{proof}

Finally in this section, we are able to give a characterization of weak containment in terms of commutant amenability, at least for exact groups.

\begin{theorem}\label{com amen the}
Let $G$ be an exact discrete group, and let $A$ be a $G$-$C^*$-algebra.  Then the following are equivalent:
\begin{enumerate}[(i)]
\item $A$ is commutant amenable;
\item $A\rtimes_{\max}G=A\rtimes_{\red} G$;
\item $A\rtimes_{\max}G=A\rtimes_{\inj} G$.
\end{enumerate}
\end{theorem}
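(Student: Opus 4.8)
The plan is to establish the cycle (i)$\Rightarrow$(ii)$\Leftrightarrow$(iii)$\Rightarrow$(i). The equivalence (ii)$\Leftrightarrow$(iii) is essentially free: since $G$ is exact we have $A\rtimes_\inj G=A\rtimes_\red G$ (as recorded above, following \cite[Proposition 4.2]{Buss:2018nm}), so the two crossed-product identities literally coincide. Thus the genuine content is to connect commutant amenability to weak containment in both directions. Throughout I take covariant pairs to be nondegenerate, which is the operative convention here and is exactly what makes the extension argument of Remark \ref{ca rem} (and the application of Lemma \ref{unital lem}) go through.

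For (i)$\Rightarrow$(ii) I would invoke Proposition \ref{pro:CA=>max=red} directly. Choose a covariant pair $(\pi,u)$ whose integrated form $\pi\rtimes u$ is a faithful representation of $A\rtimes_\max G$ (for instance the one underlying the universal representation). Since $A$ is commutant amenable, this particular pair is C-amenable, and Proposition \ref{pro:CA=>max=red} then yields $A\rtimes_\max G=A\rtimes_\red G$ at once.

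The substance lies in (iii)$\Rightarrow$(i), where the key inputs are Corollary \ref{inj lem}, Lemma \ref{unital lem}, and exactness. Assuming $A\rtimes_\max G=A\rtimes_\inj G$, Corollary \ref{inj lem} guarantees that \emph{every} covariant representation $(\pi,u)$ of $(A,G)$ is $G$-injective. Fix such a pair. Applying Lemma \ref{unital lem} with the unital $G$-algebra $C=\ell^\infty(G)$ produces a ucp $G$-map
$$
\phi\colon \ell^\infty(G)\to \pi(A)',
$$
where $\pi(A)'$ carries the action $\beta=\Ad_u$. Now exactness enters: by Theorem \ref{ozawa the} the translation action on $\ell^\infty(G)$ is strongly amenable, so by Definition \ref{def-amenable} there is a net $(\theta_i\colon G\to \ell^\infty(G))$ of finitely supported positive type functions with $\theta_i(e)\le 1$ and $\theta_i(g)\to 1$ (here strict convergence is just norm convergence, since $\ell^\infty(G)$ is unital). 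I would then set $\tilde\theta_i:=\phi\circ\theta_i\colon G\to\pi(A)'$ and verify it witnesses C-amenability of $(\pi,u)$: finite support is clear, while $\tilde\theta_i(e)=\phi(\theta_i(e))\le\phi(1)=1$ and $\tilde\theta_i(g)=\phi(\theta_i(g))\to\phi(1)=1$ ultraweakly follow from $\phi$ being unital, positive and norm-continuous. As $(\pi,u)$ was arbitrary, $A$ is commutant amenable.

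The one step needing real care — and which I regard as the crux — is checking that $\tilde\theta_i=\phi\circ\theta_i$ is again positive type with respect to $\beta=\Ad_u$; this is precisely where complete positivity and equivariance of $\phi$ combine. Applying the amplification $\phi\otimes\id_{M_n}$ (positive because $\phi$ is completely positive) to the positive matrix $\big(\gamma_{g_k}(\theta_i(g_k^{-1}g_l))\big)_{k,l}$ supplied by positive-typeness of $\theta_i$, and then using $\phi\circ\gamma_g=\beta_g\circ\phi$, converts it into $\big(\beta_{g_k}(\tilde\theta_i(g_k^{-1}g_l))\big)_{k,l}\ge 0$, which is exactly the positive-type condition for $\tilde\theta_i$. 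Everything else is bookkeeping: the transport of positive type functions through an equivariant ucp map is the conceptual heart, and exactness is precisely what guarantees that the source net on $\ell^\infty(G)$ exists in the first place.
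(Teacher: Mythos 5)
Your proof is correct and takes essentially the same route as the paper's: (i)$\Rightarrow$(ii) via Proposition \ref{pro:CA=>max=red} applied to a faithful integrated representation, and (iii)$\Rightarrow$(i) by combining Corollary \ref{inj lem} and Lemma \ref{unital lem} with $C=\ell^\infty(G)$ to get a ucp $G$-map $\phi\colon\ell^\infty(G)\to\pi(A)'$, then pushing the positive type functions witnessing strong amenability of $\ell^\infty(G)$ (exactness, Theorem \ref{ozawa the}) through $\phi$. The only cosmetic differences are that you identify (ii) and (iii) outright using $A\rtimes_\inj G=A\rtimes_\red G$ for exact $G$ where the paper uses the trivial implication (ii)$\Rightarrow$(iii) and closes the cycle, and that your explicit amplification argument for preservation of positive-typeness is precisely the content the paper leaves implicit in the word ``postcomposing.''
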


\begin{proof}
The implication (i)$\Rightarrow$(ii) is Proposition~\ref{pro:CA=>max=red}.
The implication (ii)$\Rightarrow$(iii) is trivial, so it remains to show that $A\rtimes_{\max}G=A\rtimes_{\inj} G$ implies C-amenability.  Let then $(\pi,u)$ be a covariant pair for $(A,G)$.  We may apply Corollary \ref{inj lem} and Lemma \ref{unital lem} to get an equivariant ucp map $\phi:\ell^\infty(G)\to \pi(A)'$.  As $G$ is exact, $\ell^\infty(G)$ is strongly amenable; postcomposing a net $(\theta_i:G\to \ell^\infty(G))$ that shows $\ell^\infty(G)$ is strongly amenable  with $\phi$ gives C-amenability.
\end{proof}

\begin{remark} Similar to Remark \ref{rem-norm}, it follows from the above proof that for $G$ exact we can replace ultraweak convergence 
of the net $\theta_i:A\to \pi(A)'$ in the definition of C-amenability by norm convergence.
\end{remark}

\section{Matsumura's characterisations of weak containment}\label{comm case sec}

In this section, we connect the ideas in the previous section to other forms of amenability, and related results.  The key ideas here are due to Matsumura \cite{Matsumura:2012aa}, and the results are essentially fairly mild generalizations of Matsumura's.  Nonetheless, our proofs are somewhat different from those of \cite{Matsumura:2012aa}.  We also think some of the generalizations are worthwhile in their own right: for example, we remove some unitality and nuclearity assumptions, and have some applications to actions of non-exact groups.

The key technical tool in this section is a seminal theorem of Haagerup \cite{Haagerup:1975xh} on the existence of standard forms.  We summarize what will be the key points for us in the next theorem (see \cite{Matsumura:2012aa}*{Theorem 2.3} for a brief discussion how the following theorem follows from \cite{Haagerup:1975xh}).

\begin{theorem}\label{hsf the}
Let $A$ be a $G$-algebra.  There exist \emph{standard form representations} 
$$
\pi:A^{**}\to \Bd(H),\quad\text{and} \quad \pi^{\op}:(A^{\op})^{**}\to \Bd(H)
$$
on the same Hilbert space $H$
together with a unitary representation $u:G\to \U(H)$
with the following properties:
\begin{enumerate}[(i)]
\item $\pi$ and $\pi^{\op}$ are normal, unital, and faithful;
\item $(\pi, u)$ and $(\pi^{\op}, u)$ are covariant with respect to the canonical $G$-actions on $A^{**}$ and $(A^{\op})^{**}$;
\item having identified $A^{**}$ and $(A^{\op})^{**}$ with their images under $\pi$ and $\pi^{\op}$, we get $\pi(A)'=(A^{\op})^{**}$ and $\pi^{\op}(A^{\op})'=A^{**}$;
\item if $A$ is commutative, then $\pi(A)'=A^{**}$.  
\end{enumerate}
\end{theorem}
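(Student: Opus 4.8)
The plan is to invoke Haagerup's theory of the standard form of a von Neumann algebra, applied to $M:=A^{**}$, and then to promote the canonical structure it provides to the equivariant setting by means of the canonical unitary implementation of automorphisms.

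First I would apply Haagerup's theorem to the von Neumann algebra $M=A^{**}$. This produces a Hilbert space $H$, a faithful normal unital representation $\pi\colon M\to\Bd(H)$, a conjugate-linear isometric involution $J$ (the modular conjugation), and a self-dual cone $P\sbe H$, satisfying $JMJ=M'$ and $J\xi=\xi$ for all $\xi\in P$. Since $M$ is unital, $\pi$ is automatically unital, and normality and faithfulness are built into the standard form data; this yields property (i) for $\pi$. To construct $\pi^{\op}$, note that $x\mapsto J\pi(x)^*J$ is a normal unital injective $*$-anti-homomorphism $M\to M'$ whose image is all of $M'=JMJ$. Equivalently it defines a normal faithful unital $*$-representation of the opposite algebra $M^{\op}$ onto $M'$. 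Using the canonical (and $G$-equivariant) identification $(A^{\op})^{**}\cong(A^{**})^{\op}=M^{\op}$, this gives $\pi^{\op}\colon(A^{\op})^{**}\to\Bd(H)$ with image exactly $\pi(A)'=M'$, which establishes the $\pi^{\op}$ part of (i) together with the identifications in (iii). Applying conjugation by $J$ once more gives $\pi^{\op}((A^{\op})^{**})'=M''=M=A^{**}$, completing (iii).

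The heart of the argument is (ii). The $G$-action $\alpha$ on $A$ extends to an action on $M=A^{**}$ by normal $*$-automorphisms, that is, a homomorphism $G\to\Aut(M)$. By the canonical implementation property of the standard form, each $\alpha_g$ is implemented by a \emph{unique} unitary $u_g\in\U(H)$ that preserves the cone $P$; this uniqueness forces $g\mapsto u_g$ to be a group homomorphism, hence a genuine unitary representation of $G$ on $H$, and the covariance relation $\pi(\alpha_g(a))=u_g\pi(a)u_g^*$ holds by construction. The additional fact I would invoke is that the canonical implementation commutes with the modular conjugation, namely $u_gJ=Ju_g$; combined with the definition of $\pi^{\op}$ through $J$, this immediately yields covariance of $(\pi^{\op},u)$ as well, finishing (ii). Finally, for (iv), when $A$ is commutative the algebra $M=A^{**}$ is abelian, and the standard form of an abelian von Neumann algebra is maximal abelian, so $\pi(A)'=M'=M=A^{**}$.

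The main obstacle will be assembling the precise properties of Haagerup's canonical implementation: that it depends multiplicatively on $g$ (so that $u$ is an honest, not merely projective, unitary representation) and that it commutes with $J$. Both are standard consequences of the uniqueness encoded in the self-dual cone formulation of the standard form, but some care is needed to cite them correctly and, in particular, to check that the identification $(A^{\op})^{**}\cong(A^{**})^{\op}$ intertwines the two induced $G$-actions, so that the single representation $u$ simultaneously implements the actions on $A^{**}$ and $(A^{\op})^{**}$.
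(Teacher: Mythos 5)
Your proposal is correct and follows essentially the same route as the paper, which proves this result by citing Haagerup's standard form theory (via the discussion in Matsumura's Theorem 2.3): apply the standard form to $A^{**}$, obtain $\pi^{\op}$ from the modular conjugation $J$ via $x\mapsto J\pi(x)^*J$ together with the canonical identification $(A^{\op})^{**}\cong(A^{**})^{\op}$, and obtain $u$ from Haagerup's canonical (cone-preserving, hence multiplicative and $J$-commuting) unitary implementation of the extended $G$-action. The points you flag as needing care—multiplicativity of $g\mapsto u_g$, the relation $u_gJ=Ju_g$, and equivariance of the identification of biduals—are exactly the standard consequences of the uniqueness clause in Haagerup's theorem that the cited references supply.
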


The cleanest results we can prove on weak containment are in the case when $G$ is exact and the $G$-algebra $A$ is commutative, so we turn to this first.

\begin{theorem}\label{com ex the}
Let $G$ be an exact group, and $A$ a commutative $G$-algebra.  The following are equivalent: 
\begin{enumerate}[(i)]
\item \label{com boa} $A$ is strongly amenable;
\item \label{com ada} $A$ is amenable;
\item \label{com ca} $A$ is C-amenable;
\item \label{com m=r} $A\rtimes_{\max}G=A\rtimes_{\red}G$;
\item \label{com m=i} $A\rtimes_{\max}G=A\rtimes_{\inj}G$;
\item \label{com ** boa} $A^{**}$ is strongly amenable;
\item \label{com ** ada} $A^{**}$ is amenable;
\item \label{com ** ca} $A^{**}$ is C-amenable;
\item \label{com ** m=r} $A^{**}\rtimes_{\max}G=A^{**}\rtimes_{\red} G$;
\item \label{com ** m=i} $A^{**}\rtimes_{\max}G=A^{**}\rtimes_{\inj} G$.
\end{enumerate}
\end{theorem}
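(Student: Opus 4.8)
The plan is to sort the ten conditions into three clusters and then link the clusters together; throughout I use that $A$ commutative forces $A^{**}$ to be a commutative $C^*$-algebra as well. The first cluster is the ``amenability cluster'' (i), (ii), (vi), (vii). Since both $A$ and $A^{**}$ are commutative, and since for commutative $G$-algebras amenability and strong amenability coincide (as recalled after Definition~\ref{def-amenable}), I get (i)$\Leftrightarrow$(ii) and (vi)$\Leftrightarrow$(vii) for free. The bridge between the $A$-level and the $A^{**}$-level is the last assertion of Proposition~\ref{G-exact-char-amenable}: since $G$ is exact, $A$ is amenable if and only if $A^{**}$ is strongly amenable, that is, (ii)$\Leftrightarrow$(vi). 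Hence (i), (ii), (vi), (vii) are all equivalent.

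For the two ``crossed-product clusters'' I would simply invoke Theorem~\ref{com amen the}. Applied to the exact group $G$ and the $G$-algebra $A$ it yields (iii)$\Leftrightarrow$(iv)$\Leftrightarrow$(v); applied to $G$ and the $G$-algebra $A^{**}$ it yields (viii)$\Leftrightarrow$(ix)$\Leftrightarrow$(x). It then remains only to splice the C-amenability conditions (iii) and (viii) into the amenability cluster. One direction is painless: amenability implies C-amenability by Remark~\ref{ca rem}, so (ii)$\Rightarrow$(iii) and (vii)$\Rightarrow$(viii).

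For the converse implications I would isolate the following general statement and apply it twice: \emph{for any commutative $G$-algebra $C$, C-amenability of $C$ implies amenability of $C$}. Taking $C=A$ gives (iii)$\Rightarrow$(ii) and taking $C=A^{**}$ gives (viii)$\Rightarrow$(vii); together with everything above this closes every loop and proves the theorem. (Note that exactness of $G$ is not needed for this general statement; it enters only through Proposition~\ref{G-exact-char-amenable} and Theorem~\ref{com amen the}.)

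Proving this general statement is the one genuinely substantive step, and the place I expect the real work to sit. Here I would call on the Haagerup standard form of Theorem~\ref{hsf the} applied to $C$: it furnishes a Hilbert space $H$, a covariant pair $(\pi,u)$ for $(C^{**},G)$, and, by property~(iv) for the commutative algebra $C$, the identification $\pi(C)'=C^{**}$. Restricting $\pi$ to $C\subseteq C^{**}$ makes $(\pi,u)$ a covariant pair for $(C,G)$ whose commutant is $C^{**}$, and covariance identifies the action $\Ad_u$ on this commutant with the canonical $G$-action on $C^{**}$. If $C$ is C-amenable then \emph{every} covariant pair is C-amenable, in particular this one, so C-amenability hands us a net of finitely supported positive type functions $\theta_i\colon G\to \pi(C)'=C^{**}$ with $\theta_i(e)\le 1$ and $\theta_i(g)\to 1$ ultraweakly. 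Because $C$ is commutative, $Z(C^{**})=C^{**}$, so this net is precisely a witness to amenability of $C$ in the sense of Definition~\ref{def-amenable}. The crux — and essentially the only nonformal point in the whole argument — is exactly this: the standard form produces a single covariant pair whose commutant exhausts all of $Z(C^{**})$, so that the a priori much stronger ``for all pairs'' content of C-amenability collapses onto precisely the defining data of amenability.
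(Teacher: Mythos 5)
Your proposal is correct, and since Theorem \ref{com amen the}, Proposition \ref{G-exact-char-amenable}, Remark \ref{ca rem} and Theorem \ref{hsf the} all precede this theorem in the paper, there is no circularity; however, your route is organized genuinely differently from the paper's. The paper proves one closed cycle (i)$\Rightarrow$(ii)$\Rightarrow\cdots\Rightarrow$(x)$\Rightarrow$(i) and never invokes Theorem \ref{com amen the}: its two substantive steps, (v)$\Rightarrow$(vi) and (x)$\Rightarrow$(i), use Corollary \ref{inj lem} and Lemma \ref{unital lem} together with the standard-form identification $\pi(A)'=A^{**}$ to manufacture a ucp $G$-map $\ell^\infty(G)\to Z(A^{**})$, then feed it into Proposition \ref{G-exact-char-amenable} (this is where exactness enters), recovering strong amenability at the very end via Anantharaman-Delaroche's commutative-case theorem. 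You instead quote Theorem \ref{com amen the} twice as a black box for the two crossed-product clusters, bridge the two amenability levels with Proposition \ref{G-exact-char-amenable}, and splice the clusters together with your exactness-free lemma that C-amenability implies amenability for commutative $G$-algebras, proved by applying the definition of C-amenability directly to the standard form pair and using $\pi(C)'=C^{**}=Z(C^{**})$. That lemma is exactly the argument the paper defers to the non-exact setting (in the proof of Theorem \ref{com nex 1} the corresponding implication is justified as ``a consequence of C-amenability of a standard form representation''). So both proofs rest on the same two pillars --- the commutant collapse $\pi(A)'=A^{**}$ from Theorem \ref{hsf the} and exactness fed through $\ell^\infty(G)$ --- but distribute them differently: the paper's cycle is the most economical bookkeeping (ten implications for ten conditions) and keeps the $G$-injectivity machinery in the foreground, while your clustering makes transparent exactly where exactness is needed (only inside Theorem \ref{com amen the} and Proposition \ref{G-exact-char-amenable}) and isolates a reusable exactness-free statement that anticipates the paper's later non-exact analysis.
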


\begin{proof}
That \eqref{com boa} implies \eqref{com ada} is trivial, and that \eqref{com ada} implies \eqref{com ca} follows from Remark \ref{ca rem}.  That \eqref{com ca} implies \eqref{com m=r} is Proposition \ref{pro:CA=>max=red}, and \eqref{com m=r} implies \eqref{com m=i} is immediate.  Assume condition \eqref{com m=i}; we will show condition \eqref{com ** boa}.  Indeed, let $\pi:A\to\Bd(H)$ be the restriction of a standard form of $A^{**}$ as in Theorem \ref{hsf the} to $A$, so $\pi$ is covariant for some unitary representation $u$ on $H$, and $\pi(A)'$ identifies naturally with $A^{**}$.  Corollary \ref{inj lem} and Lemma \ref{unital lem}  give us an equivariant ucp map
$$
\phi:\ell^\infty(G)\to A^{**}.
$$
It follows from Proposition \ref{G-exact-char-amenable} and exactness of $G$ that $A^{**}$ is strongly amenable.

Continuing, \eqref{com ** boa} implies \eqref{com ** ada} is trivial and \eqref{com ** ada} implies \eqref{com ** ca} is Remark \ref{ca rem} again, while \eqref{com ** ca} implies \eqref{com ** m=r} is Proposition \ref{pro:CA=>max=red} again.  Also, \eqref{com ** m=r} implies \eqref{com ** m=i} is again immediate.  Finally, it remains to show that \eqref{com ** m=i} implies \eqref{com boa}.  For this, note that Lemma \ref{unital lem} applies to a 
standard form representation $(\pi,u)$
to give us an equivariant ucp map 
$$
\phi:\ell^\infty(G)\to A^{**}.
$$

Using Proposition \ref{G-exact-char-amenable} and exactness again, this implies that $A$ is amenable.  The proof is completed by appealing to \cite[Th\'{e}or\`{e}me 4.9]{Anantharaman-Delaroche:1987os} (and commutativity of $A$) to get from there back to strong amenability of $A$.
\end{proof}

Many of the equivalences of Theorem \ref{com ex the} were known before this paper: notably, Matsumura \cite{Matsumura:2012aa} showed that \eqref{com m=r} and \eqref{com ada} are equivalent when $A$ is unital, and the equivalence of \eqref{com boa} and \eqref{com ada} (and similarly of \eqref{com ** boa} and \eqref{com ** ada})  is due to Anantharaman-Delaroche \cite[Th\'{e}or\`{e}me 4.9]{Anantharaman-Delaroche:1987os}.  
Many of the other implications are either trivial, or probably known to at least some experts, so overall we certainly don't claim  much profundity!  

Nonetheless, we hope collecting these conditions in one place clarifies the theory somewhat.  It is perhaps also interesting that the only real way the proof goes beyond the classical results of Anantharaman-Delaroche from her seminal 1987 paper \cite{Anantharaman-Delaroche:1987os} are in the existence of standard forms \cite{Haagerup:1975xh} from 1979 and the adaptations in Corollary \ref{inj lem} and Lemma \ref{unital lem} of well-known tricks involving injectivity and multiplicative domains due originally to Lance \cite{Lance:1973aa} (see also the exposition in \cite[Section 3.6]{Brown:2008qy}), and dating to 1973.  Some of this already appears in Matsumura's work.

In the case where $G$ is not necessarily exact, some of Theorem \ref{com ex the} still holds; however, some of it becomes false, and other parts are unclear.

\begin{theorem}\label{com nex 1}
Let $G$ be a discrete group, and let $A$ be a commutative $G$-algebra.  Consider the following conditions:
\begin{enumerate}[(i)]
\item \label{com nex ** boa} $A^{**}$ is strongly amenable;
\item \label{com nex ** ada} $A^{**}$ is amenable;
\item \label{com nex ** c} $A^{**}$ is C-amenable;
\item \label{com nex ** m=r} $A^{**}\rtimes_{\max}G=A^{**}\rtimes_{\red} G$;
\item \label{com nex ** m=i} $A^{**}\rtimes_{\max}G=A^{**}\rtimes_{\inj} G$;
\item \label{com nex boa} $A$ is strongly amenable;
\item \label{com nex ada} $A$ is amenable;
\item \label{com nex c} $A$ is C-amenable.
\end{enumerate}
We have in general that 
\begin{equation}\label{strong nex}
\eqref{com nex ** boa} \Leftrightarrow \eqref{com nex ** ada} \Leftrightarrow \eqref{com nex ** c},
\end{equation}
that 
\begin{equation}\label{weak nex}
\eqref{com nex ** m=i} \Leftrightarrow \eqref{com nex boa} \Leftrightarrow \eqref{com nex ada} \Leftrightarrow \eqref{com nex c}
\end{equation}
and that the conditions in line \eqref{strong nex} imply condition \eqref{com nex ** m=r}, which in turn implies the conditions in line \eqref{weak nex}.

Moreover, if $A$ is unital, all the conditions listed above are equivalent, and if $A\not=0$, they force $G$ to be exact.
\end{theorem}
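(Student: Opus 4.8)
The plan is to run the skeleton of the proof of Theorem~\ref{com ex the}, but to excise every appeal to exactness---that is, to Proposition~\ref{G-exact-char-amenable}---by exploiting that $A^{**}$ is a \emph{commutative} von Neumann algebra, so that $A^{**}$ and $(A^{**})^{**}$ each coincide with their own centres. The two ``within-block'' chains come first. In both \eqref{strong nex} and \eqref{weak nex}, the equivalence of strong amenability and amenability is the commutative case recorded after Definition~\ref{def-amenable}, and amenability $\Rightarrow$ C-amenability is Remark~\ref{ca rem}. For the reverse C-amenability implications $\eqref{com nex ** c}\Rightarrow\eqref{com nex ** ada}$ and $\eqref{com nex c}\Rightarrow\eqref{com nex ada}$ I would feed a standard form into the definition: Theorem~\ref{hsf the} applied to $A^{**}$ (resp.\ to $A$) gives a covariant pair $(\pi,u)$ whose commutant is, by part~(iv), all of $(A^{**})^{**}$ (resp.\ $A^{**}$) with $\Ad_u$ acting as the canonical action. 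C-amenability of that single pair then returns a net of finitely supported positive-type functions into $Z((A^{**})^{**})$ (resp.\ $Z(A^{**})$) converging ultraweakly to $1$, which is verbatim the definition of amenability.

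The crux is to identify \eqref{com nex ** m=i} with amenability of $A$ without exactness, and here I would use two tricks, each powered by commutativity. For $\eqref{com nex ** m=i}\Rightarrow\eqref{com nex ada}$: by Corollary~\ref{inj lem} every covariant representation of $(A^{**},G)$ is $G$-injective, so Lemma~\ref{unital lem} applied to the standard form $(\pi,u)$ of $A$ (viewed as a covariant representation of $(A^{**},G)$) and to $C=\ell^\infty(G)$ yields a ucp $G$-map $\phi\colon\ell^\infty(G)\to\pi(A^{**})'=A^{**}$. Since $A^{**}$ is commutative its multiplication $m\colon A^{**}\otimes A^{**}\to A^{**}$ is an equivariant $*$-homomorphism, so $P:=m\circ(\id_{A^{**}}\otimes\phi)$ is an equivariant ucp projection $\ell^\infty(G,A^{**})\to A^{**}$ onto the constants; by the characterisation of amenability in \cite[Th\'eor\`eme~3.3]{Anantharaman-Delaroche:1987os} this makes $A$ amenable, with no exactness hypothesis. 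For the converse $\eqref{com nex ada}\Rightarrow\eqref{com nex ** m=i}$ I would prove the stronger statement that $A^{**}$ is $G$-injective and then invoke Lemma~\ref{g wep lem}. A commutative von Neumann algebra is injective non-equivariantly, so any $G$-embedding $A^{**}\subseteq B$ admits a ccp splitting $E_0\colon B\to A^{**}$; averaging it via $b\mapsto\big(g\mapsto\alpha_g(E_0(\beta_{g^{-1}}b))\big)$ (with $\alpha,\beta$ the actions on $A^{**},B$) and post-composing with $P$ produces a genuinely equivariant conditional expectation $B\to A^{**}$.

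It is worth stressing that this does not collapse \eqref{strong nex} into \eqref{weak nex}: $G$-injectivity of $A^{**}$ does \emph{not} force its amenability---already $\ell^\infty(G)=C_0(G)^{**}$ is $G$-injective for every $G$ yet amenable only when $G$ is exact---and this is precisely the gap between the two blocks. To thread \eqref{com nex ** m=r} between them, the downward step $\eqref{com nex ** ada}\Rightarrow\eqref{com nex ** m=r}$ is \cite[Proposition~4.8]{Anantharaman-Delaroche:1987os} applied to $A^{**}$, and $\eqref{com nex ** m=r}\Rightarrow\eqref{com nex ** m=i}$ is immediate from $\rtimes_\red\leq\rtimes_\inj\leq\rtimes_\max$.

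Finally, for the unital case I would argue that every listed condition implies \eqref{com nex ada}, i.e.\ that $A=C(X)$ is an amenable action on a compact space $X$; if moreover $A\neq0$ then $X\neq\emptyset$, and fixing any $x_0\in X$ gives an equivariant unital $*$-homomorphism $C(X)\to\ell^\infty(G)$, $f\mapsto(g\mapsto f(gx_0))$, so Lemma~\ref{ucp pass} transports strong amenability of $C(X)$ to $\ell^\infty(G)$ and Theorem~\ref{ozawa the} forces $G$ exact. With $G$ exact, Theorem~\ref{com ex the} renders all the conditions equivalent (the case $A=0$ being trivial), which proves the ``moreover'' clause. I expect the genuine difficulty to lie entirely in the exactness-free proof of $\eqref{com nex ** m=i}\Leftrightarrow\eqref{com nex ada}$, where the two commutativity tricks---the multiplicative bootstrap of $\phi$ to the projection $P$, and the averaging of $E_0$ into a $G$-equivariant expectation---are exactly what replace Proposition~\ref{G-exact-char-amenable}.
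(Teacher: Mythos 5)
Most of your proposal tracks the paper's own argument closely: the within-block equivalences via standard forms and Theorem \ref{hsf the}(iv), the threading of \eqref{com nex ** m=r} between the two blocks, the direction \eqref{com nex ada}$\Rightarrow$\eqref{com nex ** m=i} (amenability gives an equivariant expectation $\ell^\infty(G,A^{**})\to A^{**}$ by \cite[Th\'{e}or\`{e}me 3.3]{Anantharaman-Delaroche:1987os}, and your averaging of $E_0$ is exactly the proof that $\ell^\infty(G,A^{**})$ is $G$-injective, as in Example \ref{inj ex} -- provided the $P$ you post-compose with is the one supplied by Th\'{e}or\`{e}me 3.3, not by your earlier construction), and the unital/exactness endgame are all structurally sound. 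The genuine gap is in \eqref{com nex ** m=i}$\Rightarrow$\eqref{com nex ada}, at the ``multiplicative bootstrap''. The map $P:=m\circ(\id_{A^{**}}\otimes\phi)$ is only defined on the \emph{minimal $C^*$-tensor product} $\ell^\infty(G)\otimes A^{**}$, whereas the algebra $\ell^\infty(G,A^{**})$ appearing in \cite[Th\'{e}or\`{e}me 3.3]{Anantharaman-Delaroche:1987os} is the von Neumann tensor product $\ell^\infty(G)\,\bar{\otimes}\,A^{**}$, in which the span of elementary tensors is only ultraweakly dense. Since $\phi$ comes from Lemma \ref{unital lem}, i.e.\ from an Arveson-type extension, it is not normal, so $\id_{A^{**}}\otimes\phi$ has no extension to the von Neumann tensor product; and extending $P$ afterwards by Arveson (using injectivity of $A^{**}$) destroys equivariance. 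So you never produce the expectation on the algebra that Th\'{e}or\`{e}me 3.3 actually requires.

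This is not a repairable technicality, because the intermediate statement your bootstrap would prove --- for commutative $A$, the mere existence of a ucp $G$-map $\phi\colon\ell^\infty(G)\to Z(A^{**})$ forces $A$ to be amenable --- is false for non-exact $G$. Take $A=\ell^\infty(G)$: the canonical inclusion $\ell^\infty(G)\into\ell^\infty(G)^{**}=A^{**}=Z(A^{**})$ is an equivariant ucp map, yet $\ell^\infty(G)$ is an amenable $G$-algebra if and only if $G$ is exact (Theorem \ref{ozawa the} together with \cite[Th\'{e}or\`{e}me 4.9]{Anantharaman-Delaroche:1987os}). Note that your construction applied to this $A$ does yield a perfectly good equivariant expectation $\ell^\infty(G)\otimes_{\min}A^{**}\to A^{**}$ onto the constants, which confirms that the minimal tensor product is genuinely too small; this is exactly why Proposition \ref{G-exact-char-amenable}(ii) carries an exactness hypothesis. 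The paper's proof of \eqref{com nex ** m=i}$\Rightarrow$\eqref{com nex ada} never compresses down to $\ell^\infty(G)$: Corollary \ref{inj lem} makes the standard-form covariant pair $(\pi,u)$ of $A^{**}$ a $G$-injective representation, and that $G$-injectivity is applied directly to the $G$-embedding $A^{**}\subseteq\ell^\infty(G,A^{**})$ of the \emph{full} von Neumann tensor product, giving a covariant ucp extension $\widetilde{\pi}\colon\ell^\infty(G,A^{**})\to\Bd(H)$ of $\pi$; the multiplicative-domain argument plus commutativity ($\pi(A^{**})'=\pi(A^{**})$ in standard form) then shows $\widetilde{\pi}$ itself is the required equivariant conditional expectation on all of $\ell^\infty(G,A^{**})$. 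Keeping the extension on the big algebra from the start is the whole point: once you have passed to $\ell^\infty(G)$, you cannot climb back up without exactness.
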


\begin{proof}
The implication \eqref{com nex ** boa} $\Rightarrow$ \eqref{com nex ** ada} is trivial, and the converse is part of \cite[Th\'{e}or\`{e}me 4.9]{Anantharaman-Delaroche:1987os}.  The implication \eqref{com nex ** ada} $\Rightarrow$ \eqref{com nex ** c} is likewise trivial, and the converse is a consequence of C-amenability of a standard form representation (see Theorem \ref{hsf the}), and the fact that the bidual of a commutative C*-algebra is again commutative.  Thus we have the equivalences in line \eqref{strong nex}.

The implication \eqref{com nex ** c} $\Rightarrow$ \eqref{com nex ** m=r} is Proposition \ref{pro:CA=>max=red}, and \eqref{com nex ** m=r} $\Rightarrow$ \eqref{com nex ** m=i} is trivial.   

Look finally at the equivalences in line \eqref{weak nex}.  First note that condition \eqref{com nex ** m=i} implies via Corollary \ref{inj lem} that a standard form representation (Theorem \ref{hsf the}) $\pi$ of $A^{**}$ is an injective representation.  In particular, we can fill in the dashed line below with a ucp $G$-map making the diagram commute
$$
\xymatrix{ \ell^\infty(G,A^{**}) \ar@{-->}[dr]^-{\widetilde{\pi}} & \\ A^{**} \ar[u] \ar[r]^-\pi & \Bd(H) }
$$
where the vertical arrow is the canonical inclusion of $A^{**}$ in $\ell^\infty(G,A^{**})$ as constant functions.  As $A^{**}$ is in the multiplicative domain of $\widetilde{\pi}$, it follows that $\widetilde{\pi}(\ell^\infty(G,A^{**}))$ commutes with $\pi(A^{**})$; however, as $\pi$ is a standard form, this implies that the image of $\widetilde{\pi}$ is exactly $\pi(A^{**})\cong A^{**}$.  In other words, $\widetilde{\pi}$ identifies with an equivariant conditional expectation $\ell^\infty(G,A^{**})\to A^{**}$ that splits the constant inclusion.  The existence of such a conditional expectation is equivalent to amenability by \cite[Th\'{e}or\`{e}me 3.3]{Anantharaman-Delaroche:1987os}.  Conversely, assuming amenability, we have such an equivariant conditional expectation.  As $A$ is commutative, $A^{**}$ is injective, whence $\ell^\infty(G,A^{**})$ is $G$-injective by Example \ref{inj ex}.  Hence $A^{**}$ is also $G$-injective, whence we get condition \eqref{com nex ** m=i} by Lemma \ref{g wep lem}.

To complete the equivalences in line \eqref{weak nex}, note that the implications from \eqref{com nex boa} to \eqref{com nex ada}, and from \eqref{com nex ada} to \eqref{com nex c} are straightforward.  On the other hand, we have implications from \eqref{com nex c} to \eqref{com nex ada} by C-amenability of a standard form representation, and from \eqref{com nex ada} to \eqref{com nex boa} by part of \cite[Th\'{e}or\`{e}me 4.9]{Anantharaman-Delaroche:1987os}.

In the unital case, the implication from \eqref{com nex boa} to \eqref{com nex ** boa} is trivial, so we are done.
\end{proof}

We also get the following observation in general.

\begin{theorem}\label{com nex 2} 
Let $A$ be a commutative $G$-algebra.  The following are equivalent:
\begin{enumerate}[(i)]
\item \label{com nex m=i} $A\rtimes_{\max}G=A\rtimes_{\inj}G$;
\item \label{com nex wep} $A$ has the $G$-WEP.
\end{enumerate}
\end{theorem}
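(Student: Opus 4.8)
The plan is to prove the two implications separately. The direction (ii)$\Rightarrow$(i) is immediate and holds for arbitrary (not necessarily commutative) $A$: it is exactly Lemma \ref{g wep lem}, which says that the $G$-WEP forces $A\rtimes_{\max}G=A\rtimes_{\inj}G$. So all the real content lies in (i)$\Rightarrow$(ii), and commutativity will enter at exactly one point.

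For (i)$\Rightarrow$(ii), I would first translate the hypothesis using Corollary \ref{inj lem}: the equality $A\rtimes_{\max}G=A\rtimes_{\inj}G$ is equivalent to the statement that \emph{every} covariant representation of $(A,G)$ is $G$-injective. I would then apply this to a standard form: by Theorem \ref{hsf the} fix a standard form representation $\pi\colon A^{**}\to\Bd(H)$ together with $u\colon G\to\U(H)$, so that the restriction $(\pi|_A,u)$ is a covariant representation of $(A,G)$ and is therefore $G$-injective. Since $\pi$ is faithful and normal, $\pi(A^{**})$ is a von Neumann subalgebra of $\Bd(H)$ isomorphic to $A^{**}$, and under this identification $\pi|_A$ is precisely the canonical embedding $A\into A^{**}$.

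The key observation, and the only place where I use commutativity, is that $A^{**}\cong\pi(A^{**})$ is an \emph{abelian} von Neumann algebra, hence injective; consequently there is a conditional expectation $E\colon\Bd(H)\to\pi(A^{**})$, which is ccp and restricts to the identity on $\pi(A^{**})$. Now, given an arbitrary $G$-embedding $\iota\colon A\into B$, the $G$-injectivity of $(\pi|_A,u)$ supplies a ccp map $\sigma\colon B\to\Bd(H)$ with $\sigma\circ\iota=\pi|_A$. Setting $P:=E\circ\sigma\colon B\to\pi(A^{**})\cong A^{**}$ gives a ccp map, and since $\pi(A)\subseteq\pi(A^{**})$ and $E$ fixes $\pi(A^{**})$ pointwise, we get $P\circ\iota=E\circ\pi|_A=\pi|_A$, which under the identification $\pi(A^{**})\cong A^{**}$ is the canonical embedding $A\into A^{**}$. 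Thus $A$ has the $G$-WEP.

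I do not expect any hard computation here; the single genuinely necessary ingredient is the injectivity of the abelian von Neumann algebra $A^{**}$, which produces the expectation $E$ and which is precisely what fails in the noncommutative setting — this explains why the statement is restricted to commutative $A$. The only point demanding a little care is to note that the $G$-WEP asks only for a \emph{ccp} splitting $P$ with $P\circ\iota$ the canonical embedding, with no equivariance requirement on $P$; hence it is harmless that neither $E$ nor $\sigma$ need be $G$-maps.
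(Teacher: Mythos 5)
Your direction (ii)$\Rightarrow$(i) is fine, but your proof of (i)$\Rightarrow$(ii) has a genuine gap, located exactly at the point you flag as harmless. The $G$-WEP, as it is used throughout this paper, requires the splitting $P\colon B\to A^{**}$ to be a ccp \emph{$G$-map}. Admittedly Definition \ref{inj def} omits the word ``equivariant'' in its wording, but Remark \ref{inj def 2} identifies the $G$-WEP with relative weak $G$-injectivity of every $G$-embedding, and Definition \ref{rel inj} explicitly demands a ccp $G$-map; this is also how the $G$-WEP is invoked in the proofs of Lemma \ref{pro:G-WEP=>WEP} and Lemma \ref{lem:G-WEP=>UCP}. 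More to the point, Lemma \ref{g wep lem} --- which you yourself rely on for (ii)$\Rightarrow$(i) --- is \cite[Proposition 3.12]{Buss:2018nm} and concerns the equivariant notion: the mechanism behind it is Theorem \ref{inj=max}(iii) together with Lemma \ref{int lem}, and integrating a ccp extension over the crossed product requires the covariance relation. So you cannot consistently adopt the non-equivariant reading in (i)$\Rightarrow$(ii) while quoting Lemma \ref{g wep lem} for (ii)$\Rightarrow$(i): under your reading, the latter implication is no longer justified; under the correct reading, your construction of $P$ does not prove the former.

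Concretely, with the equivariant reading your argument breaks as follows: the map $\sigma\colon B\to\Bd(H)$ supplied by $G$-injectivity of the covariant pair is equivariant for $\Ad_u$ (Definition \ref{g inj rep}), but the conditional expectation $E\colon\Bd(H)\to\pi(A^{**})$ that you extract from injectivity of the abelian von Neumann algebra $\pi(A^{**})$ carries no equivariance whatsoever, so $P=E\circ\sigma$ need not be a $G$-map. The paper's proof circumvents precisely this: it applies Corollary \ref{inj lem} not to an arbitrary $B$ but to the equivariant embedding of $A$ as constant functions in $\ell^\infty(G,A^{**})$, obtaining a ccp $G$-map $\phi\colon\ell^\infty(G,A^{**})\to\Bd(H)$ extending $\pi$; the multiplicative domain argument, commutativity of $\ell^\infty(G,A^{**})$, and the standard-form identity $\pi(A)'=A^{**}$ from Theorem \ref{hsf the} force $\phi$ to land in $A^{**}$, so $\phi$ is an \emph{equivariant} expectation-type map. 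Then, for an arbitrary $G$-embedding $A\into B$, one extends $A\to\ell^\infty(G,A^{**})$ to a ccp $G$-map on $B$ using $G$-injectivity of $\ell^\infty(G,A^{**})$ (Example \ref{inj ex}, which applies because $A^{**}$ is injective --- the second place commutativity enters), and composes with $\phi$; every map in this composite is equivariant. To salvage your argument you would need an equivariant replacement for $E$, and injectivity of the abelian von Neumann algebra $A^{**}$ alone does not produce one.
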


\begin{proof}
The implication from \eqref{com nex m=i} to \eqref{com nex wep} follows as if $A\to \ell^\infty(G,A^{**})$ is the equivariant inclusion of $A$ as constant functions from $G$ to $A$ and $\pi:A\to A^{**}\subseteq \Bd(H)$ is the restriction of a standard form of $A^{**}$ from Theorem \ref{hsf the} to $A$, then Corollary \ref{inj lem} gives us a commutative diagram
$$
\xymatrix{ \ell^\infty(G,A^{**}) \ar[dr]^-\phi & \\ A \ar[u]\ar[r]^-\pi & \Bd(H) }
$$
with $\phi$ a ccp $G$-map.  As $A$ is in the multiplicative domain of $\phi$ and the algebra $\ell^\infty(G,A^{**})$ is commutative, $\phi$ takes image in $\pi(A)'=A^{**}$.  In other words we have factored the canonical inclusion $A\into A^{**}$ through the $G$-injective algebra (Example \ref{inj ex}) $\ell^\infty(G,A^{**})$, which easily implies the $G$-WEP.

The converse is Lemma \ref{g wep lem}.
\end{proof}

\begin{remark}\label{com nex 2 rem}
Any of the conditions in Theorem \ref{com nex 1} imply that $A\rtimes_{\max}G=A\rtimes_r G$: indeed, they all apply C-amenability of $A$ by that theorem, and this implies $A\rtimes_{\max}G=A\rtimes_r G$ by Proposition \ref{pro:CA=>max=red}.  Moreover, the condition that $A\rtimes_{\max}G=A\rtimes_r G$ trivially implies $A\rtimes_{\max}G=A\rtimes_{\inj}G$ for any $G$-algebra $A$.  Summarizing, 
$$
\text{(Theorem \ref{com nex 1} conditions)}~\Rightarrow ~A\rtimes_{\max}G=A\rtimes_r G~\Rightarrow ~\text{(Theorem \ref{com nex 2} conditions)}.
$$
On the other hand, the conditions in Theorem \ref{com nex 2} are true for $A=\ell^\infty(G)$ and any $G$, while those for Theorem \ref{com nex 1} are all false for any non-exact $G$ and $A=\ell^\infty(G)$, so we do not have equivalence of the conditions in Theorems \ref{com nex 1} and \ref{com nex 2}.  The exact relationship between the conditions in Theorem \ref{com nex 1} and \ref{com nex 2} and the condition $A\rtimes_{\max}G=A\rtimes_r G$ is not at all clear in general.
\end{remark}

\begin{remark}\label{com nex 1 rem}
The conditions in Theorem \ref{com nex 1} are not all equivalent in the non-exact case without the assumption that $A$ is unital: $A=C_0(G)$ satisfies the conditions in line \eqref{weak nex} for any $G$, but never satisfies the conditions in line \eqref{strong nex} when $G$ is not exact.  The $G$-algebra $A=C_0(G)$ is also known to fail \eqref{com nex ** m=r} for at least some non-exact groups (see \cite[Lemma 4.7]{Buss:2018nm}), so \eqref{com nex ** m=r} cannot always be equivalent to the conditions in line \eqref{weak nex}.  It is possible that condition \eqref{com nex ** m=r} is always equivalent to those in line \eqref{strong nex} even for non-unital $A$; however, it also seems quite plausible that \eqref{com nex ** m=r} holds for some non-exact groups and $A=C_0(G)$.  This issue seems quite open at the moment.
\end{remark}

\begin{remark}\label{com nex 1 rem 2}
Note that condition \eqref{com nex ** boa} in Theorem \ref{com nex 1} is never satisfied for any $A$ when $G$ is not exact: indeed, it implies that the action of $G$ on the spectrum of $A^{**}$, a compact space, is amenable, which is well-known to imply exactness.  Hence none of the conditions in Theorem \ref{com nex 1} can be satisfied when $A$ is unital and $G$ is not exact; the theorem still seems somewhat interesting in this case as simply saying that none of the conditions are possible.  The bottom four conditions are possible for non-exact $G$ and non-unital $A$: all happen for $A=C_0(G)$.  It is not clear whether or not condition \eqref{com nex ** m=r} can ever happen for non-exact $G$ and non-unital $A$: as already remarked, it seems plausible that this can happen for some $G$ and $A=C_0(G)$.
\end{remark}

We now move on to the noncommutative case.  

\begin{theorem}\label{nc ex the}
Let $G$ be exact and $A$ a $G$-algebra.  Let $A^{\op}$ be the opposite algebra of $A$, and let $M$ be the von Neumann algebra generated by $\pi(A)$ and $\pi^{\op}(A^{\op})$ in standard forms $\pi$ and $\pi^{\op}$ of $A^{**}$ and $(A^{\op})^{**}$ (see Theorem \ref{hsf the}).  The following are equivalent:
\begin{enumerate}[(i)]
\item \label{nc ada} $A$ is amenable ;
\item \label{nc c} $A\otimes_{\max} A^{\op}$ is C-amenable;
\item \label{nc m=r} $(A\otimes_{\max}A^{\op})\rtimes_{\max}G=(A\otimes_{\max}A^{\op})\rtimes_{\red}G$;
\item \label{nc m=i} $(A\otimes_{\max}A^{\op})\rtimes_{\max}G=(A\otimes_{\max}A^{\op})\rtimes_{\inj}G$;
\item \label{nc ** boa} $A^{**}$ is strongly amenable;
\item \label{nc m boa} $M$ is strongly amenable;
\item \label{nc m ada} $M$ is amenable;
\item \label{nc m c} $M$ is C-amenable;
\item \label{nc ** m=r} $M\rtimes_{\max}G=M\rtimes_{\red} G$;
\item \label{nc ** m=i} $M\rtimes_{\max}G=M\rtimes_{\inj} G$.
\end{enumerate}
\end{theorem}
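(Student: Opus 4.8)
The plan is to route every condition through the single statement
$$(*)\qquad \text{there is a ucp } G\text{-map } \ell^\infty(G)\to Z(A^{**}),$$
which by Proposition \ref{G-exact-char-amenable}(ii) is equivalent to \eqref{nc ada}, and which is equivalent to \eqref{nc ** boa} by the final consequence of that proposition. The geometric input is the standard form of Theorem \ref{hsf the}. First I would record the von Neumann-algebraic bookkeeping: since $\pi$ is normal, faithful and unital we have $\pi(A)''=\pi(A^{**})$ and $\pi(A^{**})'=\pi^{\op}((A^{\op})^{**})$, so $M=\pi(A^{**})\vee \pi(A^{**})'$. Taking commutants gives $M'=\pi(A^{**})\cap\pi(A^{**})'=Z(A^{**})$, and hence $Z(M)=M\cap M'=Z(A^{**})$ (using $Z(A^{**})\subseteq \pi(A^{**})\subseteq M$). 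I would also introduce $\rho:=\pi\times\pi^{\op}\colon A\otimes_{\max}A^{\op}\to\Bd(H)$, $a\otimes b\mapsto \pi(a)\pi^{\op}(b)$ (legitimate by the universal property of $\otimes_{\max}$, as the two images commute), note that $\rho$ is nondegenerate (since $\pi,\pi^{\op}$ are unital) and covariant for $u$, and compute $\rho(A\otimes_{\max}A^{\op})'=\pi(A)'\cap\pi^{\op}(A^{\op})'=Z(A^{**})$. Throughout, $M$ carries the action $\Ad_u$, and both $\rho$ and $\id_M$ are covariant for $u$.

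With these identifications, the block \eqref{nc ** boa}, \eqref{nc m boa}--\eqref{nc ** m=i} is a short cycle. From $(*)$, viewing the target as $Z(M)=Z\M(M)$, Proposition \ref{G-exact-char-amenable}(i) applied to the $G$-algebra $M$ gives \eqref{nc m boa}; then \eqref{nc m boa}$\Rightarrow$\eqref{nc m ada} is the trivial implication strong amenability $\Rightarrow$ amenability, and \eqref{nc m ada}$\Rightarrow$\eqref{nc m c} is Remark \ref{ca rem}. Since $G$ is exact, Theorem \ref{com amen the} applied to the $G$-algebra $M$ gives \eqref{nc m c}$\Leftrightarrow$\eqref{nc ** m=r}$\Leftrightarrow$\eqref{nc ** m=i}. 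To close the loop I assume \eqref{nc ** m=i}: by Corollary \ref{inj lem} every covariant representation of $(M,G)$ is $G$-injective, in particular the nondegenerate faithful pair $(\id_M,u)$; Lemma \ref{unital lem} with $C=\ell^\infty(G)$ then produces a ucp $G$-map $\ell^\infty(G)\to \id_M(M)'=M'=Z(A^{**})$, which is exactly $(*)$. Thus \eqref{nc ada}, \eqref{nc ** boa}, \eqref{nc m boa}, \eqref{nc m ada}, \eqref{nc m c}, \eqref{nc ** m=r}, \eqref{nc ** m=i} are all equivalent.

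It remains to fold in the tensor conditions \eqref{nc c}, \eqref{nc m=r}, \eqref{nc m=i}; here Theorem \ref{com amen the} applied to $B:=A\otimes_{\max}A^{\op}$ already yields \eqref{nc c}$\Leftrightarrow$\eqref{nc m=r}$\Leftrightarrow$\eqref{nc m=i}, so I need only one implication in and one out. For \eqref{nc ada}$\Rightarrow$\eqref{nc m=r} I would fix a nondegenerate faithful covariant pair $(\sigma,v)$ whose integrated form is faithful on $B\rtimes_{\max}G$. Restricting $\sigma$ to $A\otimes 1$ and extending normally to $\bar\sigma_1\colon A^{**}\to\Bd(K)$ (unital by nondegeneracy), one checks that $\bar\sigma_1(Z(A^{**}))$ commutes with both $\sigma(A\otimes 1)$ and $\sigma(1\otimes A^{\op})$, hence lies in $\sigma(B)'$; this gives an equivariant unital normal $*$-homomorphism $Z(A^{**})\to\sigma(B)'$. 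Composing the map from $(*)$ yields a ucp $G$-map $\ell^\infty(G)\to\sigma(B)'$, and---exactly as in the last step of the proof of Theorem \ref{com amen the}---postcomposing a net witnessing strong amenability of $\ell^\infty(G)$ shows $(\sigma,v)$ is C-amenable; Proposition \ref{pro:CA=>max=red} then delivers \eqref{nc m=r}. Conversely, assuming \eqref{nc m=i}, Corollary \ref{inj lem} makes $\rho$ a $G$-injective covariant representation, so Lemma \ref{unital lem} with $C=\ell^\infty(G)$ produces a ucp $G$-map $\ell^\infty(G)\to\rho(B)'=Z(A^{**})$, i.e.\ $(*)$. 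This completes all equivalences.

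The main obstacle---or at least the part demanding the most care---is the interplay between the standard-form commutant identities and (non)degeneracy: one must verify that the normal extensions of the restricted representations are unital (so the composed maps are genuinely unital, hence ucp) and that centres land inside the relevant commutants. Choosing the faithful pairs $(\sigma,v)$ and $(\id_M,u)$ nondegenerate is precisely what makes Lemma \ref{unital lem} and the centre-into-commutant argument go through; everything else is an assembly of Proposition \ref{G-exact-char-amenable}, Theorem \ref{com amen the}, Corollary \ref{inj lem} and Lemma \ref{unital lem}.
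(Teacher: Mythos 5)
Your proposal is correct and uses essentially the same approach as the paper: the same standard-form commutant identities ($\rho(A\otimes_{\max}A^{\op})'\cong M'\cong Z(A^{**})$), the same combination of Corollary \ref{inj lem} and Lemma \ref{unital lem} to extract ucp $G$-maps $\ell^\infty(G)\to Z(A^{**})$, Proposition \ref{G-exact-char-amenable} to pass between such maps and (strong) amenability, and Proposition \ref{pro:CA=>max=red} for weak containment. The only difference is bookkeeping: the paper proves the single cycle \eqref{nc ada}$\Rightarrow$\eqref{nc c}$\Rightarrow\cdots\Rightarrow$\eqref{nc ** m=i}$\Rightarrow$\eqref{nc ada}, whereas you route every condition through the existence of a ucp $G$-map $\ell^\infty(G)\to Z(A^{**})$ and quote Theorem \ref{com amen the} for the internal equivalences of \eqref{nc c}--\eqref{nc m=i} and \eqref{nc m c}--\eqref{nc ** m=i} --- a harmless repackaging, since that theorem is itself assembled from the same ingredients.
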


\begin{proof}
Assume first that $A$ is amenable, let $B=A\otimes_{\max} A^{\op}$, and let $(\rho,v):(B,G)\to \Bd(H)$ be any covariant pair for $B$.  Then $\rho$ `restricts' to representations of $A$ and $A^{\op}$ as in \cite[Theorem 3.2.6]{Brown:2008qy}, which we also denote $\rho$.  Extending $\rho$ to $A^{**}$, we have that $\rho(Z(A^{**}))$ commutes with both $\rho(A)$ and $\rho(A^{\op})$, and therefore $\rho(Z(A^{**}))\subseteq \rho(B)'$.  It follows from this that amenability of $A$ implies C-amenability of $A\otimes_{\max}A^{\op}$, so we get \eqref{nc ada} implies \eqref{nc c}.  

The implication from \eqref{nc c} to \eqref{nc m=r} is Proposition \ref{pro:CA=>max=red}, and from \eqref{nc m=r} to \eqref{nc m=i} is trivial.  Assuming \eqref{nc m=i}, let $\pi:A\to\Bd(H)$ be the restriction of a standard form (see Theorem \ref{hsf the}) of $A^{**}$ to $A$.  Thanks to Theorem \ref{hsf the} and the universal property of the maximal tensor product we obtain the covariant representation $(\sigma=\pi\otimes_{\max} \pi^{\op}, u)$, of $A\otimes_{\max}A^{\op}$.  Lemma \ref{unital lem} now gives us a ucp $G$-map $\phi:\ell^\infty(G)\to \sigma(A\otimes_{\max}A^{\op})'$.  However, 
\begin{equation}\label{com op}
\sigma(A\otimes_{\max}A^{\op})'=\pi(A)'\cap \pi^{\op}(A)'=\pi(A)'\cap \pi(A)''=Z(\pi(A)')\cong Z(A^{**}).
\end{equation}
As $G$ is exact, the existence of an equivariant ucp $G$-map $\phi:\ell^\infty(G)\to Z(A^{**})$ implies that $A^{**}$ is strongly amenable  by Proposition \ref{G-exact-char-amenable}.  Hence we have shown that \eqref{nc m=i} implies \eqref{nc ** boa}.  

The implication from \eqref{nc ** boa} to \eqref{nc m boa} follows as a standard form $\pi:A^{**}\to \Bd(H)$ restricts to a unital equivariant $*$-homomorphism $\pi:Z(A^{**})\to Z(M)$.  The implications from \eqref{nc m boa} to \eqref{nc m ada} and \eqref{nc m ada} to \eqref{nc m c} are straightforward, and that from \eqref{nc m c} to \eqref{nc ** m=r} is Proposition \ref{pro:CA=>max=red} again.  

The implication from \eqref{nc ** m=r} to \eqref{nc ** m=i} is trivial, so it remains to get back from \eqref{nc ** m=i} to \eqref{nc ada}.  Indeed, Lemma \ref{unital lem} gives an equivariant ucp map $\phi:\ell^\infty(G)\to M'$, and analogously to line \eqref{com op}, $M'\cong Z(A^{**})$.  Proposition \ref{G-exact-char-amenable} again completes the proof.
\end{proof}

\begin{remark}\label{nc inj rem}
The conditions in Theorem \ref{nc ex the} are not equivalent to strong amenability of $A$.  This follows from the properties of Suzuki's examples \cite{Suzuki:2018qo} as discussed in Section \ref{suzuki sec}.
\end{remark}

When $G$ is not necessarily exact and $A$ is a general $C^*$-algebra, rather little of Theorems \ref{com nex 1} and Theorems \ref{com nex 2} seem directly recoverable: some implications do still hold, of course, as the interested reader can extract from the proof of Theorem \ref{nc ex the} above.  

Here are at least some implications that hold in general.

\begin{proposition}\label{ada vs c}
Let $A$ be a $G$-algebra.  The following are equivalent:
\begin{enumerate}[(i)]
\item \label{am} $A$ is amenable;
\item \label{tens con} $A\otimes_\max B$ is amenable  for every $G$-algebra $B$;
\item $A\otimes_\max A^{\op}$ is amenable;
\item $A\otimes_\max A^{\op}$ is C-amenable.
\end{enumerate}
\end{proposition}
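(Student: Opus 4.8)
The plan is to establish the cyclic chain $(i)\Rightarrow(ii)\Rightarrow(iii)\Rightarrow(iv)\Rightarrow(i)$. Two of the links are immediate: $(ii)\Rightarrow(iii)$ is the special case $B=A^{\op}$, and $(iii)\Rightarrow(iv)$ is Remark \ref{ca rem}, which records that amenability implies C-amenability. All of the real content sits in the two outer links, so I describe those.

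For $(i)\Rightarrow(ii)$ the idea is to transport a net witnessing amenability of $A$ forward along a canonical central embedding. Fix a $G$-algebra $B$ and consider the nondegenerate equivariant $*$-homomorphism $j_A\colon A\to\M(A\otimes_\max B)$, $a\mapsto a\otimes 1$; composing with $\M(A\otimes_\max B)\into(A\otimes_\max B)^{**}$ and extending to the bidual produces a normal equivariant $*$-homomorphism $\Phi\colon A^{**}\to(A\otimes_\max B)^{**}$. I would first check that $\Phi$ is unital (if $(e_i)$ is an approximate unit for $A$ then $e_i\otimes 1\to 1$ strictly, hence ultraweakly, while $e_i\to 1$ ultraweakly in $A^{**}$ and $\Phi$ is normal) and that it carries $Z(A^{**})$ into $Z((A\otimes_\max B)^{**})$: for central $z$ the element $\Phi(z)$ commutes with $j_A(A)$ because $z$ is central, and with $1\otimes B$ because $j_A(A)$ and $1\otimes B$ commute and commutation survives the ultraweak closure, while these two subalgebras generate $(A\otimes_\max B)^{**}$. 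Then, given a net $(\theta_i\colon G\to Z(A^{**}))$ as in Definition \ref{def-amenable}, the net $\Phi\circ\theta_i\colon G\to Z((A\otimes_\max B)^{**})$ is finitely supported, is of positive type because $\Phi$ is an equivariant $*$-homomorphism, satisfies $(\Phi\circ\theta_i)(e)\le\Phi(1)=1$, and converges ultraweakly to $1$ by normality of $\Phi$; hence $A\otimes_\max B$ is amenable.

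The substantive remaining link is $(iv)\Rightarrow(i)$, where the standard form lets one read off an amenability net directly. Take standard forms $\pi$ of $A^{**}$ and $\pi^{\op}$ of $(A^{\op})^{**}$ on a common Hilbert space $H$ with unitary $u$ as in Theorem \ref{hsf the}; since $\pi(A)$ and $\pi^{\op}(A^{\op})$ commute, the universal property of $\otimes_\max$ yields a covariant pair $(\sigma,u)=(\pi\otimes_\max\pi^{\op},u)$ for $A\otimes_\max A^{\op}$. Exactly as in line \eqref{com op} of the proof of Theorem \ref{nc ex the}, the standard form relations give $\sigma(A\otimes_\max A^{\op})'\cong Z(A^{**})$, and by covariance of $(\pi,u)$ the action $\Ad_u$ on this commutant corresponds to the canonical action on $Z(A^{**})$. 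Now C-amenability of $A\otimes_\max A^{\op}$ applies in particular to the pair $(\sigma,u)$, producing a net of positive type functions $G\to\sigma(A\otimes_\max A^{\op})'$ with the three properties of Definition \ref{com amen}; transported through the isomorphism with $Z(A^{**})$, this net is precisely one witnessing amenability of $A$ in the sense of Definition \ref{def-amenable}.

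I expect the only genuine subtlety to be the bookkeeping in $(iv)\Rightarrow(i)$ --- namely checking that the $\Ad_u$-positive-type condition on the commutant becomes the positive-type condition for the canonical action on $Z(A^{**})$ --- together with the routine verification in $(i)\Rightarrow(ii)$ that $\Phi$ is unital and central. The conceptually noteworthy feature, in contrast with Theorem \ref{nc ex the}, is that $(iv)\Rightarrow(i)$ needs no exactness hypothesis on $G$: C-amenability of the single well-chosen pair $(\sigma,u)$ already delivers a net valued in $Z(A^{**})$, so there is no need to route through a ucp map out of $\ell^\infty(G)$.
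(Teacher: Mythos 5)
Your proposal is correct and takes essentially the same route as the paper: the same cycle (i)$\Rightarrow$(ii)$\Rightarrow$(iii)$\Rightarrow$(iv)$\Rightarrow$(i), with (i)$\Rightarrow$(ii) done by pushing the amenability data forward along the canonical central $G$-map $Z(A^{**})\to Z((A\otimes_\max B)^{**})$, and (iv)$\Rightarrow$(i) via the standard-form representation $\sigma=\pi\otimes_\max\pi^{\op}$, whose commutant is identified with $Z(A^{**})$. The only cosmetic difference is that you transport the positive-type functions $\theta_i$ directly through the normal $*$-homomorphism $\Phi$, whereas the paper phrases this step through the $\ell^2$-reformulation of Lemma \ref{l2 amen}; the two are interchangeable.
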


\begin{proof}
The implication from (i) to (ii) is well-known, but maybe not explicit in the literature.
It can be proved using Lemma \ref{l2 amen} to get a net $(\xi_i\colon G\to Z(A^{**}))_{i\in I}$ with the properties states there, and the fact that there is a canonical $G$-embedding $A^{**}\into (A\otimes_\max B)^{**}$ inducing a $G$-embedding $Z(A^{**})\to Z((A\otimes_\max B)^{**})$.
The implication  from (ii) to (iii) is trivial, and (iii) implies (iv) is Remark \ref{ca rem}.  Finally, if (iv) holds, we use the same technique from the previous proofs by making use of the standard representation $\pi\colon A^{**}\into \Bd(H)$ to build a representation $$
\sigma:=\pi\otimes_\max \pi^{\op}\colon A\otimes_\max A^{\op}\to \Bd(H)
$$
with
$$
\sigma(A\otimes_\max A^\op)'=\pi(A)'\cap \pi(A)''\cong Z(A^{**}).
$$
Then C-amenability for $A\otimes_\max A^\op$ implies the existence of an approximative net $(\xi_i\colon G\to \rho(A\otimes_\max A^\op)'\cong Z(A^{**}))$ giving amenability for $A$.
\end{proof}

\begin{remark}\label{ca tens}
It would be interesting to know whether C-amenability passes to (maximal) tensor products in the sense of the implication \eqref{am} $\Rightarrow$  \eqref{tens con} from Proposition \ref{ada vs c} above, even for trivial actions.  Indeed, it would then follow if $A$ is a C-amenable and nuclear $G$-algebra, and $B$ is any $C^*$-algebra (with trivial $G$-action) then 
\begin{align*}
(A\rtimes_{r}G)\otimes_{\max} B &  = (A\rtimes_{\max}G)\otimes_{\max} B=(A\otimes_{\max} B)\rtimes_{\max} G = (A\otimes B)\rtimes_{\max} G  \\ & = (A\otimes B)\rtimes _r G =(A\rtimes_r G)\otimes B,
\end{align*}
where we have used Proposition \ref{pro:CA=>max=red} (twice), nuclearity of $A$ to replace the maximal tensor product with the spatial one, and standard facts about commuting tensor products with trivial $G$-algebras with crossed products.  This implies that $A\rtimes_r G$ is nuclear.  However, in \cite[Th\'{e}or\`{e}me 4.5]{Anantharaman-Delaroche:1987os}, Anantharaman-Delaroche proves that this is equivalent to amenability of $A$.  

To summarize, if we knew the implication 
\eqref{am} $\Rightarrow$  \eqref{tens con} of Proposition \ref{ada vs c} also held for C-amenability, we could conclude that C-amenability and amenability were equivalent for all actions on nuclear $C^*$-algebras (and therefore also that amenability was equivalent to $A\rtimes_{\max}G=A\rtimes_r G$ in the nuclear case).
\end{remark}

\section{Can non-exact groups admit amenable actions on unital $C^*$-algebras?}\label{exact sec}

It is well-known that a group admits a strongly amenable action on a unital commutative $G$-algebra if and only if the group is exact.  From this, it is clear that a non-exact group cannot admit a strongly amenable  action on a unital $G$-algebra: indeed, the action on the unital commutative $C^*$-algebra $Z(A)$ is then also strongly amenable.  For commutative $G$-algebras, strong amenability and amenability are equivalent by  \cite[Th\'{e}or\`{e}me 4.9]{Anantharaman-Delaroche:1987os}, and therefore a non-exact group cannot admit an amenable  action on a unital commutative $C^*$-algebra either.

However, as discussed in Section \ref{suzuki sec}, Suzuki's examples show that amenability and strong amenability are not equivalent for unital noncommutative (even nuclear) $G$-algebras.  It is therefore natural to ask whether a non-exact group can admit an amenable  action on any unital $G$-algebra.  

The purpose of this section is to show that the answer to this question is `no', even if we replace amenability by the a priori weaker condition of C-amenability.

\begin{theorem}\label{com amen ex}
Say $G$ is a discrete group and $A$ is a unital (nonzero) C-amenable $G$-algebra.  Then the following hold:
\begin{enumerate}[(i)]
\item if $A$ is nuclear, the inclusion 
$$
A\rtimes_{\red} G \to (A\otimes A^{\op})\rtimes_{\red} G
$$
induced by the equivariant map $A\to A\otimes A^{\op}$, $a\mapsto a\otimes 1$, is nuclear;
\item if $A$ is exact, then $A\rtimes_{\red} G$ is exact;
\item $G$ is exact.
\end{enumerate}
\end{theorem}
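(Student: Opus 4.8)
The plan is to deduce exactness of $G$ — and its crossed‑product refinements — from the covariant approximations that C‑amenability provides, using unitality of $A$ to gain the \emph{norm} control that all three conclusions need. First I would record the common reduction. Since $A$ is C‑amenable, every covariant pair is C‑amenable; applying this to one that integrates to a faithful representation of $A\rtimes_{\max}G$, Proposition \ref{pro:CA=>max=red} yields $A\rtimes_{\max}G=A\rtimes_\red G$, and as $\rtimes_\red\le\rtimes_\inj\le\rtimes_{\max}$ this forces $A\rtimes_{\max}G=A\rtimes_\inj G$. By Corollary \ref{inj lem} every covariant pair of $A$ is then $G$‑injective, so (as $A$ is unital, hence all representations are nondegenerate) Lemma \ref{unital lem} supplies a ucp $G$‑map $\phi\colon\ell^\infty(G)\to\pi(A)'$ for every such pair. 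I would then fix the left regular covariant representation $(\Lambda,u)$ on $\ell^2(G)\otimes H_0$ ($H_0$ a faithful representation of $A$), which is faithful on $A\rtimes_\red G=A\rtimes_{\max}G$, together with the finitely supported net $(\xi_i)$ in $\ell^2(G,\Lambda(A)')$ from Lemma \ref{ca reform} and the compressions $\phi_i(\cdot)=T_i^*(\cdot)T_i$, $T_i v=(g\mapsto\xi_i(g)v)$, exactly as in the proof of Proposition \ref{pro:CA=>max=red}. The role of finite support is that each $T_i$ has range in $\ell^2(F_i,H_0\otimes\ell^2(G))$ for a finite $F_i\subseteq G$, so $\phi_i$ factors through a matrix amplification.

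For (i) and (ii) I would insert the Haagerup standard forms $\pi,\pi^{\op}$ of Theorem \ref{hsf the} to realise $(A\otimes A^{\op})\rtimes_\red G$ concretely and to control the commutants. Running the $\phi_i$ against the regular representation of $(A\otimes A^{\op})\rtimes_\red G$ asymptotically intertwines it with the inclusion $j\colon A\rtimes_\red G\to(A\otimes A^{\op})\rtimes_\red G$, $a\mapsto a\otimes 1$. In case (i), nuclearity of $A$ lets me replace $\otimes_{\max}$ by $\otimes$ and factor the coefficient part of each $\phi_i$ through finite‑dimensional algebras by ccp maps, upgrading the asymptotic intertwining to a point‑norm factorization of $j$ through matrices — that is, nuclearity of $j$. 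In case (ii), exactness of $A$ is used in the same way, now only to factor the coefficient algebra into a nuclear one, so that the composite $A\rtimes_\red G\xrightarrow{j}(A\otimes A^{\op})\rtimes_\red G\hookrightarrow\Bd(K)$ is a \emph{faithful} nuclear representation; since a $C^*$‑algebra admitting a faithful nuclear representation is exact, $A\rtimes_\red G$ is exact.

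For (iii) I would use that unitality gives a copy of $C^*_\red(G)$ inside $A\rtimes_\red G$ via $u_g=1\cdot u_g$. As exactness passes to $C^*$‑subalgebras and $G$ is exact iff $C^*_\red(G)$ is exact (Kirchberg--Wassermann), it is enough to exhibit $C^*_\red(G)\hookrightarrow\Bd(K)$ as a nuclear map. When $A$ is nuclear (resp.\ exact) this is immediate from (i) (resp.\ (ii)) by restricting the nuclear map to the subalgebra $C^*_\red(G)$; the genuine content of (iii) is the general unital case, where $A$ may be neither nuclear nor exact. There I would feed the regular representation of $C^*_\red(G)$ into the compressions $\phi_i$ and combine it with the ucp $G$‑map $\phi\colon\ell^\infty(G)\to\Lambda(A)'$, the aim being to manufacture the finitely supported, positive‑type kernels on $G$ that witness strong amenability of $\ell^\infty(G)$, whence exactness by Theorem \ref{ozawa the}.

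I expect the main obstacle to be precisely the passage from weak to norm approximation. The net furnished by C‑amenability converges only ultraweakly, which on its own yields just the norm equality of Proposition \ref{pro:CA=>max=red} and is genuinely too weak for exactness: strong amenability of the \emph{unital} algebra $\ell^\infty(G)$ requires convergence in its strict topology, which coincides with the norm topology, whereas the naive push‑forward of the net through any normal equivariant ucp map $\Lambda(A)'\to\ell^\infty(G)$ converges only pointwise. (This is exactly why non‑unital examples like $A=C_0(G)$ are C‑amenable for every, even non‑exact, $G$.) Unitality is therefore decisive in three ways — it makes ``strict $=$ norm'' on $\ell^\infty(G)$, it embeds $C^*_\red(G)$ in $A\rtimes_\red G$, and it produces the ucp $G$‑map $\ell^\infty(G)\to\pi(A)'$ — and the delicate point is to thread the finite support of the C‑amenability net through the standard form so as to obtain honestly norm‑convergent finite‑dimensional factorizations rather than merely ultraweakly convergent ones.
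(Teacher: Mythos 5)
Your proposal correctly assembles the outer scaffolding of the paper's argument (compressions built from the C-amenability net, standard forms to control commutants, restriction to $C^*_\red(G)\subseteq A\rtimes_\red G$ for exactness of $G$), and you have correctly located the crux: the net supplied by C-amenability converges only ultraweakly, while nuclearity of a map and exactness require point-\emph{norm} approximate factorizations. But the proposal stops exactly there -- the ``delicate point'' you name at the end is not resolved, and it is precisely the content of the paper's key technical step (Lemma \ref{com amen}). The paper's resolution runs as follows: take a standard form so that $\pi(A)'\cong (A^{\op})^{**}$; use the density of finitely supported $A^{\op}$-valued vectors in $\ell^2(G,(A^{\op})^{**})$ for the topology of the seminorms $\eta\mapsto\psi(\langle\eta,\eta\rangle)^{1/2}$ ($\psi$ a state, \cite[Lemme 1.1]{Anantharaman-Delaroche:1987os}) to replace the net by finitely supported $A^{\op}$-valued vectors with $|\phi(\langle\eta,\widetilde{\alpha}_g\eta\rangle-1)|$ small against finitely many states; deduce that $0$ lies in the \emph{weak} closure of the set $\{(\langle\eta,\widetilde{\alpha}_g\eta\rangle-1_{A^{\op}})_{g\in\mathcal G}\}$ inside $\bigoplus_{g\in\mathcal G}A^{\op}$; apply Hahn--Banach to conclude $0$ lies in the \emph{norm} closure of its convex hull; and finally, since a convex combination $\sum_i t_i\langle\xi_i,\widetilde{\alpha}_g\xi_i\rangle$ is again a finitely supported positive type function, use a GNS-type reconstruction to convert it back into a single finitely supported vector $\xi:G\to A^{\op}$ with $\langle\xi,\xi\rangle\leq 1$ and $\|1_{A^{\op}}-\langle\xi,\widetilde{\alpha}_g\xi\rangle\|<\epsilon$ in norm. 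Without this convexity trick your ``asymptotic intertwining'' of $j$ by the $\phi_i$ remains point-ultraweak, which (as you yourself observe) yields only Proposition \ref{pro:CA=>max=red} and cannot yield conclusions (i)--(iii). Note also that it matters that $\xi$ takes values in the $C^*$-algebra $A^{\op}$ rather than in $(A^{\op})^{**}$: the conjugation operator $X=\sum_h\alpha_{h^{-1}}(\xi(h))\otimes e_{hh}$ must lie in $A^{\op}\otimes M_F$ for the factorization to land in $(A\otimes A^{\op})\rtimes_\red G$ at all.

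Two further points where your route would need repair. First, the observation that each $\phi_i=T_i^*(\cdot)T_i$ ``factors through a matrix amplification'' buys nothing by itself: it factors through $\Bd(H)\otimes M_{F_i}$, which is not nuclear; the paper instead uses the specific ccp maps $\psi_F:A\rtimes_\red G\to A\otimes M_F$ and $\phi_F\circ\kappa:A\otimes M_F\to B\rtimes_\red G$ of Lemma \ref{cp maps}, whose middle term $A\otimes M_F$ is nuclear (resp.\ exact) exactly when $A$ is. Second, for (iii) in the general unital case the paper never attempts to prove strong amenability of $\ell^\infty(G)$; it simply observes that $\psi_F$ restricted to $C^*_\red(G)$ takes values in $\C\otimes M_F\cong M_F(\C)$, so the same almost-commuting diagrams exhibit a nuclear embedding of $C^*_\red(G)$ into $\Bd(H\otimes\ell^2(G))$, whence $C^*_\red(G)$, and so $G$, is exact -- and this needs no hypothesis on $A$ beyond unitality and C-amenability. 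Your fallback plan of pushing the net forward to positive type kernels valued in $\ell^\infty(G)$ runs into exactly the obstruction you describe: there is no equivariant ucp map $\Lambda(A)'\to\ell^\infty(G)$ available to do the pushing (the map from Lemma \ref{unital lem} goes the other way), so that branch of the argument would fail as stated.
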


We have recalled in Theorem~\ref{ozawa the} that a discrete group $G$ is exact if and only if $\ell^\infty(G)$ is amenable.  Thus we have the following result.

\begin{corollary}
For a discrete group $G$, the following assertions are equivalent:
\begin{enumerate}[(i)]
\item $G$ is exact;
\item $G$ admits a strongly amenable  action on a unital nonzero \cstar{}algebra;
\item $G$ admits an amenable  action on a unital nonzero \cstar{}algebra;
\item $G$ admits a C-amenable action on a unital nonzero \cstar{}algebra.
\end{enumerate}
\end{corollary}

\begin{proof}
The implications (ii)$\Rightarrow$(iii)$\Rightarrow$(iv) are straightforward, and (iv)$\Rightarrow$(i) is Theorem~\ref{com amen ex}.  The implication from (i) to (ii) follows as if $G$ is exact, then $\ell^\infty(G)$ is strongly amenable (see Theorem~\ref{ozawa the}).
\end{proof}

For the proof of Theorem~\ref{com amen ex} we need some technical preparations.  For a $G$-algebra $A$, see Definition \ref{standard mod} for the module $\ell^2(G,A)$.  The proof of the following lemma is based partly on ideas of Anantharaman-Delaroche from the paper \cite{Anantharaman-Delaroche:1987os}.

\begin{lemma}\label{com amen}
Say $G$ is a discrete group, and $A$ is a unital  C-amenable $G$-$C^*$-algebra.  Then for any $\epsilon>0$ and any finite subset $\mathcal{G}$ of $G$ there exists a function $\xi\in \ell^2(G,A^{\op})$ such that:
\begin{enumerate}[(i)]
\item $\xi$ is finitely supported;
\item $\langle \xi,\xi\rangle \leq 1$;
\item for all $g\in \mathcal{G}$, $\|1_{A^{{\op}}}-\langle  \xi,\widetilde{\alpha}_g\xi\rangle\|<\epsilon$.
\end{enumerate}
\end{lemma}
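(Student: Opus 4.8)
The plan is to transport the vectors produced by commutant amenability into $\ell^2(G,A^{\op})$ by means of a standard form, and then to upgrade ultraweak convergence to norm convergence, and bidual values to algebra values, in one Hahn--Banach step.

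First I would fix standard form representations $\pi,\pi^{\op}$ and $u$ of $A^{**}$ and $(A^{\op})^{**}$ as in Theorem~\ref{hsf the}, and restrict $\pi$ to $A$. Then $(\pi,u)$ is a covariant pair for $(A,G)$ with $\pi(A)'=(A^{\op})^{**}=:M$, and under this identification the action $\Ad_u$ on $\pi(A)'$ becomes the canonical action on $M$; consequently the module action $\widetilde{\beta}$ on $\ell^2(G,\pi(A)')$ is carried to the operator $\widetilde{\alpha}$ of the statement, now with values in $M$. Applying commutant amenability of $A$ to this pair together with Lemma~\ref{ca reform}, I obtain a net of finitely supported $\xi_i\in\ell^2(G,M)$ with $\langle\xi_i,\xi_i\rangle\leq 1$ and $\langle\xi_i,\widetilde{\alpha}_g\xi_i\rangle\to 1$ ultraweakly for each $g$. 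Setting $\theta_i(g):=\langle\xi_i,\widetilde{\alpha}_g\xi_i\rangle$ gives finitely supported positive type functions $G\to M$ with $\theta_i(e)\leq 1$ and $\theta_i(g)\to 1$ ultraweakly.

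It then suffices to produce, for a given $\epsilon$ and finite $\mathcal{G}$, a finitely supported positive type function $\theta\colon G\to A^{\op}$ with $\theta(e)\leq 1$ and $\|1_{A^{\op}}-\theta(g)\|<\epsilon$ for all $g\in\mathcal{G}$: a GNS construction exactly as in the proof of Lemma~\ref{l2 amen} (via \cite[Proposition~2.5]{Anantharaman-Delaroche:1987os}) then yields the required $\xi\in\ell^2(G,A^{\op})$, the values staying in $A^{\op}$ because a positive matrix over $A^{\op}$ factors as $S^*S$ with $S$ over $A^{\op}$. To find such a $\theta$, let $P_0$ and $P_M$ denote the (convex) sets of finitely supported positive type functions with $\theta(e)\leq 1$ and values in $A^{\op}$, respectively in $M$, and consider the linear evaluation map $\theta\mapsto(\theta(g))_{g\in\mathcal{G}}$ into $\bigoplus_{g\in\mathcal{G}}A^{\op}$, respectively $\bigoplus_{g\in\mathcal{G}}M$. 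Write $K$ and $K_M$ for the convex images. The goal is to show $(1,\dots,1)\in\overline{K}^{\,\|\cdot\|}$. The functions $\theta_i$ show that $(1,\dots,1)$ lies in the ultraweak closure of $K_M$, while Kaplansky density applied to a factorization $S^*S$ of the defining positive matrices shows that $K_M$ is contained in the ultraweak closure of $K$ (after a harmless rescaling to restore $\theta(e)\leq 1$). Now any norm-continuous real functional on $\bigoplus_{g\in\mathcal{G}}A^{\op}$ is given by a tuple in $(A^{\op})^{*}=M_{*}$, hence extends to an ultraweakly continuous functional on $\bigoplus_{g\in\mathcal{G}}M$; by the two preceding inclusions such a functional cannot strictly separate $(1,\dots,1)$ from $K$, so Hahn--Banach forces $(1,\dots,1)\in\overline{K}^{\,\|\cdot\|}$.

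The main obstacle is precisely this last passage, from an ultraweak approximation living in the bidual $M$ to a norm approximation with values in the algebra $A^{\op}$: ultraweak closures of convex sets are genuinely larger than norm closures, so the upgrade is not formal. What makes it work is that the separating functionals supplied by Hahn--Banach automatically lie in the predual $M_{*}$, combined with the Kaplansky-type fact that $M$-valued positive type functions are ultraweak limits of $A^{\op}$-valued ones; verifying that this approximation can be carried out preserving positivity and the normalization $\theta(e)\leq 1$ is the one genuinely technical point, and unitality of $A$ is used exactly to guarantee that the target $1_{A^{\op}}$ already lives in $A^{\op}$.
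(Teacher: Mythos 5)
Your proposal is correct and takes essentially the same route as the paper's proof: commutant amenability applied to the restriction of the standard form representation gives data with values in $\pi(A)'\cong(A^{\op})^{**}$, a Kaplansky-type density result (the paper cites \cite[Lemme 1.1]{Anantharaman-Delaroche:1987os}) passes to $A^{\op}$-valued data in the weak topology, a Hahn--Banach convexity argument upgrades weak to norm approximation, and a GNS construction plus finite-support approximation in $\ell^2(G,A^{\op})$ finishes. The differences are purely organizational: the paper approximates the $\ell^2$-vectors first and then applies Hahn--Banach to the convex hull of the resulting tuples, whereas you establish the two ultraweak inclusions at the level of positive type functions and then use a separation argument, with the key weak-to-norm and bidual-to-algebra difficulties handled by the same tools in both cases.
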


\begin{proof}
Let $\pi:A\to \Bd(H)$ be the restriction of a standard form representation (see Theorem \ref{hsf the}) of $A^{**}$ to $A$.  Fix $\epsilon>0$, and finite subsets $\mathcal{G}$ of $G$, and $\Phi$ of the state space of $A^{\op}$ respectively.  As $A$ is commutant amenable, there exists a finitely supported function $\xi:G\to \pi(A)'$ such that $\langle \xi,\xi\rangle\leq  1$ and such that 
\begin{equation}\label{assume}
|\phi\big(\langle \xi,\widetilde{\alpha}_{g}\xi\rangle -1\big)|<\frac\epsilon 3
\end{equation}
for all $g\in \mathcal{G}$ and $\phi\in \Phi$.  As $\pi(A)'$ canonically identifies with $(A^{\op})^{**}$ we have from \cite[Lemme 1.1]{Anantharaman-Delaroche:1987os} that the finitely supported elements in the unit ball of $\ell^2(G,A^{\op})$ are dense in the unit ball of $\ell^2(G,\pi(A)')$ for the topology defined by the seminorms
$$
\|\eta\|_\psi:=\sqrt{\psi(\langle \eta,\eta\rangle)}
$$
as $\psi$ ranges over the state space of $A^{\op}$.  Hence there exists a finitely supported function $\eta:G\to A^{\op}$ such that $\langle \eta,\eta\rangle \leq 1$, and such that
\begin{equation}\label{str norm small}
\phi(\langle \xi-\eta, \xi-\eta\rangle)^{1/2}<\frac{\epsilon}{3} 
\end{equation}
for all $\phi\in \Phi$.  For each $\phi\in \Phi$ and $g\in \mathcal{G}$, we have 
\begin{equation}\label{start}
|\phi(\langle \eta,\widetilde{\alpha}_g\eta\rangle-\langle \xi,\widetilde{\alpha}_g\xi\rangle)|\leq |\phi(\langle \eta-\xi,\widetilde{\alpha}_g\eta\rangle)|+|\phi(\langle \xi,\widetilde{\alpha}_g(\xi-\eta)\rangle)|.
\end{equation}
The Cauchy-Schwarz inequality for the state $\phi$ then implies
$$
|\phi(\langle \eta-\xi,\widetilde{\alpha}_g\eta\rangle)|\leq \phi(\langle \eta-\xi,\widetilde{\alpha}_g\eta\rangle^*\langle \eta-\xi,\widetilde{\alpha}_g\eta\rangle)^{1/2}.
$$
Positivity of $\phi$ and Cauchy-Schwarz for the inner product on $\ell^2(G,\pi(A)')$  gives 
\begin{align*}
\phi(\langle \eta-\xi,\widetilde{\alpha}_g\eta\rangle^*\langle \eta-\xi,\widetilde{\alpha}_g\eta\rangle) & \leq \|\widetilde{\alpha}_g\eta\|_{\ell^2(G,A)} \phi(\langle  \eta-\xi,  \eta-\xi\rangle)^{1/2},
\end{align*}
and hence we get from line \eqref{str norm small} that
$$
|\phi(\langle \eta-\xi,\widetilde{\alpha}_g\eta\rangle)|\leq \|\widetilde{\alpha}_g\eta\|_{\ell^2(G,A)}\phi( \langle  \eta-\xi,  \eta-\xi\rangle)^{1/2}< \frac{\epsilon}{3}.
$$
Similarly, 
$$
|\phi(\langle \xi,\widetilde{\alpha}_g(\xi-\eta)\rangle)|< \frac{\epsilon}{3}.
$$
Hence from line \eqref{start} we get 
$$
|\phi(\langle \eta,\widetilde{\alpha}_g\eta\rangle-\langle \xi,\widetilde{\alpha}_g\xi\rangle)|<\frac{2\epsilon}{3}
$$
and from this and line \eqref{assume} we get 
$$
|\phi\big(\langle \eta,\widetilde{\alpha}_{g}\eta\rangle -1\big)|<\epsilon
$$
for all $\phi\in \Phi$ and $g\in \mathcal{G}$.

Now, as the finite subset $\Phi$ of the state space of $A^{{\op}}$ was arbitrary, and as the states span $(A^{{\op}})^*$, this implies that in the $C^*$-algebra 
$$
B:=\bigoplus_{g\in \mathcal{G}}A^{{\op}}
$$
we have that zero is in the weak closure of the set
$$
\Big\{\big(\langle \eta,\widetilde{\alpha}_{g}\eta\rangle-1_{A^{\op}}\big)_{g\in \mathcal{G}}\in B\mid \eta:G\to A^{{\op}} \text{ finitely supported and }\langle \eta, \eta\rangle\leq 1\Big\}.
$$
Hence by the Hahn-Banach theorem, zero is in the norm closure of the convex hull of this set.  

It follows that for some given $\epsilon>0$ we can find a finite collection $\xi_1,...,\xi_n$ of finitely supported functions $G\to A^{\op}$ and $t_1,...,t_n\in [0,1]$ with $\sum t_i=1$ such that $\langle \xi_i,\xi_i\rangle\leq 1$ for each $i$, and such that 
\begin{equation}\label{cc ineq}
\Big\|1_{A^{\op}}-\sum_{i=1}^n t_i\langle  \xi_i,\widetilde{\alpha}_g\xi_i\rangle\Big\|<\frac\epsilon 2
\end{equation}
for all $g\in \mathcal{G}$.  Define now $h:G\to A^{\op}$ by 
$$
h(g):=\sum_{i=1}^n t_i \langle\xi_i, \widetilde{\alpha}_g\xi_i\rangle.
$$
Then clearly $h$ is finitely supported, and one checks directly that it is positive type.  Hence a GNS-type construction as in  \cite[Proposition 2.5]{Anantharaman-Delaroche:1987os} gives $\xi_0\in \ell^2(G,A^{\op})$ with 
$$
h(g)=\langle \xi_0,\widetilde{\alpha}_g\xi_0\rangle
$$
for all $g\in G$.  Note that $\langle \xi_0,\xi_0\rangle=h(e)\leq 1$, whence $\|\xi_0\|_{\ell^2(G,A^{\op})}\leq 1$.  Hence we may find finitely supported $\xi:G\to A^{{\op}}$ with $\|\xi\|_{\ell^2(G,A^{\op})}\leq 1$ and $\|\xi_0-\xi\|_{\ell^2(G,A^{\op})}< \epsilon/4$.  This gives that $\langle \xi,\xi\rangle \leq 1$ and that for any $g\in \mathcal{G}$, 
\begin{align*}
\|\langle \xi_0,\widetilde{\alpha}_g\xi_0\rangle-\langle \xi,\widetilde{\alpha}_g\xi\rangle\|_{A^{{\op}}}\leq 2 \|\xi_0-\xi\|_{\ell^2(G,A^\op)}<\frac\epsilon 2
\end{align*}
by the Cauchy-Schwarz inequality for Hilbert modules (twice); combined with line \eqref{cc ineq} above and the fact that $\sum_{i=1}^n t_i\langle  \xi_i,\widetilde{\alpha}_g\xi_i\rangle=\langle \xi_0,\widetilde{\alpha}_g\xi_0\rangle$ for all $g\in G$, this completes the proof.
\end{proof}

We now fix some notation.  For a faithful representation $\pi:A\to \Bd(H)$, let 
$$
\widetilde{\pi}:A\to\Bd(H\otimes \ell^2(G)),\quad \widetilde{\pi}(a):v\otimes \delta_g \mapsto \pi(\alpha_{g^{-1}}(a))v\otimes \delta_g
$$
be the usual induced-type representation, so $(\widetilde{\pi},1\otimes \lambda)$ is a covariant pair that integrates to a faithful representation 
$$
\widetilde{\pi}\rtimes (1\otimes \lambda):A\rtimes_{\red} G \to \Bd(H\otimes \ell^2(G)).
$$
For a finite subset $F$ of $G$, let $M_F$ denote $\Bd(\ell^2(F))$, i.e.\ the `$F$-by-$F$ matrices', and for each $g,h\in F$, let $e_{g,h}\in M_F$ denote the corresponding matrix unit.

The following result is contained in the proof of \cite[Lemma 4.2.3]{Brown:2008qy}.

\begin{lemma}\label{cp maps}
With notation as above, let $F$ be a finite subset of $G$.  Then there is a well-defined ccp map determined by the formula
$$
\psi_F:A\rtimes_{\red} G \to A\otimes M_F,\quad a\delta_g \mapsto \sum_{h\in F\cap gF} \alpha_{g^{-1}}(a)\otimes e_{h,g^{-1}h}.
$$
Moreover, there is a well-defined cp map determined by the formula
$$
\phi_F:A\otimes M_F\to A\rtimes_{\red} G, \quad a\otimes e_{g,h}\mapsto \widetilde{\pi}(\alpha_g(a))(1\otimes \lambda_{gh^{-1}}). \eqno\qed
$$
\end{lemma}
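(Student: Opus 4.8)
The plan is to realise both maps concretely inside the faithful covariant representation $(\widetilde\pi,1\otimes\lambda)$ that was just fixed, so that complete positivity of each becomes transparent and the two displayed formulas reduce to short matrix computations. Throughout I identify $A\rtimes_\red G$ with its image under the faithful integrated form $\widetilde\pi\rtimes(1\otimes\lambda)$ on $H\otimes\ell^2(G)$. For $\psi_F$ the target $A\otimes M_F$ will be identified with $\pi(A)\otimes M_F\subseteq\Bd(H)\otimes M_F$ via the faithful $*$-homomorphism $\pi\otimes\id$, and for $\phi_F$ it will be identified with $M_F(A)$ represented faithfully on $(H\otimes\ell^2(G))^{\oplus F}$ by applying $\widetilde\pi$ entrywise.

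For $\psi_F$ I would use compression. Let $p_F\in\Bd(\ell^2(G))$ be the orthogonal projection onto $\ell^2(F)$ and set $P_F:=1_H\otimes p_F$, so that $P_F(H\otimes\ell^2(G))=H\otimes\ell^2(F)$ and hence $P_F\,\Bd(H\otimes\ell^2(G))\,P_F\cong\Bd(H\otimes\ell^2(F))=\Bd(H)\otimes M_F$. Compression $x\mapsto P_F x P_F$ is ccp, so $x\mapsto P_F\big(\widetilde\pi\rtimes(1\otimes\lambda)\big)(x)P_F$ is a ccp map $A\rtimes_\red G\to\Bd(H)\otimes M_F$. The one substantive point is that its range lands in $\pi(A)\otimes M_F$, not merely in $\Bd(H)\otimes M_F$. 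Since the generators $a\delta_g$ have dense span and the map is contractive, it suffices to evaluate $P_F\,\widetilde\pi(a)(1\otimes\lambda_g)\,P_F$ on the basis vectors $v\otimes\delta_k$ with $k\in F$; this is routine and produces an element of $\pi(A)\otimes M_F$ supported on the indices $h\in F\cap gF$, with entries of the form $\pi(\alpha_{\bullet}(a))\otimes e_{h,g^{-1}h}$ exactly as in the displayed formula. Composing with $(\pi\otimes\id)^{-1}$ then defines $\psi_F$ and shows it is ccp.

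For $\phi_F$ I would exhibit the map as a single conjugation. The covariance relation $(1\otimes\lambda_g)\widetilde\pi(a)(1\otimes\lambda_g)^*=\widetilde\pi(\alpha_g(a))$ gives, for matrix units, $(1\otimes\lambda_g)\widetilde\pi(a)(1\otimes\lambda_h)^*=\widetilde\pi(\alpha_g(a))(1\otimes\lambda_{gh^{-1}})$, which is precisely $\phi_F(a\otimes e_{g,h})$ and in particular lies in $A\rtimes_\red G$. Writing $R:=(1\otimes\lambda_g)_{g\in F}$ for the associated bounded row operator $(H\otimes\ell^2(G))^{\oplus F}\to H\otimes\ell^2(G)$, one then checks on matrix units that $\phi_F(y)=R\,(\widetilde\pi\otimes\id_{M_F})(y)\,R^*$ for all $y\in A\otimes M_F$. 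As this is of the form $V^*(\cdot)V$ with $V=R^*$ a fixed bounded operator, it is completely positive, so $\phi_F$ is a well-defined cp map whose range lies in $A\rtimes_\red G$.

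None of these steps is deep, which is consistent with the remark that the lemma is contained in the proof of \cite[Lemma~4.2.3]{Brown:2008qy}; the only place demanding genuine care is the descent step for $\psi_F$, namely verifying that the compressed operators remain in $\pi(A)\otimes M_F$, since it is exactly this constraint that forces the precise automorphism coefficients and the index set $F\cap gF$ appearing in the formula.
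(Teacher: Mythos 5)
Your proposal is correct and is essentially the argument the paper intends: the paper gives no proof of its own, only the citation to \cite[Lemma 4.2.3]{Brown:2008qy}, whose proof is exactly your compression-plus-conjugation scheme ($\psi_F$ as the cut-down by $1_H\otimes p_F$ composed with the faithful regular representation, $\phi_F$ as conjugation by the row operator $(1\otimes\lambda_g)_{g\in F}$), and both your well-definedness arguments (density of the span of the $a\delta_g$ plus closedness of $\pi(A)\otimes M_F$, respectively finite-dimensionality of $M_F$) are sound.

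One point should be made explicit where you hedged with $\pi(\alpha_{\bullet}(a))$: computing on basis vectors, $P_F\,\widetilde\pi(a)(1\otimes\lambda_g)\,P_F$ sends $v\otimes\delta_{g^{-1}h}$ to $\pi(\alpha_{h^{-1}}(a))v\otimes\delta_h$ for $h\in F\cap gF$, so the compression equals $\sum_{h\in F\cap gF}\pi(\alpha_{h^{-1}}(a))\otimes e_{h,g^{-1}h}$. The coefficient is $\alpha_{h^{-1}}(a)$, not $\alpha_{g^{-1}}(a)$ as displayed in the statement of the lemma; no choice of conventions for the regular representation produces $\alpha_{g^{-1}}(a)$, and the paper itself uses the formula with $\alpha_{h^{-1}}(a)$ when it applies $\psi_F$ in the proof of Theorem \ref{com amen ex}. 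In other words, the displayed formula in the lemma contains a typo, your construction proves the corrected statement, and you should replace the bullet by $h^{-1}$ rather than asserting verbatim agreement with the display.
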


Most of the computations in the proof of the following result are inspired by \cite[Lemma 4.3.3]{Brown:2008qy}.

\begin{proof}[Proof of Theorem \ref{com amen ex}]
Let $\pi:A\to \Bd(H)$ be the restriction of a standard form of $A^{**}$ to $A$; we identify $A$ with its image under this representation when convenient and write $\widetilde{\pi}$ for the usual induced representation  
$$
\widetilde{\pi}:A\to \Bd(H\otimes \ell^2(G)).
$$
Let $\pi^{{\op}}:A^{{\op}}\to \Bd(H)$ be the corresponding faithful representation of $A^{{\op}}$ on $H$ coming from the properties of a standard form (see Theorem \ref{hsf the}).   Assuming first that $A$ is nuclear, the commuting representations $\pi$ and $\pi^{{\op}}$ gives rise to a faithful representation 
$$
\sigma:A\otimes A^{{\op}}\to \Bd(H)
$$
on the minimal tensor product of $A$ and $A^{{\op}}$.  Write $B=\sigma(A\otimes A^{{\op}})$, and let $\widetilde{\sigma}$ be the corresponding induced representation
$$
\widetilde{\sigma}:B\to \Bd(H\otimes \ell^2(G)).
$$
Let $\epsilon>0$ and finite subsets $\mathcal{G}\subseteq G$ and $\mathcal{A}\subseteq A$ be given.  We claim that there are a finite subset $F$ of $G$ and ccp maps
$$
\xymatrix{ A\rtimes_{\red} G \ar[dr]^-{\psi} & & B\rtimes_{\red} G \\ & A\otimes M_F \ar[ur]^-{\phi}  & }
$$
such that 
$$
\|\phi\psi(a\delta_g)-\widetilde{\sigma}(a\otimes 1)(1\otimes \lambda_g)\|<\epsilon
$$ 
for all $a\in \mathcal{A}$ and $g\in \mathcal{G}$.  As $A\otimes M_F$ is nuclear, this will suffice to complete the proof.

To prove the claim let $d:=\max_{a\in \mathcal{A}}\|a\|$ and let $\xi:G\to A^{\op}$  have the properties as in Lemma \ref{com amen} with respect to the finite set $\mathcal{G}$, and the constant $\epsilon/d$, so in particular
\begin{equation}\label{small norm}
\|1-\langle \xi,\widetilde{\alpha}_g\xi\rangle \|<\frac{\epsilon}{d}
\end{equation}
for all $g\in \mathcal{G}$.  Let $F\subseteq G$ be a finite set such that $F\cap gF$ contains the support of $\xi$ for all $g\in \mathcal{G}$ (for example, $F=\text{supp}(\xi)\cup\bigcup_{g\in \mathcal{G}}g^{-1}\text{supp}(\xi)$ works).  Define first 
$$
\psi:A \rtimes_{\red} G \to A\otimes M_F
$$
to be the map $\psi_F$ from Lemma \ref{cp maps}, which is ccp.  Define $X\in A^{{\op}}\otimes M_F$ by 
$$
X:=\sum_{h\in F}\alpha_{h^{-1}}(\xi(h))\otimes e_{hh}
$$
and define 
$$
\kappa:A\otimes M_F\to B\otimes M_F,\quad a\mapsto X^*aX
$$
(here we have included $A\otimes M_F$ and $A^{\op}\otimes M_F$ in $B\otimes M_F$ in the natural ways to make sense of the product on the right).  Clearly $\kappa$ is cp.  Finally, let $\phi_F:B\otimes M_F\to B\rtimes_{\red} G$ be as in Lemma \ref{cp maps} applied to the $C^*$-algebra $B$, and define 
$$
\phi:A\otimes M_F\to B\rtimes_{\red} G, \quad \phi:=\phi_F\circ \kappa.
$$
To complete the proof in the nuclear case, it will suffice to show that $\phi$ and $\psi$ have the claimed properties.

Indeed, we already have from Lemma \ref{cp maps} that $\psi$ is ccp.  As we also already know from the same lemma that $\phi$ is cp, to see that it is ccp it suffices to show that $\phi(1)\leq 1$.  For this, we compute that 
\begin{align*}
\phi(1) & =\phi_F\Big(X^*1X\Big)=\phi_F\Big( \sum_{h\in F} \alpha_{h^{-1}}(\xi(h)^*\xi(h))\otimes e_{h,h}\Big)  =\sum_{h\in F}\xi(h)^*\xi(h)=\langle \xi,\xi\rangle \\ & \leq 1
\end{align*}
as claimed.  It remains to show that 
$$
\|\phi\psi(a\delta_g)-\widetilde{\sigma}(a\otimes 1)(1\otimes \lambda_g)\|<\epsilon,
$$ 
or equivalently that 
$$
\|\phi\psi(a\delta_g)-\widetilde{\pi}(a)(1\otimes \lambda_g)\|<\epsilon,
$$ 
for all $a\in \mathcal{A}$ and $g\in \mathcal{G}$.  For this, we compute that 
\begin{align*}
& \phi\psi(a\delta_g)=\phi_F\kappa\Big(\sum_{h\in F\cap gF}\alpha_{h^{-1}}(a)\otimes e_{h,g^{-1}h}\Big) 
\\ & =\phi_F\Big(\Big(\sum_{k\in F}\alpha_{k^{-1}}(\xi(k)^*)\otimes e_{k,k}\Big)\Big(\sum_{h\in F\cap gF}\alpha_{h^{-1}}(a)\otimes e_{h,g^{-1}h}\Big) \Big(\sum_{l\in F}\alpha_{l^{-1}}(\xi(l))\otimes e_{l,l}\Big)\Big)\\
& =\phi_F\Big(\sum_{h\in F\cap gF}\alpha_{h^{-1}}(\xi(h)^*a\alpha_g(\xi(g^{-1}h))\otimes e_{h,g^{-1}h}\Big).
\end{align*}
Hence using the formula for $\phi_F$ in Lemma \ref{cp maps} we get
\begin{align*}
\phi\psi(a\delta_g) & = \sum_{h\in F\cap gF}\widetilde{\sigma}(\xi(h)^*a\alpha_g(\xi(g^{-1}h)))(1\otimes \lambda_g).
\end{align*}
As $F\cap gF$ contains the support of $\xi$ for all $g\in \mathcal{G}$, and as $a$ commutes with the image of $\xi$, we have that this equals 
$$
 \widetilde{\pi^{{\op}}}(\langle \xi,\widetilde{\alpha}_g\xi\rangle) \widetilde{\pi}(a)(1\otimes \lambda_g).
$$
Hence 
$$
\|\phi\psi(a\delta_g)-\widetilde{\pi}(a)(1\otimes \lambda_g)\|\leq (\max_{a\in \mathcal{A}}\|a\|)\|1-\langle \xi,\widetilde{\alpha}_g\xi\rangle\|,
$$
and so we are done in the nuclear case by the inequality in line \eqref{small norm}.

If we only assume $A$ is exact, we can run much of the above proof, replacing $B$ with the $C^*$-subalgebra of $\Bd(H)$ generated by $\pi(A)$ and $\pi^{{\op}}(A^{{\op}})$ to get that for each finite subset $\mathcal{A}$ of $A\rtimes_{\red} G$ and $\epsilon>0$ there are a finite subset $F$ of $G$ and ccp maps $\psi$ and $\phi$ as in the diagram below
$$
\xymatrix{ A\rtimes_{\red} G \ar[rr] \ar[dr]^-\psi & & B\rtimes_{\red} G \ar[r] & \Bd(H\otimes \ell^2(G)) \\ & A\otimes M_F \ar[ur]^\phi & & },
$$
where the horizontal maps are the obvious inclusions, and where the diagram `almost commutes' in the sense that $\|\phi\psi(a)-a\|<\epsilon$ for all $a\in \mathcal{A}$ (having identified $A\rtimes_{\red} G$ with its image in $\Bd(H\otimes \ell^2(G))$ to make sense of this).  Now, as $A\otimes M_F$ is exact, there exists a Hilbert space $H'$ and a nuclear faithful embedding $A\otimes M_F\to \Bd(H')$.  In the diagram
$$
\xymatrix{ A\rtimes_{\red} G \ar[rr] \ar[dr]^-\psi & & B\rtimes_{\red} G \ar[r] & \Bd(H\otimes \ell^2(G)) \\ & A\otimes M_F \ar[ur]^\phi \ar[rr] & & \Bd(H') \ar@{-->}[u] },
$$
the dashed arrow can be filled in with a ccp map by Arveson's extension theorem so that the right hand quadrilateral honestly commutes.  As the map $A\otimes M_F\to \Bd(H')$ is nuclear, the existence of these diagrams gives that $A\rtimes_{\red} G$ is nuclearly embeddable, so exact.

Finally, for exactness of $G$, we note that the above maps restricted to $C^*_r(G)\subseteq A\rtimes_{\red} G$ show that $C^*_r(G)$ is nuclearly embedded in $\Bd(H\otimes \ell^2(G))$ (whether or not $A$ is exact), as the `downwards' map $\psi$ takes image in $M_F(\C)$ when restricted to $C^*_r(G)$.  Hence $C^*_r(G)$ is exact, and thus $G$ is itself exact as it is discrete.
\end{proof}

\begin{remark}\label{nuc rem}
In \cite[Th\'{e}or\`{e}me 4.5]{Anantharaman-Delaroche:1987os}, Anantharaman-Delaroche proves (among other things) that if $A$ is a nuclear $G$-algebra, then $A\rtimes_{\red} G$ is nuclear if and only if the action of $G$ on $A$ is amenable.   On the other hand, if $A$ is unital, nuclear and commutant amenable, we get nuclearity of the inclusion 
$$
A\rtimes_{\red} G \to (A\otimes A^{\op})\rtimes_{\red} G.
$$
It would be interesting if this could somehow be improved to show nuclearity of $A\rtimes_{\red} G$: indeed, we would then get that commutant amenability and amenability are equivalent for all unital and nuclear $G$-algebras, and moreover that for such $G$-algebras, equality of $A\rtimes_{\max}G$ and $A\rtimes_{\red} G$ is equivalent to the conditions in Theorem \ref{nc ex the} when $G$ is exact.  This is related to Remark \ref{ca tens} above.
\end{remark}

\begin{remark}\label{exel rem}
Exel \cite[Definition 20.4]{Exel:2014rp} has introduced a different notion of amenability for $G$-algebras (and more generally for Fell bundles) under the name of the \emph{approximation property}.  He asked whether the existence of a unital $G$-algebra with his approximation property implies exactness of $G$.  The answer is `yes'.  Indeed, the relationship between versions of the approximation property and amenability were extensively studied in \cite{Abadie:2019kc}.  In particular, the results of \cite[Theorem 6.11 and Corollary 6.16]{Abadie:2019kc} imply that Exel's approximation property implies amenability, so Theorem \ref{com amen ex} gives the solution to this question.
\end{remark}

\section{Characterizing equivariant injectivity and the equivariant WEP}\label{inj and wep sec}

In this section, we study $G$-injectivity and the $G$-WEP in more detail.  In particular, we give complete characterizations of both in terms of amenability and the respective non-equivariant versions.  We also collect together many equivalent conditions in the special case of nuclear $G$-algebras and exact groups.

The following definition is partly inspired by work of Anantharaman-Delaroche \cite[Section 2]{Anantharaman-Delaroche:1982aa} in the setting of von Neumann algebras, and of Kirchberg \cite[Proposition 3.1]{Kirchberg:1993aa} in the non-equivariant setting.

\begin{definition}\label{rel inj}
Let $\iota:A\into B$ be a $G$-embedding of $G$-algebras.  The embedding is \emph{relatively $G$\nb-injective} (respectively, \emph{weakly relatively $G$-injective}) if there exists a ccp $G$-map $P:B\onto A$ splitting $\iota$ (respectively, a ccp $G$-map $P:B\to A^{**}$ such that $P\circ \iota:A\to A^{**}$ is the canonical inclusion).
\end{definition}

\begin{remark}\label{inj def 2}
Comparing Definitions \ref{rel inj} and \ref{inj def}, a $G$-algebra $A$ is $G$-injective (respectively, has the $G$-WEP) if any $G$-embedding $A\into B$ of $G$-algebras is relatively $G$-injective (respectively, relatively weakly $G$-injective).
\end{remark}

Our main goals in this section are the following results.  Note the different role played by exactness in the first two theorems: this is essentially due to the fact that if $G$ admits an action on some $A$ such that $A^{**}$ is $G$-injective, then $G$ must be exact; this is not true for the $G$-WEP, however.

\begin{theorem}\label{g wep vs wep}
For a unital $G$-algebra $A$, the following are equivalent:
\begin{enumerate}[(i)]
\item $A$ is amenable and has the WEP;
\item $A$ has the $G$-WEP and $G$ is exact.
\end{enumerate}
\end{theorem}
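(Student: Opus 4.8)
The plan is to prove the two implications separately, treating (ii)$\Rightarrow$(i) first since it is the more routine direction.

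For (ii)$\Rightarrow$(i), the ordinary WEP is immediate, since a ccp $G$-map is in particular ccp, so the $G$-WEP gives the WEP on the nose. To produce amenability I would exploit the canonical $G$-embedding $c\colon A\into \ell^\infty(G,A)$ as constant functions, where $\ell^\infty(G,A)$ carries the diagonal-type action of Example \ref{inj ex}; one checks directly that $c$ is an equivariant $*$-homomorphism. The $G$-WEP then supplies a ccp $G$-map $P\colon \ell^\infty(G,A)\to A^{**}$ with $P\circ c$ equal to the canonical inclusion $A\into A^{**}$. I would restrict $P$ to the unital $G$-subalgebra $\ell^\infty(G)\cdot 1_A\cong \ell^\infty(G)$ (with its translation action), obtaining a ucp $G$-map $\phi\colon \ell^\infty(G)\to A^{**}$. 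Since $c(A)$ lies in the multiplicative domain of $P$ and commutes with $\ell^\infty(G)\cdot 1_A$ inside $\ell^\infty(G,A)$, the image of $\phi$ commutes with $A$ and hence lands in $Z(A^{**})$. With $G$ exact in hand, Proposition \ref{G-exact-char-amenable}(ii) converts this ucp $G$-map $\ell^\infty(G)\to Z(A^{**})$ into amenability of $A$. Unitality of $A$ enters here through $1_A$.

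For (i)$\Rightarrow$(ii), exactness of $G$ is the cheap part: amenability implies C-amenability by Remark \ref{ca rem}, and a unital nonzero C-amenable $G$-algebra forces $G$ to be exact by Theorem \ref{com amen ex}(iii). So the real content is upgrading the WEP to the $G$-WEP using amenability, by an equivariant averaging argument. Given a $G$-embedding $\iota\colon A\into B$, the WEP provides a ccp map $\Phi\colon B\to A^{**}$ splitting $\iota$, and Lemma \ref{l2 amen} provides a net $\xi_i\colon G\to Z(A^{**})$ with $\langle \xi_i,\xi_i\rangle\le 1$ and $\langle \xi_i,\widetilde\alpha_g\xi_i\rangle\to 1$ ultraweakly. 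I would set
\[
\Psi_i(b):=\sum_{g\in G}\xi_i(g)^*\,\alpha_g^{**}\big(\Phi(\beta_{g^{-1}}(b))\big)\,\xi_i(g),
\]
a finite sum of ccp maps; unitizing $B$ and extending $\Phi$ unitally gives $\Psi_i(1)=\langle\xi_i,\xi_i\rangle\le 1$, so each $\Psi_i$ is ccp. Because $\xi_i(g)$ is central and $\Phi\circ\iota$ is the inclusion, one computes $\Psi_i(\iota(a))=a\,\langle\xi_i,\xi_i\rangle\to a$ ultraweakly, so any point-ultraweak cluster point $Q$ of $(\Psi_i)$ splits $\iota$ and is ccp as a limit of ccp maps. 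It then only remains to see that $Q$ is equivariant, which is exactly the $G$-WEP in the sense of Definition \ref{inj def}.

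The hard part will be precisely this equivariance. The maps $\Psi_i$ are only approximately equivariant: a direct reindexing shows that $\alpha_h^{**}(\Psi_i(b))$ has the same shape as $\Psi_i(\beta_h(b))$ but with $\xi_i$ replaced by $\widetilde\alpha_h\xi_i$. Since $\langle \xi_i-\widetilde\alpha_h\xi_i,\xi_i-\widetilde\alpha_h\xi_i\rangle=\langle\xi_i,\xi_i\rangle-\langle\xi_i,\widetilde\alpha_h\xi_i\rangle-\langle\widetilde\alpha_h\xi_i,\xi_i\rangle+\alpha_h^{**}(\langle\xi_i,\xi_i\rangle)\to 0$ ultraweakly, I would control the difference $\Psi_i(\beta_h(b))-\alpha_h^{**}(\Psi_i(b))$ by the Cauchy--Schwarz estimate for the $A^{**}$-valued inner product, tested against normal states exactly as in the proof of Lemma \ref{com amen}, using $\|\alpha_g^{**}(\Phi(\beta_{g^{-1}}(b)))\|\le\|b\|$. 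This yields $\Psi_i(\beta_h(b))-\alpha_h^{**}(\Psi_i(b))\to 0$ ultraweakly for every $h$ and $b$; passing to the cluster-point subnet and using ultraweak continuity of each $\alpha_h^{**}$ then gives $Q(\beta_h(b))=\alpha_h^{**}(Q(b))$. The only delicate bookkeeping is that the cluster point be taken along a single subnet, so that the splitting property, contractivity, and approximate equivariance all survive in the limit; compactness of the unit ball of ccp maps $B\to A^{**}$ in the point-ultraweak topology makes this routine.
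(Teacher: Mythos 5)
There is one genuine gap, and it occurs at the very first step of (ii)$\Rightarrow$(i), where you claim the ordinary WEP is ``immediate'' because a ccp $G$-map is in particular ccp. The problem is not the equivariance of the splitting map but the class of embeddings being quantified over: the WEP demands a ccp splitting $C\to A^{**}$ for \emph{every} embedding $A\into C$ into an arbitrary $C^*$-algebra carrying no $G$-action, whereas the $G$-WEP only produces splittings for $G$-embeddings into $G$-algebras. Given an arbitrary non-equivariant inclusion $A\subseteq C$, there is simply nothing to apply the $G$-WEP to, so the implication is not formal. The paper devotes Lemma \ref{pro:G-WEP=>WEP} and Corollary \ref{rem:G-WEP-Injectivity} to exactly this point: one fixes a faithful non-equivariant representation $\pi\colon A\to \Bd(K)$, forms the equivariant ``induced'' embedding $\widetilde\pi\colon A\into \ell^\infty(G,\Bd(K))$, $\widetilde\pi(a)(g)=\pi(\alpha_g^{-1}(a))$, splits \emph{that} using the $G$-WEP, and then, for an arbitrary embedding $A\into C$, uses injectivity of $\ell^\infty(G,\Bd(K))$ as a plain $C^*$-algebra to extend $\widetilde\pi$ to a ccp map $C\to\ell^\infty(G,\Bd(K))$ and composes. (Alternatively you could use the standard fact that the WEP can be tested on a single faithful representation and choose a covariant one, so that $\Bd(H)$ becomes a $G$-algebra via $\Ad u$ and the inclusion is a $G$-embedding; but some such argument must be supplied.)

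The rest of your proof is correct and is essentially the paper's. Your extraction of amenability from the $G$-WEP (constant-function embedding into $\ell^\infty(G,A)$, the multiplicative-domain/centrality argument, then Proposition \ref{G-exact-char-amenable}) is the unital case of Lemma \ref{lem:G-WEP=>UCP} and Corollary \ref{cor:G-WEP=>amenable}; your exactness step is the paper's appeal to Theorem \ref{com amen ex}; and your averaging map $\Psi_i(b)=\sum_{g}\xi_i(g)^*\alpha_g^{**}\big(\Phi(\beta_{g^{-1}}(b))\big)\xi_i(g)$ is precisely the paper's $\psi_i(b)=\langle\xi_i,m(b)\xi_i\rangle$ from Lemma \ref{non-equi to equi} (packaged as Corollary \ref{wep to g wep}), including the same Cauchy--Schwarz treatment of approximate equivariance and the same point-ultraweak compactness argument for extracting the limit map. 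Once the $G$-WEP $\Rightarrow$ WEP step is repaired as above, the proof is complete.
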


\begin{theorem}\label{g inj vs inj}
For a $G$-algebra $A$, the following are equivalent:
\begin{enumerate}[(i)]
\item $A$ is amenable and $A^{**}$ is injective;
\item $A^{**}$ is $G$-injective.
\end{enumerate}
Moreover, if there is a unital $G$-algebra satisfying these conditions, then $G$ is exact.
\end{theorem}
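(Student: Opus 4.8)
The plan is to prove the equivalence by producing equivariant conditional expectations in both directions, the averaging step needed for (i)$\Rightarrow$(ii) being the only genuinely delicate point; the implication (ii)$\Rightarrow$(i) and the exactness addendum are then essentially formal. Throughout I would use the description of amenability from Lemma \ref{l2 amen} (a net $(\xi_i\colon G\to Z(A^{**}))$ with $\langle\xi_i,\xi_i\rangle\le1$ and $\langle\xi_i,\widetilde\alpha_g\xi_i\rangle\to 1$ ultraweakly), the fact from Example \ref{inj ex} that $\ell^\infty(G,B)$ is $G$-injective whenever $B$ is injective, and Anantharaman-Delaroche's characterisation \cite[Th\'{e}or\`{e}me 3.3]{Anantharaman-Delaroche:1987os} of amenability of $A$ as the existence of an equivariant conditional expectation $\ell^\infty(G,Z(A^{**}))\to Z(A^{**})$ splitting the constant embedding.

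For (ii)$\Rightarrow$(i), I would first obtain injectivity of $A^{**}$ for free. Fix a faithful normal representation $A^{**}\subseteq\Bd(H)$ in which the $G$-action is spatially implemented by unitaries $u_g$ (e.g.\ a standard form, Theorem \ref{hsf the}); then $\Bd(H)$ carries the action $\Ad u$, and the constant embedding $c\colon\Bd(H)\to\ell^\infty(G,\Bd(H))$ (diagonal action) is equivariant and restricts on $A^{**}$ to an equivariant embedding. Applying $G$-injectivity of $A^{**}$ to this $G$-embedding yields an equivariant conditional expectation $Q\colon\ell^\infty(G,\Bd(H))\to A^{**}$, whence $Q\circ c\colon\Bd(H)\to A^{**}$ is a (non-equivariant) conditional expectation and $A^{**}$ is injective. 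For amenability, I would apply $G$-injectivity of $A^{**}$ to the equivariant constant embedding $A^{**}\subseteq\ell^\infty(G,A^{**})$ to get an equivariant conditional expectation $E\colon\ell^\infty(G,A^{**})\to A^{**}$. Since $E$ splits this embedding, the constant copy of $A^{**}$ lies in the multiplicative domain of $E$; a short multiplicative-domain computation then shows that $E$ carries $\ell^\infty(G,Z(A^{**}))$ into $Z(A^{**})$, producing the equivariant conditional expectation $\ell^\infty(G,Z(A^{**}))\to Z(A^{**})$ that \cite[Th\'{e}or\`{e}me 3.3]{Anantharaman-Delaroche:1987os} equates with amenability of $A$.

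For (i)$\Rightarrow$(ii), fix a $G$-embedding $A^{**}\subseteq B$; I must manufacture an equivariant conditional expectation $B\to A^{**}$. Injectivity of $A^{**}$ supplies a non-equivariant conditional expectation $E_0\colon B\to A^{**}$, and I would then average it against the amenability net, setting
\[
P_i(b):=\sum_{g\in G}\xi_i(g)^*\,\alpha_g\!\big(E_0(\alpha_{g^{-1}}(b))\big)\,\xi_i(g)\in A^{**}.
\]
Each $P_i$ is ccp with $P_i(1)=\langle\xi_i,\xi_i\rangle\le 1$, and since the $\xi_i$ are central one checks $P_i|_{A^{**}}(a)=a\langle\xi_i,\xi_i\rangle\to a$ ultraweakly (the $g=e$ case of the amenability net gives $\langle\xi_i,\xi_i\rangle\to1$); hence any point-ultraweak cluster point $P$ of $(P_i)$ is a conditional expectation onto $A^{**}$. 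The crux is to show $P$ is equivariant: rewriting $P_i(\alpha_s b)$ after the substitution $g\mapsto sg$ reduces the difference $P_i(\alpha_s b)-\alpha_s(P_i(b))$ to terms in which $\xi_i$ is replaced by $\widetilde\alpha_s\xi_i$, and testing against a normal functional and using $\langle\xi_i,\xi_i\rangle,\langle\widetilde\alpha_s\xi_i,\widetilde\alpha_s\xi_i\rangle\le1$ together with $\langle\xi_i,\widetilde\alpha_s\xi_i\rangle\to1$, a Cauchy--Schwarz estimate of exactly the kind carried out in the proof of Lemma \ref{com amen} shows these error terms vanish in the limit. This asymptotic-equivariance estimate is the main obstacle; everything else is formal. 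Granting it, $P$ is the desired equivariant conditional expectation and $A^{**}$ is $G$-injective.

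Finally, for the addendum, suppose $A$ is unital and satisfies these conditions. Then $A^{**}$, hence $Z(A^{**})$, is unital, and by the equivalence just proved $A$ is amenable, so the net $\theta_i(g)=\langle\xi_i,\widetilde\alpha_g\xi_i\rangle$ witnesses amenability of the unital commutative $G$-algebra $Z(A^{**})$. By \cite[Th\'{e}or\`{e}me 4.9]{Anantharaman-Delaroche:1987os} its (compact) Gelfand spectrum is then an amenable $G$-space, and an amenable action on a compact space is well known to force $G$ to be exact (cf.\ the reasoning recalled in Remark \ref{com nex 1 rem 2}). This gives exactness of $G$ and completes the plan.
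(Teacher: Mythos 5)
Your proof of the equivalence of (i) and (ii) is correct and is essentially the paper's own argument: the paper deduces injectivity of $A^{**}$ from $G$\nobreakdash-injectivity by composing an equivariant expectation with a constant-function embedding (Corollary \ref{rem:G-WEP-Injectivity}), reads off amenability from the equivariant conditional expectation $\ell^\infty(G,A^{**})\to A^{**}$ via \cite[Th\'{e}or\`{e}me 3.3]{Anantharaman-Delaroche:1987os} (your multiplicative-domain reduction to an expectation $\ell^\infty(G,Z(A^{**}))\to Z(A^{**})$ is a correct and harmless refinement), and proves (i)$\Rightarrow$(ii) by exactly your averaging formula: this is Lemma \ref{non-equi to equi} and Corollary \ref{wep to g wep}, including the substitution-plus-Cauchy--Schwarz argument for asymptotic equivariance.

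The exactness addendum, however, has a genuine gap. Amenability of $A$ gives positive type functions $\theta_i\colon G\to Z(A^{**})$ with $\theta_i(g)\to 1$ in the ultraweak topology of the von Neumann algebra $A^{**}$, i.e.\ tested against \emph{normal} functionals only. But amenability of the $G$-algebra $B:=Z(A^{**})$ in the sense of Definition \ref{def-amenable} --- which is what \cite[Th\'{e}or\`{e}me 4.9]{Anantharaman-Delaroche:1987os} converts into topological amenability of the Gelfand spectrum --- requires convergence ultraweakly in $B^{**}$, i.e.\ against \emph{all} bounded functionals on $B$, singular ones included. These topologies, $\sigma(B,B_*)$ versus $\sigma(B,B^*)$, genuinely differ, and the inference you make is false in general: for \emph{any} discrete $G$, exact or not, the finitely supported vectors $\xi_F\in\ell^2(G,\ell^\infty(G))$ given by $\xi_F(g)=\chi_{\{g\}}$ for $g\in F$ and $\xi_F(g)=0$ otherwise ($F\subseteq G$ finite) yield finitely supported positive type functions $\theta_F(g)=\langle\xi_F,\widetilde{\alpha}_g\xi_F\rangle=\chi_{F\cap gF}$ with $\theta_F(e)\leq 1$ and $\theta_F(g)\to 1$ pointwise on $G$, hence $\sigma(\ell^\infty(G),\ell^1(G))$; yet amenability of $\ell^\infty(G)$ as a $G$-algebra is equivalent to exactness of $G$ (Theorem \ref{ozawa the} together with the equivalence of amenability and strong amenability for commutative algebras). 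So your inference, applied verbatim to $M=\ell^\infty(G)$, would prove every discrete group exact. This topology issue is precisely what makes Section \ref{exact sec} of the paper nontrivial: the paper proves the addendum by citing Theorem \ref{com amen ex}, whose proof cannot proceed by restricting to the centre. Instead it upgrades ultraweak to norm convergence by a Hahn--Banach/convexity argument carried out inside the honest $C^*$-algebra $A^{\op}$ (Lemma \ref{com amen}) --- possible there because the relevant commutant is $(A^{\op})^{**}$, the bidual of a $C^*$-algebra, so its normal functionals are \emph{all} functionals of $A^{\op}$ --- and then builds ccp factorizations through $A\otimes M_F$ exhibiting $C^*_r(G)\subseteq A\rtimes_{\red}G$ as nuclearly embeddable. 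Your plan for the addendum would need to be replaced by an argument of this kind.
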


The following theorem, which is essentially `just' a compilation of our results and other results of Claire Anantharaman-Delaroche from \cite{Anantharaman-Delaroche:1979aa} and \cite{Anantharaman-Delaroche:1987os}, summarises some of the known facts about amenable actions of exact groups on nuclear $C^*$-algebras, and the relationships to $G$-injectivity and the $G$-WEP.  

\begin{theorem}\label{big nuclear list}
If $G$ is an exact group and $A$ is a nuclear $G$-algebra, then the following assertions are equivalent:
\begin{enumerate}[(i)]
\item \label{bn ada} $A$ is amenable;
\item \label{bn gwep} $A$ has the $G$-WEP;
\item \label{bn ginj} $A^{**}$ is $G$-injective;
\item \label{bn ** boa} $A^{**}$ is strongly amenable;
\item \label{bn ** ada} $A^{**}$ is amenable;
\item \label{bn nuc} $A\rtimes_\red G$ is nuclear;
\item \label{bn cp inj} $A^{**}\bar\rtimes G$ is injective.
\end{enumerate}
Here $A^{**}\bar\rtimes G$ denotes the von Neumann algebra crossed product of $A^{**}$ with $G$.
\end{theorem}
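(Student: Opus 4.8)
The plan is to treat amenability of $A$, condition \eqref{bn ada}, as the hub through which every other condition is routed; two standing facts do most of the work. First, since $A$ is nuclear its bidual $A^{**}$ is an injective von Neumann algebra (Choi--Effros/Connes). Second, since $G$ is exact, Proposition~\ref{G-exact-char-amenable} lets me detect amenability of any $G$-algebra $B$ from the mere existence of an equivariant ucp map $\ell^\infty(G)\to Z(B^{**})$, and strong amenability of a unital $B$ from such a map into $Z(B)$. With these in hand most implications are short.

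For the ``double dual'' cluster I would argue as follows. The equivalence \eqref{bn ada}$\Leftrightarrow$\eqref{bn ** boa} is exactly the consequence recorded in Proposition~\ref{G-exact-char-amenable} (that $A$ is amenable iff $A^{**}$ is strongly amenable), and \eqref{bn ** boa}$\Rightarrow$\eqref{bn ** ada} is the general implication that strong amenability implies amenability. For \eqref{bn ** ada}$\Rightarrow$\eqref{bn ada} I apply Proposition~\ref{G-exact-char-amenable}(ii) to the $G$-algebra $A^{**}$ to get an equivariant ucp map $\ell^\infty(G)\to Z((A^{**})^{**})$, and then compose with the canonical normal $G$-equivariant projection $(A^{**})^{**}\to A^{**}$ (the left inverse to the inclusion); the resulting map $\ell^\infty(G)\to Z(A^{**})$ forces amenability of $A$ by Proposition~\ref{G-exact-char-amenable}(ii) again. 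Condition \eqref{bn ginj} is handled by Theorem~\ref{g inj vs inj}: it gives that $A^{**}$ is $G$-injective iff $A$ is amenable and $A^{**}$ is injective, and since nuclearity already supplies injectivity of $A^{**}$ this collapses to \eqref{bn ada}. Finally \eqref{bn ada}$\Leftrightarrow$\eqref{bn nuc} is precisely \cite[Th\'{e}or\`{e}me 4.5]{Anantharaman-Delaroche:1987os} for nuclear $A$.

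It remains to bring in the $G$-WEP and the von Neumann crossed product. The implication \eqref{bn ginj}$\Rightarrow$\eqref{bn gwep} is Remark~\ref{inj to wep}. For the converse \eqref{bn gwep}$\Rightarrow$\eqref{bn ada} I would give a direct multiplicative-domain argument rather than route through the unital statement of Theorem~\ref{g wep vs wep} (which would force me to control unitizations and $G$-invariant states). Embed $A$ into $\ell^\infty(G,A^{**})$ as constant functions; this is a $G$-embedding, so the $G$-WEP produces a ccp $G$-map $P\colon \ell^\infty(G,A^{**})\to A^{**}$ restricting to the canonical inclusion on $A$. Restricting $P$ to the central copy $\ell^\infty(G)\cdot 1\subseteq \ell^\infty(G,A^{**})$ gives a ucp $G$-map $\phi\colon \ell^\infty(G)\to A^{**}$. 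Since $A$ lies in the multiplicative domain of $P$ and commutes with $\ell^\infty(G)\cdot 1$, each $\phi(f)$ commutes with $A$, hence with all of $A^{**}$ by ultraweak density; thus $\phi$ takes values in $Z(A^{**})$, and Proposition~\ref{G-exact-char-amenable}(ii) yields amenability. (Notably this uses only exactness, not nuclearity.)

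For \eqref{bn cp inj} the natural route is the von Neumann algebra theory of \cite{Anantharaman-Delaroche:1979aa}: since $A^{**}$ is injective, injectivity of the $W^*$-crossed product $A^{**}\bar\rtimes G$ is equivalent to amenability of the $W^*$-dynamical system $(A^{**},G)$, and I would then identify the latter with condition \eqref{bn ** ada} (equivalently \eqref{bn ** boa}) by matching Anantharaman-Delaroche's $W^*$-amenability with our net-of-positive-type-functions definition, e.g.\ through the equivariant conditional expectation $\ell^\infty(G)\,\bar\otimes\,A^{**}\to A^{**}$ characterization that underlies Proposition~\ref{G-exact-char-amenable}. I expect this last step to be the main obstacle: the remaining implications are essentially bookkeeping over the cited results, whereas reconciling the purely $W^*$-algebraic notion of amenable action from \cite{Anantharaman-Delaroche:1979aa} with the $C^*$-style definitions used throughout this paper is where the genuine care is needed, and it is the point most sensitive to getting the normality and equivariance of the intervening maps exactly right.
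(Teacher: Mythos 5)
Your proposal is correct, and its skeleton---amenability \eqref{bn ada} as the hub, nuclearity supplying injectivity of $A^{**}$ and the WEP, exactness supplying Proposition~\ref{G-exact-char-amenable}, Theorem~\ref{g inj vs inj} for \eqref{bn ginj}, and \cite[Th\'{e}or\`{e}me 4.5]{Anantharaman-Delaroche:1987os} for \eqref{bn nuc}---is the same as the paper's; your treatment of the bidual cluster (including the normal equivariant surjection $(A^{**})^{**}\onto A^{**}$ restricted to centres for \eqref{bn ** ada}$\Rightarrow$\eqref{bn ada}) also matches the paper's proof essentially verbatim. The differences are local, and worth comparing. For \eqref{bn ada}$\Leftrightarrow$\eqref{bn gwep} the paper simply cites Theorem~\ref{g wep vs wep} together with the fact that nuclear algebras have the WEP; since that theorem is stated for unital $A$ while Theorem~\ref{big nuclear list} is not, this tacitly leans on Remark~\ref{g wep to wep rem}, which removes unitality when $G$ is exact. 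Your direct multiplicative-domain argument sidesteps that issue, but note it is a special case of machinery the paper already has: Corollary~\ref{cor:G-WEP=>amenable} (via Lemma~\ref{lem:G-WEP=>UCP}) states exactly that for exact $G$ the $G$-WEP implies amenability, with no unitality hypothesis, so you could have cited it verbatim. Also, in your argument the unitality of $\phi$ (that is, $P(1)=1$) deserves a word when $A$ is non-unital: if $(e_i)$ is an approximate unit of $A$ and $\iota$ is your constant-function embedding, then $e_i=P(\iota(e_i))\leq P(1)\leq 1$ and $e_i\to 1$ ultraweakly in $A^{**}$, forcing $P(1)=1$; this is the same approximate-unit device used in Lemma~\ref{lem:G-WEP=>UCP}. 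Finally, for \eqref{bn cp inj} the ``main obstacle'' you anticipate does not exist: the very citation used for \eqref{bn nuc}, namely \cite[Th\'{e}or\`{e}me 4.5]{Anantharaman-Delaroche:1987os}, already asserts that for nuclear $A$ amenability is equivalent both to nuclearity of $A\rtimes_\red G$ and to injectivity of $A^{**}\bar\rtimes G$, so the paper disposes of \eqref{bn nuc} and \eqref{bn cp inj} in a single line. Your alternative route through \cite{Anantharaman-Delaroche:1979aa} plus the identification of $W^*$-amenability with the positive-type-function definition is legitimate---that identification is precisely \cite[Th\'{e}or\`{e}me 3.3]{Anantharaman-Delaroche:1987os}, which the paper already relies on in Proposition~\ref{G-exact-char-amenable}---but it amounts to reconstructing the proof of Th\'{e}or\`{e}me 4.5 rather than citing it.
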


\begin{remark} 
We cannot add the condition that $A$ is strongly amenable  to the equivalent conditions in Theorem~\ref{big nuclear list} by Suzuki's examples in \cite{Suzuki:2018qo}.   We do not know if we can add $A\rtimes_{\max}G=A\rtimes_{\red}G$ to this list of equivalent conditions: compare Remarks \ref{ca tens} and \ref{nuc rem}.
\end{remark}

The proofs will proceed via a series of ancillary lemmas and propositions.  The first few results compare $G$-injectivity and the $G$-WEP to the non-equivariant versions (sometimes also in the presence of amenability).  

\begin{lemma}\label{pro:G-WEP=>WEP}
Let $G$ be a group, $H\sbe G$ a subgroup and $A$ a $G$-algebra. If $A$ has the $G$-WEP (respectively, is $G$-injective), then $A$ also has the $H$-WEP (respectively, is $H$-injective) with the restricted action.
\end{lemma}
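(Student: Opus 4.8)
The plan is to avoid trying to split the given $H$\nb-embedding directly, and instead to realise $A$ as an equivariant retract of a concrete ``standard form'' injective algebra whose restriction to $H$ is manifestly $H$\nb-injective. The point is that $G$\nb-injectivity and the $G$\nb-WEP are detected by retraction onto such a model, and good behaviour under restriction can then be checked by hand.

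First I would fix a faithful representation $\rho\colon A\into\Bd(\h)$ and set $I:=\Bd(\h)$, which is injective. Let $E:=\ell^\infty(G,I)$, equipped with the translation action $(\gamma_g f)(x):=f(g^{-1}x)$; by Example \ref{inj ex} this is $G$\nb-injective. The formula $J(a)(x):=\rho(\alpha_{x^{-1}}(a))$ defines a $G$\nb-equivariant $*$\nb-embedding $J\colon A\into E$ (equivariance is the standard computation for the translation action, and $J$ is faithful since $J(a)(e)=\rho(a)$). Applying the hypothesis to this single $G$\nb-embedding produces the retraction data I need: if $A$ is $G$\nb-injective there is an equivariant conditional expectation $Q\colon E\onto A$ with $Q\circ J=\id_A$, while if $A$ has the $G$\nb-WEP there is an equivariant ccp map $W\colon E\to A^{**}$ with $W\circ J$ equal to the canonical inclusion $A\into A^{**}$.

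The crux is to show that $E$, viewed as an $H$\nb-algebra via the restricted action, is $H$\nb-injective. For this I would choose a transversal $T$ for the right cosets $H\bs G$, so that $G=\bigsqcup_{t\in T}Ht$ and $x\mapsto(h_x,t_x)$ identifies $G$ with $H\times T$. Transporting $E$ along this bijection yields an $H$\nb-equivariant isomorphism $E\cong\ell^\infty(H,\ell^\infty(T,I))$ under which the restricted $H$\nb-action becomes translation in the $H$\nb-variable, since left translation by $h_0\in H$ fixes each coset ($h_0Ht=Ht$) and so acts trivially on the $T$\nb-index. As $J':=\ell^\infty(T,I)$ is again injective, Example \ref{inj ex} applies and shows that $\ell^\infty(H,J')$, hence $E$ with its restricted action, is $H$\nb-injective; in particular it enjoys the $H$\nb-equivariant extension property, by the same explicit construction used for Examples \ref{basic exes} and \ref{inj ex}.

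Finally I would deduce the conclusion from these two ingredients. Given any $H$\nb-embedding $\iota\colon A\into B$ (of $A$ with its restricted action), the $H$\nb-equivariant extension property of $E$ lets me extend the $H$\nb-map $J$ along $\iota$ to an $H$\nb-ccp map $\widetilde J\colon B\to E$ with $\widetilde J\circ\iota=J$. Then $P:=Q\circ\widetilde J$ (respectively $P:=W\circ\widetilde J$) is an $H$\nb-ccp map $B\to A$ (respectively $B\to A^{**}$) satisfying
$$
P\circ\iota=Q\circ\widetilde J\circ\iota=Q\circ J=\id_A
$$
(respectively $W\circ J=$ the canonical inclusion), which is exactly what is required for $H$\nb-injectivity (respectively the $H$\nb-WEP); in the injective case $P$ is automatically a conditional expectation, being a ccp splitting of a subalgebra inclusion. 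I expect the main obstacle to be the middle step, namely recognising that the restriction of $\ell^\infty(G,I)$ is again of standard form over $H$: this reframing is essential, because a naive equivariant splitting of the coinduced embedding spreads its mass over the cosets of $H$ and provably fails to restrict to the identity on $A$. The remaining points (equivalence of the retraction and extension forms of injectivity, and that a ccp splitting of a subalgebra inclusion is a conditional expectation) are standard.
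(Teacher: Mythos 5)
Your proof is correct and follows essentially the same route as the paper: both arguments embed $A$ equivariantly into $\ell^\infty(G,\Bd(\h))$ via $a\mapsto\big(g\mapsto\rho(\alpha_{g^{-1}}(a))\big)$, apply the $G$-WEP (resp.\ $G$-injectivity) to this single embedding to obtain a retraction onto $A^{**}$ (resp.\ $A$), and then use $H$-injectivity of $\ell^\infty(G,\Bd(\h))$ with the restricted action to extend along an arbitrary $H$-embedding before composing. The only difference is that where the paper cites \cite[Remark 6.3]{Buss:2018nm} for that $H$-injectivity, you prove it directly via the coset identification $\ell^\infty(G,\Bd(\h))\cong\ell^\infty(H,\ell^\infty(T,\Bd(\h)))$, which is a valid, self-contained substitute.
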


\begin{proof}
Let $\pi:A\to \Bd(K)$ be a faithful representation (ignoring the $G$-action).  Equip $B:=\ell^\infty(G,\Bd(K))$ with the translation $G$-action defined by
$$
(\gamma_gf)(h):=f(g^{-1}h)
$$
and then consider the canonical $G$-embedding 
$$
\widetilde\pi\colon A\into B, \quad \tilde\pi(a)(g):=\pi(\alpha_g^{-1}(a)).
$$ 
If $A$ has the $G$-WEP, then there exists a ccp $G$-map $P\colon B\to A^{**}$ with $P\circ\tilde\pi$ equal to the canonical embedding $A\into A^{**}$.   Note that $B$ is $H$-injective as an $H$-algebra with the restricted $H$-action: this is proved in \cite[Remark 6.3]{Buss:2018nm}.  Hence if $A$ embeds $H$-equivariantly into some $H$-algebra $C$, then by the $H$-injectivity of $B$ the dashed arrow below can be filled in 
$$
\xymatrix{ C \ar@{-->}[dr] & \\ A \ar[u] \ar[r]^-{\widetilde{\pi}} & B }
$$
with a ccp $H$-map.  Composing this with $P:B\to A^{**}$ yields the desired ccp $H$-map $C\to A^{**}$ extending the canonical inclusion $A\into A^{**}$. 

The assertion on injectivity can be proved in essentially the same way.
\end{proof}

\begin{corollary}\label{rem:G-WEP-Injectivity}
If a $G$-algebra has the $G$-WEP (respectively, is $G$-injective) then it has the WEP (respectively, is injective). 
\end{corollary}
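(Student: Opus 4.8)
My plan is to obtain this corollary as the special case of Lemma~\ref{pro:G-WEP=>WEP} in which the subgroup $H$ is trivial. The first step is simply to take $H=\{e\}$, the trivial subgroup of $G$. With respect to the one-point group the equivariance condition on maps is vacuous, so a ccp $\{e\}$-map is nothing but a ccp map, and an $\{e\}$-embedding is just an (injective) $*$-homomorphism. Unwinding Definition~\ref{inj def} with this trivial group then shows that the $\{e\}$-WEP coincides verbatim with the ordinary WEP, and that $\{e\}$-injectivity coincides with ordinary injectivity.

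Granting this identification, the statement is immediate: applying Lemma~\ref{pro:G-WEP=>WEP} with $H=\{e\}$ says precisely that the $G$-WEP implies the $\{e\}$-WEP, i.e.\ the WEP, and that $G$-injectivity implies $\{e\}$-injectivity, i.e.\ injectivity. I do not anticipate any real obstacle here, since Lemma~\ref{pro:G-WEP=>WEP} already carries out all of the substantive work (the construction of the embedding into $\ell^\infty(G,\Bd(K))$, the use of its relative injectivity, and the composition with the splitting $P$). The only thing requiring checking is the routine observation that the equivariant definitions degenerate to the non-equivariant ones when the acting group is a single point.

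As a sanity check, this specialisation is also consistent with the unitality phenomena already recorded: Lemma~\ref{uni rem} gives that a $G$-injective algebra is unital, which matches the classical fact that injective $C^*$-algebras are unital. If one preferred to avoid invoking the subgroup version, one could instead rerun the argument of Lemma~\ref{pro:G-WEP=>WEP} directly at the level of the trivial group, embedding $A$ into the (non-equivariantly) injective algebra $\ell^\infty(G,\Bd(K))$ and factoring an arbitrary inclusion $A\into C$ through it; but this merely reproduces the lemma's proof with $H=\{e\}$, so the clean route is simply to quote Lemma~\ref{pro:G-WEP=>WEP}.
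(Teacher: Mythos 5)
Your proposal is correct and coincides with the paper's own proof, which reads in its entirety: ``Take $H$ to be the trivial group in Lemma~\ref{pro:G-WEP=>WEP}.'' Your additional observation that the $\{e\}$-equivariant notions degenerate verbatim to the ordinary WEP and injectivity is exactly the (implicit) point the paper relies on, so nothing further is needed.
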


\begin{proof}
Take $H$ to be the trivial group in Lemma \ref{pro:G-WEP=>WEP}.
\end{proof}

The following result is closely related to \cite[Proposition~4.1]{Anantharaman-Delaroche:1979aa}; we need something a little different, however, so give a direct proof.  The statement essentially says that a non-equivariant ccp splitting can be bootstrapped up to an equivariant splitting in the presence of amenability.

\begin{lemma}\label{non-equi to equi} 
Let $A$ be an amenable  $G$-algebra.  Let $A\subseteq B$ (respectively, $A^{**}\subseteq B$) be a $G$-embedding. Then if the embedding is relatively weakly injective (respectively, relatively injective), it is also relatively weakly $G$-injective (respectively, relatively $G$-injective).
\end{lemma}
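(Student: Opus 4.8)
The plan is to use amenability to \emph{average} the given non-equivariant splitting into an equivariant one, factoring everything through the algebra $\ell^\infty(G,A^{**})$ carrying the diagonal action $(\widetilde\alpha_h f)(g)=\alpha_h(f(h^{-1}g))$. I would treat both cases at once: write $\kappa$ for the canonical embedding in play (the inclusion $A\into A^{**}$ in the weakly relatively injective case, or $\id_{A^{**}}$ in the relatively injective case), so that in either case the hypothesis supplies a non-equivariant ccp map $P\colon B\to A^{**}$ with $P\circ\iota=\kappa$, and the goal is an equivariant ccp $Q\colon B\to A^{**}$ with $Q\circ\iota=\kappa$. As a first step (using no amenability) I would build an equivariant replacement for $P$ valued in $\ell^\infty(G,A^{**})$, namely
\[
\Psi\colon B\to\ell^\infty(G,A^{**}),\qquad \Psi(b)(g)=\alpha_g\big(P(\beta_{g^{-1}}(b))\big).
\]
A direct check shows $\Psi$ is ccp and, crucially, $G$-equivariant (the twist by $\alpha_g$ and $\beta_{g^{-1}}$ is exactly what intertwines the $B$-action with $\widetilde\alpha$), and equivariance of $\iota$ together with $P\circ\iota=\kappa$ gives that $\Psi\circ\iota(a)$ is the constant function with value $\kappa(a)$.

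Next I bring in amenability. By Lemma \ref{l2 amen} I choose a net $(\xi_i\colon G\to Z(A^{**}))$ of finitely supported functions with $\langle\xi_i,\xi_i\rangle\le1$ and $\langle\xi_i,\widetilde\alpha_g\xi_i\rangle\to1$ ultraweakly, and set
\[
E_i\colon \ell^\infty(G,A^{**})\to A^{**},\qquad E_i(f)=\sum_{g\in G}\xi_i(g)^*f(g)\,\xi_i(g).
\]
Each $E_i$ is ccp (indeed $E_i(1)=\langle\xi_i,\xi_i\rangle\le 1$), and since the $\xi_i(g)$ are central, $E_i$ sends the constant function with value $m$ to $m\langle\xi_i,\xi_i\rangle$, which converges ultraweakly to $m$. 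Because the ccp maps $\ell^\infty(G,A^{**})\to A^{**}$ form a point-ultraweakly compact set, I pass to a cluster point $E$, which is then ccp and fixes constants exactly. Setting $Q:=E\circ\Psi$ gives a ccp map with $Q\circ\iota=\kappa$, and it remains only to see that $E$ — hence $Q$ — is equivariant.

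This last point is the heart of the matter and the step I expect to be the main obstacle. Reindexing the defining sum, the defect takes the form
\[
E_i(\widetilde\alpha_h f)-\alpha_h(E_i(f))=\sum_{g\in G} d_g\big(|\xi_i(g)|^2-|(\widetilde\alpha_h\xi_i)(g)|^2\big),\qquad d_g:=\alpha_h\big(f(h^{-1}g)\big),
\]
where centrality of $\xi_i(g)$ and $(\widetilde\alpha_h\xi_i)(g)$ lets me pull the $d_g$ out. The difficulty is that the available convergence is only ultraweak, so no norm estimate is possible; instead I would test against an arbitrary normal state $\omega$, split $|\xi_i(g)|^2-|(\widetilde\alpha_h\xi_i)(g)|^2$ via $a^*a-b^*b=a^*(a-b)+(a-b)^*b$ with $a=\xi_i(g)$, $b=(\widetilde\alpha_h\xi_i)(g)$, and apply Cauchy--Schwarz for the positive form $(\mathbf x,\mathbf y)\mapsto\sum_g\omega(x_g^*y_g)$. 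Centrality enters a second, essential time: for central $p\ge0$ one has the operator inequality $d_g\,p\,d_g^*\le\|f\|^2 p$, which bounds the $\xi_i$-factors uniformly, while the difference-factor is governed by $\omega\big(\langle\xi_i-\widetilde\alpha_h\xi_i,\xi_i-\widetilde\alpha_h\xi_i\rangle\big)$.

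Finally I would verify that this last quantity tends to $0$: expanding the inner product and using $\langle\widetilde\alpha_h\xi_i,\widetilde\alpha_h\xi_i\rangle=\alpha_h\langle\xi_i,\xi_i\rangle$ together with $\langle\xi_i,\widetilde\alpha_g\xi_i\rangle\to1$ shows $\langle\xi_i-\widetilde\alpha_h\xi_i,\xi_i-\widetilde\alpha_h\xi_i\rangle\to0$ ultraweakly, and normality of $\omega$ then forces $\omega$ of it to $0$. Hence $\omega\big(E_i(\widetilde\alpha_h f)-\alpha_h(E_i(f))\big)\to0$ for every normal state and every fixed $f,h$, so the cluster point $E$ is equivariant and $Q=E\circ\Psi$ is the desired equivariant ccp splitting. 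The relatively injective case is word-for-word identical with $\kappa=\id_{A^{**}}$.
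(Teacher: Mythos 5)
Your proof is correct and takes essentially the same route as the paper's: the paper forms the maps $\psi_i(b)=\langle \xi_i, m(b)\xi_i\rangle$ directly, where $(m(b)\xi)(g)=\alpha_g(\phi(\beta_{g^{-1}}(b)))\xi(g)$ on $\ell^2(G,A^{**})$ --- these coincide exactly with your composites $E_i\circ\Psi$ --- and then passes to a point-ultraweak cluster point, checking the splitting property via centrality of the $\xi_i(g)$ and equivariance via the same Cauchy--Schwarz estimates together with the ultraweak convergence $\langle \xi_i-\widetilde{\alpha}_h\xi_i,\,\xi_i-\widetilde{\alpha}_h\xi_i\rangle\to 0$. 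Your factorization through $\ell^\infty(G,A^{**})$ is only a cosmetic repackaging of the same averaging argument.
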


\begin{proof}
We will just look at the case where $A^{**}\subseteq B$ is a relatively injective $G$-embedding; the other case is essentially the same.  Relative injectivity gives us a ccp map $\phi:B\to A^{**}$ such that the restriction of $\phi$ to $A^{**}$ is the identity; our task is to replace $\phi$ with a ccp $G$-map without changing it on $A^{**}$.

Let $\ell^2(G,A^{**})$ be as in Definition \ref{standard mod}, equipped with the associated action $\widetilde{\alpha}$ defined there.  For each $b\in B$, define a multiplication-type operator $m(b)$ on $\ell^2(G,A^{**})$ by the formula
$$
(m(b)\xi)(g):=\alpha_g(\phi(\beta_{g^{-1}}(b)))\xi(g).
$$
Then it is not difficult to see that $m$ defines a ccp map $m:B\to \Bd(\ell^2(G,A^{**}))$ from $B$ to the adjointable operators on $\ell^2(G,A^{**})$.  Let now $(\xi_i)_{i\in I}$ be a net as in the definition of amenability, so each $\xi_i$ is a finitely supported function $\xi_i:G\to Z(A^{**})$ such that $\langle \xi_i,\xi_i\rangle \leq 1$ for all $i$, and so that $\langle \xi_i,\widetilde{\alpha}_{g}(\xi_i)\rangle$ converges ultraweakly to $1$ for all $g\in G$.  For each $i$, define a map
$$
\psi_i:B\to A^{**},\quad b\mapsto \langle \xi_i,m(b)\xi_i\rangle.
$$
One then checks that the net $(\psi_i)$ consists of ccp maps, and so, by \cite[Theorem 1.3.7]{Brown:2008qy} and after passing to a subnet if necessary, 
has an ultraweak limit point, which is also a ccp map $\psi:B\to A^{**}$.  We claim that this limit has the right properties.

First, let us check that if $a$ is an element of $A^{**}$, then $\psi(a)=a$.  Indeed, in this case $m(a)$ is just the operator of left-multiplication by $a$, and so we have 
$$
\psi_i(a)=\langle \xi_i,a\xi_i\rangle=\sum_{g\in G}\xi_i(g)^*a\xi_i(g).
$$
for all $i$.  As $\xi$ takes values in $Z(A^{**})$, this just equals $\langle \xi_i,\xi_i\rangle a$, however, which converges ultraweakly to $a$ as $i$ tends to infinity.  

It remains to check that $\psi$ is equivariant.  Let then $b\in B$ and $h\in G$.  Then 
\begin{align*}
\psi_i(\beta_h(b))=\langle \xi_i,m(\beta_h(b))\xi_i\rangle=\sum_{g\in G} \xi_i(g)^*\alpha_g\phi(\beta_{g^{-1}h}(b))\xi_i(g).
\end{align*}
Making the substitution $k=h^{-1}g$, this equals 
\begin{align*}
\sum_{k\in G} \xi_i(hk)^*\alpha_{hk}\phi(\beta_{k^{-1}}(b))\xi_i(hk) & =\alpha_h\Big(\sum_{k\in G} (\widetilde{\alpha}_{h^{-1}}\xi_i)(k)^*\alpha_{k}\phi(\beta_{k^{-1}}(b))(\widetilde{\alpha}_{h^{-1}}\xi_i)(k)\Big) \\ & = \alpha_h(\langle \widetilde{\alpha}_{h^{-1}}\xi_i,m(b)\widetilde{\alpha}_{h^{-1}}\xi_i\rangle).
\end{align*}
To prove equivariance, it thus suffices to show that 
\begin{equation}\label{equi diff}
\langle \widetilde{\alpha}_{h^{-1}}\xi_i,m(b)\widetilde{\alpha}_{h^{-1}}\xi_i\rangle - \langle \xi_i,m(b)\xi_i\rangle
\end{equation}
tends ultraweakly to zero.  This follows as we have the identity 
$$
\langle \widetilde{\alpha}_{h^{-1}}\xi_i - \xi_i,\widetilde{\alpha}_{h^{-1}}\xi_i - \xi_i\rangle=\alpha_h(\langle \xi_i,\xi_i\rangle) +\langle \xi_i,\xi_i\rangle-\langle \xi_i,\widetilde{\alpha}_{h^{-1}}\xi_i \rangle-\langle \widetilde{\alpha}_{h^{-1}}\xi_i ,\xi_i\rangle,
$$
and the right hand side tends ultraweakly to zero.  The expression in line \eqref{equi diff} therefore tends ultrweakly to zero using appropriate versions of the Cauchy-Schwarz inequality similarly to the proof of Lemma \ref{com amen}.
\end{proof}

\begin{corollary}\label{wep to g wep}
Let $A$ be an amenable  $G$-algebra.  Then if $A$ has the WEP (respectively, if $A^{**}$ is injective), then $A$ has the $G$-WEP (respectively, $A^{**}$ is $G$-injective). \qed
\end{corollary}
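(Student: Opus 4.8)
The plan is to deduce this immediately from Lemma \ref{non-equi to equi}, using the reformulations of the $G$-WEP and of $G$-injectivity recorded in Remark \ref{inj def 2}. All of the genuine work has already been done in Lemma \ref{non-equi to equi}, whose averaging argument is precisely what converts a non-equivariant ccp splitting into an equivariant one in the presence of amenability; the corollary is then just a matter of matching hypotheses against the definitions.

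First I would recall from Remark \ref{inj def 2} that, to show $A$ has the $G$-WEP, it suffices to verify that every $G$-embedding $A\into B$ is relatively weakly $G$-injective in the sense of Definition \ref{rel inj}; and to show $A^{**}$ is $G$-injective, it suffices to verify that every $G$-embedding $A^{**}\into B$ is relatively $G$-injective.

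So I would fix an arbitrary $G$-embedding $A\into B$ (respectively $A^{**}\into B$). Forgetting the $G$-actions, this is in particular an ordinary embedding of $C^*$-algebras. The hypothesis that $A$ has the WEP (respectively that $A^{**}$ is injective) says precisely that there is a ccp map $P\colon B\to A^{**}$ with $P\circ\iota$ equal to the canonical inclusion $A\into A^{**}$ (respectively a ccp conditional expectation $P\colon B\to A^{**}$); in the terminology of Definition \ref{rel inj} this is exactly the statement that the embedding is relatively weakly injective (respectively relatively injective). Since $A$ is amenable, Lemma \ref{non-equi to equi} then upgrades this to relative weak $G$-injectivity (respectively relative $G$-injectivity) of the very same embedding.

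As the $G$-embedding was arbitrary, Remark \ref{inj def 2} yields that $A$ has the $G$-WEP (respectively that $A^{**}$ is $G$-injective), completing the argument. I do not expect any real obstacle here: the only delicate point, namely producing an equivariant splitting from a non-equivariant one, is entirely absorbed into Lemma \ref{non-equi to equi}, which is why the corollary can be stated with a \qed and no separate proof. The one thing to be careful about is simply to apply the two halves of Lemma \ref{non-equi to equi} to the correct embedding ($A\into B$ for the WEP statement, $A^{**}\into B$ for the injectivity statement), so that ``relatively injective'' is interpreted as a splitting onto $A^{**}$ rather than onto $A$.
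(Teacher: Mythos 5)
Your proposal is correct and matches the paper's intended argument exactly: the corollary is stated with a \qed precisely because it follows immediately from Lemma \ref{non-equi to equi} combined with the reformulation in Remark \ref{inj def 2}, which is just what you do. Your care in applying the two halves of the lemma to the right embeddings ($A\into B$ for the WEP, $A^{**}\into B$ for injectivity) is also the correct reading of the statement.
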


The next lemma is closely related to Lemma \ref{unital lem}.

\begin{lemma}\label{lem:G-WEP=>UCP}
Let $A$ be a $G$-algebra and assume $A$ is $G$-injective (respectively, has the $G$-WEP).  Let $C$ be any unital $G$-algebra.  Then there is a ucp $G$-map $C\to Z(A)$ (respectively, $C\to Z(A^{**})$).
\end{lemma}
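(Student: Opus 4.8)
The plan is to follow the proof of Lemma \ref{unital lem} closely, but with the target representation replaced by the identity on $A$ (respectively the canonical embedding $A\into A^{**}$), extracting the centre at the end via a multiplicative-domain argument. In both cases I start from the same two maps: the canonical $G$-embedding $\iota\colon A\into C\otimes A$, $a\mapsto 1_C\otimes a$ (equivariant since $1_C$ is $G$-fixed, and isometric), together with the unital equivariant $*$-homomorphism $j\colon C\to\M(C\otimes A)$, $c\mapsto c\otimes 1_A$. The desired map will always be $\phi=\bar P\circ j$, where $\bar P$ is a ucp $G$-extension of the splitting provided by $G$-injectivity or the $G$-WEP.

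In the $G$-injective case, $A$ is unital by Lemma \ref{uni rem}, so $C\otimes A$ is unital and $j$ already takes values in $C\otimes A$. Applying $G$-injectivity to $\iota$ yields an equivariant conditional expectation $P\colon C\otimes A\to A$ with $P\circ\iota=\id_A$; in particular $P$ is unital, so $\phi:=P\circ j$ is a ucp $G$-map $C\to A$. Since $P\circ\iota=\id_A$ is a $*$-homomorphism, $\iota(A)=1_C\otimes A$ lies in the multiplicative domain of $P$, and hence for $c\in C$, $a\in A$,
$$\phi(c)\,a=P(c\otimes 1_A)P(\iota(a))=P\big((c\otimes 1_A)\iota(a)\big)=P(c\otimes a)=P\big(\iota(a)(c\otimes 1_A)\big)=a\,\phi(c),$$
so $\phi(C)\sbe Z(A)$.

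For the $G$-WEP case $A$ need not be unital, and the $G$-WEP gives only a ccp $G$-map $P\colon C\otimes A\to A^{**}$ with $P\circ\iota=\kappa$, the canonical inclusion $\kappa\colon A\into A^{**}$. The first step is to promote $P$ to a ucp $G$-map $\bar P\colon\M(C\otimes A)\to A^{**}$: as $A^{**}$ is a von Neumann algebra, $P$ has a unique normal ccp extension $(C\otimes A)^{**}\to A^{**}$ (compose the second adjoint $P^{**}$ with the canonical normal projection $(A^{**})^{**}\to A^{**}$ onto the central summand $A^{**}$), which I then restrict along $\M(C\otimes A)\sbe(C\otimes A)^{**}$. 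Equivariance is preserved at every step, and unitality holds because $P(1_C\otimes e_i)=e_i\to 1$ ultraweakly for an approximate unit $(e_i)$ of $A$, so $\bar P(1)=1$. Setting $\phi:=\bar P\circ j\colon C\to A^{**}$ gives a ucp $G$-map; since $\bar P\circ\iota=\kappa$ is a $*$-homomorphism, $\iota(A)$ again lies in the multiplicative domain of $\bar P$, and the same computation gives $\phi(c)\kappa(a)=\bar P(c\otimes a)=\kappa(a)\phi(c)$ for all $a\in A$. As $\kappa(A)$ is ultraweakly dense in $A^{**}$ and multiplication is separately ultraweakly continuous, $\phi(c)$ commutes with all of $A^{**}$, so $\phi(C)\sbe Z(A^{**})$.

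The only genuinely new point compared with Lemma \ref{unital lem} — and the step I expect to need the most care — is the multiplier extension in the $G$-WEP case: one must make sense of $j(c)=c\otimes 1_A$ when $A$ is non-unital and ensure that $\bar P$ lands back inside $A^{**}$ (rather than in $(A^{**})^{**}$, or in some ambient $\Bd(H)$) while staying equivariant. Using the normal extension to the enveloping von Neumann algebra together with the fact that $A^{**}$ is a canonical direct summand of its own bidual resolves this cleanly; alternatively, one could invoke a Lance-type strict-extension result exactly as in the proof of Lemma \ref{unital lem}.
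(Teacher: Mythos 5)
Your proof is correct. In the $G$-injective case it coincides with the paper's argument: the same embedding $\iota\colon A\into C\otimes A$, the same use of Lemma \ref{uni rem} for unitality, and the same multiplicative-domain computation. In the $G$-WEP case, however, you take a genuinely different technical route. The paper never extends $P$ to $\M(C\otimes A)$: it fixes an approximate unit $(e_i)$ of $A$, forms the ccp maps $P_i\colon C\to A^{**}$, $c\mapsto P(c\otimes e_i)$, and takes a point-ultraweak cluster point $Q$ of this net; centrality of the image then follows from $aP_i(c)-P_i(c)a=P(c\otimes(ae_i-e_ia))\to 0$, unitality from $P_i(1)=e_i\to 1$ ultraweakly, and equivariance from $\alpha_g(P_i(c))-P_i(\gamma_g(c))=P(\gamma_g(c)\otimes(\alpha_g(e_i)-e_i))$. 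Your route instead promotes $P$ to a normal ucp $G$-map $\bar P$ on $\M(C\otimes A)$ via $\pi\circ P^{**}$, where $\pi\colon (A^{**})^{**}\to A^{**}$ is the canonical normal projection, and then restricts along $j(c)=c\otimes 1_A$; this is precisely the strategy of Lemma \ref{unital lem} with the target $\Bd(H)$ replaced by $A^{**}$, and the points that need care (landing back in $A^{**}$, equivariance of $P^{**}$ and of $\pi$, unitality via $P(1_C\otimes e_i)=e_i\to 1$ ultraweakly combined with normality of $\bar P$) are all handled correctly. As for what each approach buys: yours gets equivariance and unitality essentially for free from functoriality of biduals, whereas the paper's cluster-point argument must verify equivariance by hand --- and, as written, that verification needs $\|\alpha_g(e_i)-e_i\|\to 0$, i.e.\ an approximately $G$-invariant approximate unit (obtainable by a standard convexity argument, since $\alpha_g(e_i)-e_i\to 0$ weakly), a wrinkle your construction sidesteps entirely; conversely, the paper's argument is more elementary, avoiding multiplier algebras and biduals of biduals altogether.
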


\begin{proof}
Assume first that $A$ has the $G$-WEP. Let $B$ be the $G$-algebra $B:=C\otimes A$ equipped with the diagonal action $\gamma\otimes \alpha$ where $\gamma$ denotes the action on $C$, and $\alpha$ the action on $A$.  Consider the canonical $G$-embedding 
$$
\iota\colon A\into B, \quad a\mapsto 1\otimes a.
$$
Since $A$ has the $G$-WEP, there is a ccp $G$-map $P\colon B\to A^{**}$ such that $P\circ\iota$ coincides with the canonical embedding $A\into A^{**}$.  Fix an approximate unit $(e_i)_{i\in I}$ for $A$, and for each $i$, define 
$$
P_i:C\to A^{**},\quad c\mapsto P(c\otimes e_i).
$$
The net $(P_i)$ of ccp maps has a point-ultraweak limit, say $Q:C\to A^{**}$, which we claim is the required map.  As $Q$ is automatically ccp, we must check three things: that $Q$ has image in $Z(A^{**})$; $Q$ is unital; and that $Q$ is equivariant.

Indeed, note first that the subalgebra $\{1\otimes a\mid a\in A\}$ of $B$ is in the multiplicative domain of $P$, whence for each $a\in A$, $c\in C$, and $i\in I$,
$$
aP_i(c)-P_i(c)a=aP(c\otimes e_i)-P(c\otimes e_i)a=P(c\otimes (ae_i-e_ia)),
$$
which tends to zero (in norm) as $i$ tends to infinity.  Hence the image of $Q$ commutes with $A$, and thus with all of $A^{**}$, so is central.  To see that $Q$ is unital, note that $P_i(1)=P(1\otimes e_i)=e_i$, and that any approximate unit for $A$ converges ultraweakly to the unit of $A^{**}$.  Finally, to see that $Q$ is equivariant, we note that $P$ is equivariant, whence if $\alpha$ denotes the $G$-actions on both $A$ and $A^{**}$, and $\gamma$ the action on $C$, then for any $c\in C$ and $i\in I$, we have 
$$
\alpha_g(P_i(c))-P_i(\gamma_g(c))=\alpha_g(P(c\otimes e_i))-P(\gamma_g(c)\otimes e_i))=P(\gamma_g(c)\otimes (\alpha_g(e_i)-e_i)),
$$
which tends to zero (in norm) as $i$ tends to infinity.

The case where $A$ is $G$-injective is similar and easier because in this case $A$ is now unital (see Lemma \ref{uni rem} above).  Indeed, consider again the same embedding $\iota\colon A\into B$ as above. Notice that $A$, $B$ and $\iota$ are unital. Since $A$ is now $G$-injective, we get a ucp $G$-map $P\colon B\to A$ satisfying $P\circ\iota=\id_A$. Since $\iota$ is unital, so is $P$ and the same argument as before shows that $P(C)\sbe Z(A)$.  Composing with the canonical embedding $C\into B$ yields the desired ucp $G$-map $Q:C\to Z(A)$.
\end{proof}

\begin{corollary}\label{cor:G-WEP=>amenable}
If $G$ is an exact group and $A$ is a $G$-algebra that is $G$-injective (respectively, has the $G$-WEP), then it is strongly amenable (respectively, amenable).
\end{corollary}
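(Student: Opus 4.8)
The plan is to deduce this immediately from the two main tools already at our disposal: the universal ucp $G$-map produced by Lemma \ref{lem:G-WEP=>UCP}, and the exactness characterization of (strong) amenability in Proposition \ref{G-exact-char-amenable}. The bridge between them is the observation that $\ell^\infty(G)$ is a \emph{unital} $G$-algebra, so it is an admissible choice of test algebra $C$ in Lemma \ref{lem:G-WEP=>UCP}.

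First I would treat the $G$-WEP case, which is the cleaner of the two. Applying Lemma \ref{lem:G-WEP=>UCP} with $C=\ell^\infty(G)$ yields a ucp $G$-map $\ell^\infty(G)\to Z(A^{**})$. Since $G$ is exact, Proposition \ref{G-exact-char-amenable}(ii) then says precisely that the existence of such a map is equivalent to amenability of $A$, giving the conclusion. For the $G$-injective case I would argue similarly: Lemma \ref{lem:G-WEP=>UCP} now produces a ucp $G$-map $\ell^\infty(G)\to Z(A)$. The one extra point to record is that a $G$-injective $A$ is unital by Lemma \ref{uni rem}, so $\M(A)=A$ and hence $Z(A)=Z\M(A)$; thus the map we obtained is in fact a ucp $G$-map $\ell^\infty(G)\to Z\M(A)$. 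Exactness of $G$ and Proposition \ref{G-exact-char-amenable}(i) then give strong amenability of $A$.

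There is essentially no hard step here, as the corollary is by design just the composite of Lemma \ref{lem:G-WEP=>UCP} and Proposition \ref{G-exact-char-amenable}; the only thing one must be careful about is matching the target algebra correctly in each case (the $G$-WEP lands in $Z(A^{**})$, which pairs with the amenability half of Proposition \ref{G-exact-char-amenable}, while $G$-injectivity lands in $Z(A)=Z\M(A)$, which pairs with the strong amenability half). If a subtlety is to be flagged, it is the invocation of unitality via Lemma \ref{uni rem} to legitimately identify $Z(A)$ with $Z\M(A)$ in the injective case; without this one would not directly be in the hypothesis of Proposition \ref{G-exact-char-amenable}(i).
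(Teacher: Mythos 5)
Your proof is correct and is exactly the paper's argument: the paper's proof is the one-line observation that the corollary follows from Proposition \ref{G-exact-char-amenable} and Lemma \ref{lem:G-WEP=>UCP} applied with $C=\ell^\infty(G)$. The only difference is that you helpfully spell out the identification $Z(A)=Z\M(A)$ via unitality (Lemma \ref{uni rem}) in the $G$-injective case, which the paper leaves implicit.
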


\begin{proof}
This follows directly from Proposition \ref{G-exact-char-amenable} and Lemma \ref{lem:G-WEP=>UCP} in the special case $C=\ell^\infty(G)$.
\end{proof}

We are finally ready to prove our main theorems from the start of this section.

\begin{proof}[Proof of Theorem \ref{g wep vs wep}]
Assume first that $A$ has the $G$-WEP and that $G$ is exact.  Then $A$ has the WEP by Corollary \ref{rem:G-WEP-Injectivity}, and is amenable by Corollary \ref{cor:G-WEP=>amenable}.

Conversely, if $A$ has the WEP and is amenable, then it has the $G$-WEP by Corollary \ref{wep to g wep}.  As $A$ is unital, the existence of an amenable  action implies that $G$ is exact by Theorem \ref{com amen ex}.
\end{proof}

\begin{remark}\label{g wep to wep rem}
Say $G$ is an exact group.  Then the above proof shows that for any (not necessarily unital) $G$-algebra $A$, the following are equivalent:
\begin{enumerate}[(i)]
\item $A$ is amenable and has the WEP;
\item $A$ has the $G$-WEP.
\end{enumerate}
In other words, if we are willing to assume exactness, we can drop the unitality assumption from Theorem \ref{g wep vs wep}.  The above equivalences do not hold (for unital algebras) in the non-exact case: indeed, $A=\ell^\infty(G)$ is $G$-injective, so in particular has the $G$-WEP, but it is not amenable if $G$ is not exact.  On the other hand, the equivalence of
\begin{enumerate}[(i)]
\item $A$ is amenable and has the WEP, and
\item $A$ has the $G$-WEP and $G$ is exact
\end{enumerate}
from Theorem \ref{g wep vs wep} do not hold in the non-unital case: $A=C_0(G)$ is amenable and has the WEP, so satisfies the first condition whether $G$ is exact or not.
\end{remark}

\begin{proof}[Proof of Theorem \ref{g inj vs inj}]
Assume first that $A^{**}$ is $G$-injective.  Then $A^{**}$ is injective by Corollary \ref{rem:G-WEP-Injectivity}.  Moreover, $G$-injectivity of $A^{**}$ gives an equivariant conditional expectation $\ell^\infty(G,A^{**})\to A^{**}$ as in \cite[Th\'{e}or\`{e}me 3.3, part (e)]{Anantharaman-Delaroche:1987os}, which implies amenability.  

Conversely, say $A^{**}$ is injective and amenable.  Then $A^{**}$ is $G$-injective by Corollary \ref{wep to g wep}.

Finally, note, if $A$ is unital, then amenability of $A$ implies exactness of $G$ by Theorem \ref{com amen ex}.
\end{proof}

\begin{proof}[Proof of Theorem \ref{big nuclear list}]
As $A$ is nuclear, it has the WEP (see for example \cite[Corollary 3.6.8]{Brown:2008qy}).  Hence the equivalence of \eqref{bn ada} and \eqref{bn gwep} follows from Theorem \ref{g wep vs wep}.  Similarly, if $A$ is nuclear than $A^{**}$ is injective, and so the equivalence of \eqref{bn ginj} and \eqref{bn ada} follows from Theorem \ref{g inj vs inj}.

The fact that \eqref{bn ginj} implies \eqref{bn ** boa} follows from Corollary~\ref{cor:G-WEP=>amenable}, and \eqref{bn ** boa} implies \eqref{bn ** ada} is trivial.  Assuming that $A^{**}$ is amenable, note that the universal property of $A^{****}$ gives a normal equivariant surjective $*$-homomorphism $A^{****}\onto A^{**}$ splitting the canonical inclusion $A^{**}\into A^{****}$.  This restricts to a normal $*$-homomorphism $Z(A^{****})\onto Z(A^{**})$, from which it follows that $A$ is amenable, giving \eqref{bn ada}.  We now have that conditions \eqref{bn ada} through \eqref{bn ** ada} are equivalent.  

Finally, note that the equivalence of \eqref{bn ada} to both \eqref{bn nuc} and \eqref{bn cp inj} was established by Anantharaman-Delaroche in \cite[Th\'{e}or\`{e}me 4.5]{Anantharaman-Delaroche:1987os}. 
\end{proof}

%

\begin{remark}\label{g wep vs cp wep}
In Theorem \ref{g wep vs wep}, we have compared the $G$-WEP for $A$ to the WEP for $A$ and amenability type conditions.  It is also natural to compare the $G$-WEP for $A$ to the WEP for the crossed products $A\rtimes_{\red} G$ and $A\rtimes_{\max} G$.

We first note that if $A$ is amenable and has the WEP, then $A\rtimes_\max G=A\rtimes_\red G$ has the WEP.   This was proved by Bhattarcharya and Farenick in \cite{Bhattacharya:2013sj}.  One can also give a short argument using that a $C^*$-algebra $B$ has the WEP if and only if 
$$
B\otimes_\max C^*(\free_\infty)=B\otimes C^*(\free_\infty)
$$
(see for example \cite[Corollary 13.2.5]{Brown:2008qy}).  If $G$ is exact, we already know from Theorem \ref{g wep vs wep} that $A$ has the $G$-WEP if and only if $A$ is amenable and has the WEP.  In particular, if $A$ has the $G$-WEP and $G$ is exact, then $A\rtimes_\max G=A\rtimes_\red G$ has the WEP.

On the other hand, is is shown in \cite[Proposition~5.4]{Buss:2018nm} that if $A\rtimes_\inj G$ has the WEP, then $A\rtimes_\max G=A\rtimes_\inj G$. Moreover, if $A\rtimes_\inj G$ has the WEP, then so does $A$ because $A$ is a $C^*$-subalgebra of $A\rtimes_\inj G$ with a conditional expectation $A\rtimes_\inj G\onto A$.  Hence if $G$ is exact and $A\rtimes_\red G$ has the WEP, then we have that $A\rtimes_\max G=A\rtimes_\inj G=A\rtimes_\red G$ and that $A$ has the WEP. 

Summarizing the above discussion, if $G$ is an exact group, then we know that 
$$
A \text{ has $G$-WEP} \quad \Rightarrow \quad  A\rtimes_{\red}G \text{ has WEP}
$$
and that 
$$
A\rtimes_{\red}G \text{ has WEP} \quad \Rightarrow \quad A \text{ has WEP and } A\rtimes_{\max}G=A\rtimes_r G.
$$
If $A$ is commutative, the latter condition also implies that $A$ is amenable  by Theorem \ref{com ex the}, and therefore that $A$ has $G$-WEP by Theorem \ref{g wep vs wep}.  The precise situation is not clear in general, however.

If $G$ is not exact, then things are murky.  For example, it is not clear whether the \cstar{}algebras $\ell^\infty(G)\rtimes_\max G$ or $\ell^\infty(G)\rtimes_\red G$ could have the WEP if the $G$-action on $A=\ell^\infty(G)$ is not amenable.
\end{remark}

\section{Hamana's theory of injective envelopes}\label{hamana sec}

In this section, we discuss the relation of the notion of injectivity that we have been using with Hamana's from \cite{Hamana:1985aa} (they turn out to be the same, fortunately).  We also use some of our work above to address some questions about injective envelopes that seem to be of interest in their own right.

Hamana's definition of $G$-injectivity is as follows.  Consider a diagram 
\begin{equation}\label{more inj}
\xymatrix{ C \ar@{-->}[dr]^-{\widetilde{\phi}} & \\ A \ar@{_(->}[u]^-\iota \ar[r]^-{\phi} & B }
\end{equation}
where $B$, $C$, and $A$ are operator systems equipped with $G$-actions by complete order automorphisms, $\iota$ is a complete order injection, and $\phi$ is a ucp $G$-map.  Then $B$ is \emph{$G$-injective} if the dashed arrow can be filled in with a ucp $G$-map.  

On the other hand, in Definition \ref{inj def}, we say that a $G$-$C^*$-algebra is \emph{injective} if in a diagram of the form \eqref{more inj} where $C$ is a $G$-$C^*$-algebra, $\iota$ is an injective equivariant $*$-homomorphism, and $\phi$ is the identity map, the dashed arrow can be filled in with a ccp $G$-map.

Now, both Hamana's definition and our definition make sense for unital $G$-algebras.  Fortunately, the two notions coincide (even with respect to their domains of definition: this follows as injectivity of a $G$-algebra $B$ in our sense forces $B$ to be unital by Lemma \ref{uni rem}, and injectivity of a $G$-operator system $B$ in Hamana's sense forces $B$ to admit a structure of a (unital) $G$-algebra by the proof of \cite[Theorem 15.2]{Paulsen:2003ib}).

\begin{proposition}\label{hamana vs inj}
A unital $G$-$C^*$-algebra $B$ is injective in the sense of Definition \ref{inj def} if and only if it is injective in the sense of Hamana.
\end{proposition}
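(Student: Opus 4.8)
The plan is to prove the two implications separately, using the easy direction to reduce the hard one to a single explicit injectivity computation.

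\emph{Hamana-injective $\Rightarrow$ Definition \ref{inj def}.} This is immediate. Suppose $B$ is $G$-injective in Hamana's sense and let $B\subseteq D$ be any $G$-embedding into a $G$-$C^*$-algebra. The inclusion $B\into D$ is a unital complete order injection and $\id_B\colon B\to B$ is a ucp $G$-map, so applying Hamana's extension property to the diagram with $A=B$, $C=D$ and $\phi=\id_B$ produces a ucp $G$-map $\widetilde\phi\colon D\to B$ restricting to the identity on $B$. By Tomiyama's theorem (\cite[Theorem 1.5.10]{Brown:2008qy}) this is an equivariant conditional expectation, so $B$ is injective in the sense of Definition \ref{inj def}.

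\emph{Definition \ref{inj def} $\Rightarrow$ Hamana-injective.} First I would embed $B$ into a concrete $G$-$C^*$-algebra that is $G$-injective in \emph{Hamana's} sense. Fix a faithful unital representation $\pi\colon B\to \Bd(K)$ (ignoring the action) and let $W:=\ell^\infty(G,\Bd(K))$ with the translation action $(\gamma_g f)(h):=f(g^{-1}h)$; the formula $\widetilde\pi(b)(g):=\pi(\alpha_{g^{-1}}(b))$ defines a unital equivariant $*$-embedding $B\into W$ exactly as in the proof of Lemma \ref{pro:G-WEP=>WEP}. The key computation is that $W$ is Hamana $G$-injective: given $G$-operator systems with an equivariant complete order injection $A\subseteq C$ and a ucp $G$-map $\phi\colon A\to W$, I would compose with evaluation at $e$ to get a (non-equivariant) ucp map $\phi_e\colon A\to \Bd(K)$, extend it to a ucp map $\psi\colon C\to \Bd(K)$ by Arveson's theorem (using that $\Bd(K)$ is injective), and then re-equivariantize by setting $\widetilde\phi(c)(h):=\psi(h^{-1}\cdot c)$. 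A direct check shows $\widetilde\phi$ is a ucp $G$-map extending $\phi$, the equivariance coming from the defining formula of the translation action and the extension property coming from equivariance of $\phi$.

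With $W$ in hand, the conclusion is formal. Since $B$ is injective in the sense of Definition \ref{inj def}, the embedding $\widetilde\pi\colon B\into W$ admits an equivariant conditional expectation $E\colon W\onto B$ splitting $\widetilde\pi$. Now given any Hamana diagram with operator systems $A\subseteq C$ and a ucp $G$-map $\phi\colon A\to B$, I would first extend the ucp $G$-map $\widetilde\pi\circ\phi\colon A\to W$ to a ucp $G$-map $\widetilde\psi\colon C\to W$ using Hamana-injectivity of $W$, and then set $\widetilde\phi:=E\circ\widetilde\psi$. This is a ucp $G$-map $C\to B$, and on $A$ it equals $E\circ\widetilde\pi\circ\phi=\phi$; hence $B$ is $G$-injective in Hamana's sense.

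I expect the only real work to be the middle paragraph, namely verifying that $\ell^\infty(G,\Bd(K))$ is $G$-injective in Hamana's operator-system sense (as opposed to the $C^*$-algebraic sense of Example \ref{inj ex}); everything else is a diagram chase. The one point to be careful about there is checking that the re-equivariantized map $\widetilde\phi$ genuinely restricts to $\phi$ on $A$, which uses both that $\psi$ extends $\mathrm{ev}_e\circ\phi$ and that $\phi$ is itself equivariant.
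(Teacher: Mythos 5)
Your proof of the direction you call ``immediate'' (Hamana-injective $\Rightarrow$ Definition \ref{inj def}) has a genuine gap. Definition \ref{inj def} quantifies over \emph{all} $G$-embeddings $B\subseteq D$, and these need not be unital: $D$ may have no unit at all, and even when it does, the unit of $B$ need not map to $1_D$ --- the embedding of $B$ into its own unitisation $\widetilde{B}\cong B\oplus\C$ is already an example, and it is precisely this non-unital embedding that drives Lemma \ref{uni rem}. In that generality $D$ is not an operator system containing $B$ unitally, so the inclusion $B\into D$ is \emph{not} a unital complete order injection, and Hamana's extension property --- whose diagrams consist of $G$-operator systems and unital maps --- cannot be invoked the way you do. The repair is exactly what the paper's proof does: pass to unitisations. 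Apply Hamana's injectivity of $B$ to the unital $G$-embedding $\widetilde{B}\into\widetilde{D}$ together with the ucp $G$-map $\widetilde{B}\to B$ given by the canonical $*$-homomorphism $(b,\lambda)\mapsto b+\lambda 1_B$; the resulting ucp $G$-map $\widetilde{D}\to B$ restricts to the identity on $B$, so its restriction to $D$ is the desired equivariant ccp splitting, which is a conditional expectation by Tomiyama as you note.

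Your other direction (Definition \ref{inj def} $\Rightarrow$ Hamana-injectivity) --- the one you expected to be the real work --- is correct, and it is structurally the same as the paper's: where the paper cites \cite[Corollary 2.4]{Buss:2018nm} for a unital $G$-embedding of $B$ into some Hamana-$G$-injective algebra $B_H$, you construct such an algebra explicitly as $W=\ell^\infty(G,\Bd(K))$ and verify its Hamana-injectivity by evaluating at $e$, extending with Arveson, and re-equivariantizing via $\widetilde\phi(c)(h):=\psi(h^{-1}\cdot c)$. That computation is right (it is the standard argument behind the cited result, cf.\ \cite[Lemma 2.2]{Hamana:1985aa}), and it makes the proof self-contained; the formal conclusion $\widetilde{\phi}:=E\circ\widetilde{\psi}$ is then identical to the paper's. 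So the only correction needed is the unitisation step above.
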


\begin{proof}
First assume that $B$ satisfies Definition \ref{inj def}, so is in particular unital by Remark \ref{uni rem}.  In \cite[Corollary 2.4]{Buss:2018nm} (compare also \cite[Lemma 2.2]{Hamana:1985aa}) it is shown that any (unital) $G$-algebra $B$ admits a (unital) embedding $B\to B_H$ into a $G$-algebra $B_H$ that is injective in Hamana's sense.  Definition \ref{inj def} gives an equivariant ucp map $E:B_H\to B$ splitting this inclusion.  Consider now a diagram as in line \eqref{more inj} where $\phi$ and $\iota$ satisfy the conditions in Hamana's definition of injectivity.  Consider the diagram
$$
\xymatrix{ C \ar@{-->}[dr] \ar@{-->}[drr]^-{\widetilde{\psi}} & &  \\ A \ar@{_(->}[u]^-\iota \ar[r]_-{\phi} & B \ar[r] & B_H }.
$$
As $B_H$ is injective in Hamana's sense, the long diagonal arrow can be filled in with an equivariant ucp map, say $\widetilde{\psi}$.  The required map $\widetilde{\phi}$ can then be defined by $\widetilde{\phi}:=E\circ \widetilde{\psi}$; it is not difficult to check that this works.

Conversely, say $B$ is injective in Hamana's sense.  We need to show that any injective equivariant $*$-homomorphism $B\to C$ admits an equivariant ccp splitting.  We have an extended diagram
$$
\xymatrix{ \widetilde{C} \ar@{-->}[dr] & \\ \widetilde{B} \ar@{_(->}[u] \ar[r]^-{\phi} & B }
$$
where the vertical map is the unitisation of the map we started with, and  the horizontal map is the canonical projection of the unitisation of a unital $C^*$-algebra onto the original algebra (which is a $*$-homomorphism).  Thanks to Hamana's definition, the dashed arrow can be filled in with a ucp $G$-map; the restriction of this arrow to $C$ is the required map.
\end{proof}

We now turn to $G$-injective envelopes.  Recall that in \cite[Theorem 2.5]{Hamana:1985aa}, Hamana proves that every $G$-operator system (and in particular, every unital $G$-algebra) $A$ has a \emph{$G$-injective envelope} $I_G(A)$.  This is a $G$-algebra $I_G(A)$ which is $G$-injective, equipped with a canonical unital $G$-embedding $A\to I_G(A)$, and has the universal property that whenever $A\to B$ is a ucp $G$-map into an injective operator system, there is a unique equivariant ucp extension $I_G(A)\to B$.

The following theorem provides a nice addition to the equivalent conditions in Theorem \ref{big nuclear list}.

\begin{theorem}\label{inj env}
Let $G$ be an exact group, and let $A$ be a nuclear $G$-algebra.  The following are equivalent:
\begin{enumerate}[(i)]
\item \label{env inj} $A^{**}$ is $G$-injective;
\item \label{env wep} $A$ has the $G$-WEP;
\item \label{env emb} there is a $G$-embedding $I_G(A)\into A^{**}$ extending the inclusion $A\into A^{**}$;
\item \label{env rel inj} the inclusion $A\into I_G(A)$ is relatively weakly $G$-injective in the sense of Definition \ref{rel inj}.
\end{enumerate}
\end{theorem}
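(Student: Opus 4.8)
The plan is to take advantage of the fact that the equivalence of \eqref{env inj} and \eqref{env wep} is already in hand: it is precisely the equivalence of conditions \eqref{bn gwep} and \eqref{bn ginj} in Theorem~\ref{big nuclear list}, which uses both exactness of $G$ and nuclearity of $A$. It therefore remains only to weave the two new conditions into this equivalence, and I would do so by establishing the cycle
\[
\eqref{env inj}\ \Rightarrow\ \eqref{env emb}\ \Rightarrow\ \eqref{env rel inj}\ \Rightarrow\ \eqref{env wep},
\]
with \eqref{env wep} returning to \eqref{env inj} through Theorem~\ref{big nuclear list}. Of the three arrows above, two are soft and the substance lies in \eqref{env inj}$\Rightarrow$\eqref{env emb}.

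I would dispatch the soft arrows first. For \eqref{env emb}$\Rightarrow$\eqref{env rel inj}: a $G$-embedding $j\colon I_G(A)\into A^{**}$ extending $A\into A^{**}$ is in particular a ccp $G$-map whose restriction to $A$ is the canonical inclusion, so $j$ itself exhibits $A\into I_G(A)$ as relatively weakly $G$-injective in the sense of Definition~\ref{rel inj}; there is nothing further to verify. For \eqref{env rel inj}$\Rightarrow$\eqref{env wep}: fix a ccp $G$-map $P\colon I_G(A)\to A^{**}$ with $P|_A$ the canonical inclusion, and let $A\into B$ be an arbitrary $G$-embedding. Since $I_G(A)$ is $G$-injective, the canonical $G$-map $A\to I_G(A)$ extends (after the harmless unitization, if $A$ is non-unital) along $A\into B$ to a ccp $G$-map $q\colon B\to I_G(A)$; then $P\circ q\colon B\to A^{**}$ is a ccp $G$-map restricting to the canonical inclusion on $A$, which is exactly the $G$-WEP. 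Note that neither arrow uses nuclearity or exactness — these enter only through the return arrow from Theorem~\ref{big nuclear list}.

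The real content is \eqref{env inj}$\Rightarrow$\eqref{env emb}, and this is where I expect the main difficulty. Assuming \eqref{env inj}, the algebra $A^{**}$ is a $G$-injective $C^*$-algebra containing $A$, hence is its own $G$-injective envelope. I would run Hamana's construction of the $G$-injective envelope \emph{inside} $A^{**}$ (as in \cite[Theorem~2.5]{Hamana:1985aa}; compare \cite[Corollary~2.4]{Buss:2018nm}): this produces a minimal $A$-projection, that is, an idempotent ucp $G$-map $E\colon A^{**}\to A^{**}$ restricting to the identity on $A$, whose image $E(A^{**})$, equipped with the Choi--Effros product, is a copy of $I_G(A)$. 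The inclusion $E(A^{**})\into A^{**}$ is automatically a complete order injection, so the only thing standing between this and the desired $G$-embedding is multiplicativity.

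That multiplicativity is the crux. A priori $E(A^{**})$ carries only the Choi--Effros product $x\bullet y=E(xy)$, which need not agree with the product inherited from $A^{**}$, so the natural map $I_G(A)\to A^{**}$ is only guaranteed to be a unital complete order isomorphism onto its image and not, on the face of it, a \Star{}homomorphism. The key leverage is that here $A$ is a genuine $C^*$-subalgebra fixed by $E$: it therefore lies in the multiplicative domain of $E$, so $E$ is an $A$-bimodule map, and the $C^*$-level refinement of Hamana's theory (monotonicity of injective envelopes along the $C^*$-embedding $A\into A^{**}$, together with $I_G(A^{**})=A^{**}$) shows that the image $E(A^{**})$ is in fact multiplicatively closed in $A^{**}$; thus $\bullet$ coincides with the ambient product and $E(A^{**})$ is a $G$-invariant $C^*$-subalgebra. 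Granting this, the resulting equivariant injective \Star{}homomorphism $I_G(A)\into A^{**}$ extends $A\into A^{**}$ and gives \eqref{env emb}, closing the cycle.
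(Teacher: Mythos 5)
Your overall architecture---the cycle \eqref{env inj}$\Rightarrow$\eqref{env emb}$\Rightarrow$\eqref{env rel inj}$\Rightarrow$\eqref{env wep}, closed up by the equivalence of \eqref{env inj} and \eqref{env wep} from Theorem~\ref{big nuclear list}---is exactly the paper's, and your treatment of the two soft arrows and of the return arrow agrees with the paper's proof. The divergence, and the problem, is in \eqref{env inj}$\Rightarrow$\eqref{env emb}: the paper deduces this in one line from the universal property of $I_G(A)$ ($G$-injectivity of $A^{**}$ yields a ucp $G$-map $I_G(A)\to A^{**}$ extending $A\into A^{**}$, and rigidity of the envelope makes any such map a complete order injection), whereas you try to build the embedding from a minimal $A$-projection and then argue for multiplicativity.

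The gap is your claim that minimality of the $A$-projection $E\colon A^{**}\to A^{**}$ (together with ``monotonicity of injective envelopes'' and $I_G(A^{**})=A^{**}$) forces the range $E(A^{**})$ to be multiplicatively closed. This is false, already for the trivial (hence exact) group and a commutative nuclear algebra. Take $A=c$, the algebra of convergent sequences, so that $A^{**}\cong\ell^\infty(\N)\oplus\C$ with canonical inclusion $f\mapsto(f,\lim_n f(n))$, and $I(A)=\ell^\infty(\N)$ (this is injective, and it is a rigid extension of $c$ by the multiplicative domain argument applied to the minimal projections of $c_0$). Fix two distinct free ultrafilters $\mathcal{U},\mathcal{V}$ on $\N$, put $\mu:=\tfrac12(\lim_{\mathcal{U}}+\lim_{\mathcal{V}})$, and define
$$
E\colon A^{**}\to A^{**},\qquad E(g,t):=(g,\mu(g)).
$$
This is a ucp idempotent fixing $c$ (positivity suffices for complete positivity here, the target being commutative), and its range $R=\{(g,\mu(g)):g\in\ell^\infty(\N)\}$ is completely order isomorphic to $\ell^\infty(\N)=I(A)$ over $c$, since the first-coordinate projection and $g\mapsto(g,\mu(g))$ are mutually inverse ucp maps fixing $c$. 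An $A$-projection whose range is a rigid extension of $A$ is automatically minimal, so $E$ is a minimal $c$-projection. Yet $R$ is not closed under the ambient product: choosing $S\in\mathcal{U}\setminus\mathcal{V}$ and $g=\chi_S-\chi_{\N\setminus S}$, one has $\mu(g)=0$, so $(g,0)\in R$, while $(g,0)^2=(1,0)\notin R$ because $\mu(1)=1$. Thus minimality delivers exactly what Choi--Effros theory promises---a complete order embedding of the envelope extending $A\into A^{**}$---and nothing more; your (correct) observation that $A$ lies in the multiplicative domain of $E$ only shows that $R$ is an $A$-bimodule and that the Choi--Effros product agrees with the ambient one when one factor lies in $A$.

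Note that condition \eqref{env emb} is still true in this example: $g\mapsto(g,\lim_{\mathcal{U}}g)$ is a unital $*$-embedding of $I(A)$ into $A^{**}$ extending the inclusion. But it is a \emph{different} extension from the one your construction produces, and nothing in your argument selects a multiplicative one. This is the missing idea: since a ``$G$-embedding'' in this paper means an equivariant injective $*$-homomorphism, proving \eqref{env inj}$\Rightarrow$\eqref{env emb} requires exhibiting a particular extension that is multiplicative, not arguing that an arbitrary minimal projection works. (You are right that multiplicativity is the crux, and the paper's one-line appeal to the universal property is itself silent on this point; but the repair you propose is precisely the step that fails.)
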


\begin{proof}
The equivalence of \eqref{env inj} and \eqref{env wep} is already proved in Theorem \ref{big nuclear list}.  Starting with \eqref{env inj}, note that if $A^{**}$ is $G$-injective, then the universal property of $I_G(A)$
implies that we have a $G$-embedding $I_G(A)\into A^{**}$ extending the canonical embedding $A\into A^{**}$. Hence \eqref{env inj} implies \eqref{env emb}. It is clear that \eqref{env emb} implies \eqref{env rel inj}.  

Finally, we claim that \eqref{env rel inj} implies \eqref{env wep}. Indeed, if $A$ embeds into some $G$-algebra $B$, since $I_G(A)$ is $G$-injective, the inclusion $A\into I_G(A)$ extends to a ccp $G$-map $B\to I_G(A)$. Composing this with the map $I_G(A)\to A^{**}$ given by \eqref{env rel inj} yields the desired ccp $G$-map $B\to A^{**}$ extending the inclusion $A\into A^{**}$.
\end{proof}

We conclude this section with two results that can be seen as generalizations of results of Kalantar and Kennedy in their seminal work on the Furstenberg boundary \cite{Kalantar:2014sp}.  The only new idea needed for the proofs in both cases is Corollary \ref{cor:G-WEP=>amenable}.

The first generalization of the work of Kalantar and Kennedy is as follows.  In \cite[Theorem 4.5]{Kalantar:2014sp}, Kalantar and Kennedy prove that $G$ is exact if only if its action on the Furstenberg boundary $\partial_F G$ is amenable. Recalling that $\cont(\partial_F G)$ is the $G$-injective envelope of $\C$ (see \cite{Kalantar:2014sp}*{Theorem~3.11}), the following result is a natural extension.

\begin{theorem}\label{inj env ex}
The following are equivalent for a discrete group $G$.
\begin{enumerate}[(i)]
\item $G$ is exact;
\item $G$ acts strongly amenably (respectively, amenably, or C-amenably) on the injective envelope $I_G(A)$ of every nonzero $G$-algebra $A$.
\item $G$ acts strongly amenably (respectively, amenably, or C-amenably) on the injective envelope $I_G(A)$ of some nonzero $G$-algebra $A$.
\end{enumerate}
\end{theorem}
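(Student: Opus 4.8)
The plan is to organize the three parallel ``(respectively)'' statements around the general chain of implications
\[
\text{strong amenability}\ \Rightarrow\ \text{amenability}\ \Rightarrow\ \text{C-amenability},
\]
where the first is recorded just after Definition~\ref{def-amenable} and the second is Remark~\ref{ca rem}. Because of this chain, to establish (i)$\Rightarrow$(ii) in all three cases it is enough to prove the \emph{strongest} conclusion---that exactness of $G$ makes $I_G(A)$ strongly amenable for every nonzero $A$---and to establish (iii)$\Rightarrow$(i) in all three cases it is enough to prove it from the \emph{weakest} hypothesis---that C-amenability of $I_G(A)$ for some nonzero $A$ forces $G$ exact. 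The implication (ii)$\Rightarrow$(iii) is then trivial in each case, since ``for every nonzero $A$'' specializes to ``for some nonzero $A$'' (nonzero $G$-algebras certainly exist, e.g.\ $\C$ with the trivial action).

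For (i)$\Rightarrow$(ii) I would fix a nonzero $G$-algebra $A$ and use that its injective envelope $I_G(A)$ is by construction $G$-injective; Lemma~\ref{uni rem} then shows $I_G(A)$ is unital, and it is nonzero because $A\neq 0$ embeds into it. Assuming $G$ exact, I can apply Corollary~\ref{cor:G-WEP=>amenable} directly to the $G$-injective algebra $I_G(A)$ to conclude it is strongly amenable, hence also amenable and C-amenable by the chain above. For (iii)$\Rightarrow$(i), I would take the nonzero $A$ provided by the hypothesis; in all three cases $I_G(A)$ is at least C-amenable, and it is again unital (Lemma~\ref{uni rem}) and nonzero. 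Thus $I_G(A)$ is a unital, nonzero, C-amenable $G$-algebra, and part~(iii) of Theorem~\ref{com amen ex} immediately yields exactness of $G$.

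The substantive mathematics is entirely carried by Corollary~\ref{cor:G-WEP=>amenable} and by Theorem~\ref{com amen ex}, so I do not anticipate a serious obstacle in assembling the argument. The only points that need care are purely structural: recording that an injective envelope is automatically unital (via Lemma~\ref{uni rem}), which is precisely what licenses the application of Theorem~\ref{com amen ex}, and correctly threading the three ``(respectively)'' assertions through a single cycle by always proving the strong implication in the downward direction and the weak implication in the upward direction. As a sanity check, specializing to $A=\C$ recovers the Kalantar--Kennedy statement that $G$ is exact if and only if it acts amenably on the Furstenberg boundary $\cont(\partial_F G)=I_G(\C)$.
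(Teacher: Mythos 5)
Your proposal is correct and follows essentially the same route as the paper: the paper likewise deduces (i)$\Rightarrow$(ii) by applying Corollary~\ref{cor:G-WEP=>amenable} to the $G$-injective algebra $I_G(A)$, and deduces (iii)$\Rightarrow$(i) by noting that $I_G(A)$ is unital (Lemma~\ref{uni rem}) and invoking Theorem~\ref{com amen ex}, with the chain strong amenability $\Rightarrow$ amenability $\Rightarrow$ C-amenability handling the three parallel assertions. Your write-up is slightly more explicit about nonzero-ness of $I_G(A)$ and the trivial step (ii)$\Rightarrow$(iii), but the mathematical content is identical.
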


\begin{proof}
Assume $G$ is exact, and let $A$ be a $G$-algebra.  Then as $I_G(A)$ is $G$-injective, Corollary \ref{cor:G-WEP=>amenable} gives that $I_G(A)$ is strongly amenable. Hence (i) implies (ii).  Since every injective $C^*$-algebra is unital by  Lemma \ref{uni rem}, 
 (iii) implies (i) follows from Theorem \ref{com amen ex}.\end{proof}

The second concerns a conjecture of Ozawa.  In \cite{Ozawa:2007ty}, Ozawa conjectures that every exact \cstar{}algebra $B$ embeds into a nuclear \cstar{}algebra $N(B)$ with $B\sbe N(B)\sbe I(B)$.  Here $I(B)$ denotes the \emph{injective envelope} of $B$, see \cite{Hamana:1979aa}, which is the natural non-equivariant version of the $G$-injective envelope discussed above.

The above conjecture was established for $B=C^*_r(G)$ for any discrete group $G$ by Kalantar and Kennedy in \cite[Theorem 1.3]{Kalantar:2014sp} using the Furstenberg boundary $\partial_F G$.

Using the above observations we can prove Ozawa's conjecture for all crossed products of commutative $G$-algebras by exact discrete groups.

\begin{corollary}\label{oz con cor}
Ozawa's conjecture holds for all \cstar{}algebras $B$ of the form $B=A\rtimes_\red G$, where $A$ is a commutative $G$-algebra and $G$ is an exact group.
\end{corollary}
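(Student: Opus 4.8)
The plan is to prove Ozawa's conjecture by exhibiting, for $B := A \rtimes_\red G$, a nuclear intermediate $C^*$-algebra $N$ with $B \subseteq N \subseteq I(B)$; exactness of $B$ then comes for free, as a $C^*$-subalgebra of a nuclear algebra. Following the strategy of Kalantar--Kennedy (who treat the case $A = \C$, where $I_G(\C) = \cont(\partial_F G)$), the natural candidate is
$$N := I_G(A) \rtimes_\red G,$$
the reduced crossed product of the $G$-injective envelope. Everything reduces to three points: that $N$ is nuclear, that $B$ embeds into $N$, and that $N$ embeds into $I(B)$.

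For nuclearity of $N$ I would first check that $I_G(A)$ is commutative, hence nuclear. Since $A$ is commutative, its (non-equivariant) injective envelope $I(A)$ is commutative, so by Example \ref{inj ex} the algebra $\ell^\infty(G, I(A))$ with the translation action is a commutative $G$-injective $G$-algebra, and $a \mapsto (g \mapsto \alpha_{g^{-1}}(a))$ is a $G$-embedding $A \hookrightarrow \ell^\infty(G, I(A))$. By the universal property of the $G$-injective envelope this extends to a complete order $G$-embedding $I_G(A) \hookrightarrow \ell^\infty(G, I(A))$, and since the target is commutative the induced Choi--Effros product on $I_G(A)$ is commutative. As $G$ is exact, Theorem \ref{inj env ex} shows $G$ acts amenably on $I_G(A)$; amenability of the action on the nuclear algebra $I_G(A)$ then gives nuclearity of $N = I_G(A)\rtimes_\red G$ by \cite[Th\'{e}or\`{e}me 4.5]{Anantharaman-Delaroche:1987os}. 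The embedding $B = A \rtimes_\red G \hookrightarrow I_G(A) \rtimes_\red G = N$ is immediate from functoriality of the reduced crossed product for $G$-embeddings (represent $I_G(A)$ faithfully and restrict the regular representation to $A$).

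The main obstacle is the embedding $N \hookrightarrow I(B)$, and here I would run the Kalantar--Kennedy argument. The canonical unitaries $u_g \in B \subseteq I(B)$ implement an inner $G$-action $\beta_g = \Ad(u_g)$ on the injective envelope $I(B)$, for which the inclusion $A \hookrightarrow I(B)$ is equivariant. Using injectivity of $I(B)$ I would first extend this equivariant inclusion to an a priori non-equivariant ucp map $I_G(A) \to I(B)$, and then average it into an equivariant one using amenability of the $G$-action on $I_G(A)$ together with the innerness of $\beta$, exactly as in the proof of Lemma \ref{non-equi to equi}. Rigidity (essentiality) of the $G$-injective envelope $A \hookrightarrow I_G(A)$ forces the resulting equivariant ucp map to be a complete order embedding, and commutativity of $I_G(A)$ makes it multiplicative, so its image is a $G$-invariant commutative copy of $I_G(A)$ inside $I(B)$. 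This copy together with the $u_g$ forms a covariant pair, which — since the action is amenable, so that $\rtimes_\max = \rtimes_\red$ — integrates to the sought copy of $I_G(A) \rtimes_\red G$ in $I(B)$.

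The delicate points, where I expect the real work to lie, are \emph{(i)} checking that the averaged map is genuinely a $*$-embedding rather than merely completely positive, which rests on the rigidity of the injective envelope together with commutativity of $I_G(A)$, and \emph{(ii)} verifying that the resulting covariant representation of $I_G(A) \rtimes_\red G$ on $I(B)$ is faithful, i.e.\ that the generated $C^*$-subalgebra is the reduced crossed product and not a proper quotient. Both are exactly what Kalantar--Kennedy establish in the case $A = \C$, and the commutative case here should be a direct adaptation. Assembling $B \subseteq N \subseteq I(B)$ with $N$ nuclear then yields Ozawa's conjecture for $B$.
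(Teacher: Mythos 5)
Your overall skeleton coincides with the paper's: the candidate $N=I_G(A)\rtimes_\red G$, the commutativity (hence nuclearity) of $I_G(A)$, and the deduction of nuclearity of $N$ from exactness via amenability of the action on $I_G(A)$ (Theorem \ref{inj env ex} together with \cite[Th\'{e}or\`{e}me~4.5]{Anantharaman-Delaroche:1987os}) are exactly what the paper does. The difference is in how the sandwich $B\sbe N\sbe I(B)$ is obtained: the paper simply cites Hamana, \cite[Theorem 3.4]{Hamana:1985aa}, which gives $A\rtimes_\red G\sbe I_G(A)\rtimes_\red G\sbe I(A\rtimes_\red G)$ for \emph{every} discrete group, with no exactness or amenability hypothesis. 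It is precisely the step where you replace this citation by your own argument that contains a genuine gap.

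The averaging step does not work as described. The proof of Lemma \ref{non-equi to equi} needs two things that are unavailable in your situation: the target of the map being averaged must be a von Neumann algebra (the net $\psi_i$ is compressed to a point-ultraweak limit point using \cite[Theorem 1.3.7]{Brown:2008qy}), and the witnesses of amenability must take values in the \emph{centre of that target}. Here the target $I(B)$ is only an injective $C^*$-algebra --- monotone complete, but in general not a dual space --- so there is no topology in which to extract a limit of averaged maps; and the amenability you have is that of the \emph{source} $I_G(A)$, which is the opposite configuration. Its witnesses $\xi_i\colon G\to Z(I_G(A))$ would have to be transported into $I(B)$ through the non-equivariant ucp extension $\Phi\colon I_G(A)\to I(B)$, which is not multiplicative off $A$; Kadison--Schwarz then only yields $\Phi(\xi_i(g))^*\Phi(\xi_i(g))\leq \Phi(\xi_i(g)^*\xi_i(g))$, an inequality in the useless direction, so one cannot even show that the averaged maps converge to the inclusion on $A$, let alone that the limit is equivariant. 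The mechanism that actually makes the Hamana/Kalantar--Kennedy argument run is not amenability but \emph{innerness}: since the $G$-action on $B=A\rtimes_\red G$ is implemented by the canonical unitaries, $I(B)$ is automatically $G$-injective, and the equivariant ucp map $I_G(A)\to I(B)$ then exists outright, with no averaging at all --- this is the heart of \cite[Theorem 3.4]{Hamana:1985aa} and is why that theorem needs no hypothesis on $G$. Note also that your appeal to $\rtimes_\max=\rtimes_\red$ does not settle faithfulness of the integrated form: a covariant pair with injective coefficient map can integrate to a non-faithful representation of the reduced crossed product even when maximal and reduced norms agree (consider the trivial character of $C^*_r(G)$ for $G$ amenable); faithfulness comes from a conditional expectation argument, again part of Hamana's theorem. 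In short, the two ``delicate points'' you defer to an adaptation of \cite{Kalantar:2014sp} are exactly the content of \cite[Theorem 3.4]{Hamana:1985aa}, which you could (and should) simply cite as the paper does, while the new argument you propose in their place does not go through.
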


\begin{proof}
Let $I_G(A)$ be the $G$-injective envelope of $A$. Since $A$ is commutative, so is $I_G(A)$.  In particular, $I_G(A)$ is a nuclear \cstar{}algebra.  By Theorem \ref{inj env ex} (or Corollary~\ref{cor:G-WEP=>amenable}), $I_G(A)$ is (strongly) amenable, so that the crossed product $I_G(A)\rtimes_\red G$ is nuclear by \cite[Th\'{e}or\`{e}me 4.5]{Anantharaman-Delaroche:1987os} (see also Theorem \ref{big nuclear list} above).  On the other hand, by Hamana's results in \cite[Theorem 3.4]{Hamana:1985aa} we have that 
$$
A\rtimes_\red G\sbe I_G(A)\rtimes_\red G\sbe I(A\rtimes_\red G),
$$
so that Ozawa's conjecture holds with $N(B)=I_G(A)\rtimes_\red G$.
\end{proof}

The above proof carries over to every $G$-algebra $A$ for which $I_G(A)$ is nuclear.  However, injective $C^*$-algebras are rarely nuclear outside of the commutative case, so we thought it seemed simpler to state the result when $A$ (and therefore also $I_G(A)$) is commutative.

  
  \begin{bibdiv}
\begin{biblist}

\bib{Abadie:2019kc}{unpublished}{
      author={Abadie, Fernando},
      author={Buss, Alcides},
      author={Ferraro, Dami\'{a}n},
       title={Amenability and approximation properties for partial actions and
  {F}ell bundles},
        date={2019},
        note={Preprint},
}

\bib{Anantharaman-Delaroche:1979aa}{article}{
      author={Anantharaman-Delaroche, Claire},
       title={Action moyennable d'un groupe localement compact sur une
  alg\'{e}bre de von {N}eumann},
        date={1979},
     journal={Math. Scand.},
      volume={45},
       pages={289\ndash 304},
}

\bib{Anantharaman-Delaroche:1982aa}{article}{
      author={Anantharaman-Delaroche, Claire},
       title={Action moyennable d'un groupe localement compact sur une
  alg\'{e}bre de von {N}eumann {II}},
        date={1982},
     journal={Math. Scand.},
      volume={50},
       pages={251\ndash 268},
}

\bib{Anantharaman-Delaroche:1987os}{article}{
      author={Anantharaman-Delaroche, Claire},
       title={Syst\`{e}mes dynamiques non commutatifs et moyennabilit\'{e}},
        date={1987},
     journal={Math. Ann.},
      volume={279},
       pages={297\ndash 315},
}

\bib{Anantharaman-Delaroche:2002ij}{article}{
      author={Anantharaman-Delaroche, Claire},
       title={Amenability and exactness for dynamical systems and their
  ${C}^*$-algebras},
        date={2002June},
     journal={Trans. Amer. Math. Soc.},
      volume={354},
       pages={4153\ndash 4178},
}

\bib{Bhattacharya:2013sj}{article}{
      author={Bhattacharya, Angshuman},
      author={Farenick, Douglas},
       title={Crossed products of ${C^*}$-algebras with the weak expectation
  property},
        date={2013},
     journal={New York J. Math.},
      volume={19},
       pages={423\ndash 429},
}

\bib{Brown:2008qy}{book}{
      author={Brown, Nathanial},
      author={Ozawa, Narutaka},
       title={${C}^*$-algebras and finite-dimensional approximations},
      series={Graduate Studies in Mathematics},
   publisher={American Mathematical Society},
        date={2008},
      volume={88},
}

\bib{Buss:2014aa}{article}{
      author={Buss, Alcides},
      author={Echterhoff, Siegfried},
      author={Willett, Rufus},
       title={Exotic crossed products and the {B}aum-{C}onnes conjecture},
        date={2018},
     journal={J. Reine Angew. Math.},
      volume={740},
       pages={111\ndash 159},
         url={https://doi.org/10.1515/crelle-2015-0061},
}

\bib{Buss:2018nm}{unpublished}{
      author={Buss, Alcides},
      author={Echterhoff, Siegfried},
      author={Willett, Rufus},
       title={The maximal injective crossed product},
        date={2018},
        note={arXiv:1808.06804 (to appear in Ergodic Theory \& Dyn. Systems)},
}

\bib{Buss:2018pw}{article}{
      author={Buss, Alcides},
      author={Echterhoff, Siegfried},
      author={Willett, Rufus},
       title={The minimal exact crossed product},
        date={2018},
     journal={Doc. Math.},
      volume={23},
       pages={2043\ndash 2077},
}

\bib{Exel:2014rp}{book}{
      author={Exel, Ruy},
       title={Partial dynamical systems and {F}ell bundles},
        date={2014},
}

\bib{Guentner:2022hc}{article}{
      author={Guentner, Erik},
      author={Kaminker, Jerome},
       title={Exactness and the {N}ovikov conjecture},
        date={2002},
     journal={Topology},
      volume={41},
      number={2},
       pages={411\ndash 418},
}

\bib{Haagerup:1975xh}{article}{
      author={Haagerup, Uffe},
       title={The standard form of von {N}eumann algebras},
        date={1975},
     journal={Math. Scand.},
      volume={37},
       pages={271\ndash 283},
}

\bib{Hamana:1979aa}{article}{
      author={Hamana, Masamichi},
       title={Injective envelopes of ${C^*}$-algebras},
        date={1979},
     journal={J. Math. Soc. Japan},
      volume={15},
      number={4},
       pages={181\ndash 197},
}

\bib{Hamana:1985aa}{article}{
      author={Hamana, Masamichi},
       title={Injective envelopes of ${C^*}$-dynamical systems},
        date={1985},
     journal={Tohoku Math. J.},
      volume={37},
       pages={463\ndash 487},
}

\bib{Kalantar:2014sp}{article}{
      author={Kalantar, Mehrdad},
      author={Kennedy, Matthew},
       title={Boundaries of reduced ${C}^*$-algebras of discrete groups},
        date={2017},
     journal={J. Reine Angew. Math.},
      volume={727},
       pages={247\ndash 267},
}

\bib{Kirchberg:1993aa}{article}{
      author={Kirchberg, Eberhard},
       title={On non-semisplit extensions, tensor products, and exactness of
  group ${C^*}$-algebras},
        date={1993},
     journal={Invent. Math.},
      volume={112},
       pages={449\ndash 489},
}

\bib{Lance:1973aa}{article}{
      author={Lance, E.~Christopher},
       title={On nuclear ${C^*}$-algebras},
        date={1973},
     journal={J. Funct. Anal.},
      volume={12},
       pages={157\ndash 176},
}

\bib{Lance:1995ys}{book}{
      author={Lance, E.~Christopher},
       title={Hilbert {$C^*$}-modules (a toolkit for operator algebraists)},
   publisher={Cambridge University Press},
        date={1995},
}

\bib{Matsumura:2012aa}{article}{
      author={Matsumura, Masayoshi},
       title={A characterization of amenability of group actions on
  ${C}^*$-algebras},
        date={2014},
     journal={J. Operator Theory},
      volume={72},
      number={1},
       pages={41\ndash 47},
}

\bib{Ozawa:2000th}{article}{
      author={Ozawa, Narutaka},
       title={Amenable actions and exactness for discrete groups},
        date={2000},
     journal={C. R. Acad. Sci. Paris S{\'e}r. I Math.},
      volume={330},
       pages={691\ndash 695},
}

\bib{Ozawa:2007ty}{article}{
      author={Ozawa, Narutaka},
       title={Boundaries of recduced free group ${C^*}$-algebras},
        date={2007},
     journal={Bull. London Math. Soc.},
      volume={39},
       pages={35\ndash 38},
}

\bib{Paulsen:2003ib}{book}{
      author={Paulsen, Vern},
       title={Completely bounded maps and operator algebras},
   publisher={Cambridge University Press},
        date={2003},
}

\bib{Rordam:2010kx}{article}{
      author={R\o{}rdam, Mikael},
      author={Sierakowski, Adam},
       title={Purely infinite ${C}^*$-algebras arising from crossed products},
        date={2012},
     journal={Ergodic Theory Dynam. Systems},
      volume={32},
       pages={273\ndash 293},
}

\bib{Suzuki:2018qo}{unpublished}{
      author={Suzuki, Yuhei},
       title={Simple equivariant ${C^*}$-algebras whose full and reduced
  crossed products coincide},
        date={2018},
        note={arXiv:1801.06949v1, to appear in JNCG},
}

\bib{Willett:2009rt}{inproceedings}{
      author={Willett, Rufus},
       title={Some notes on property {A}},
        date={2009},
   booktitle={Limits of graphs in group theory and computer science},
      editor={Arzhantseva, Goulnara},
      editor={Valette, Alain},
   publisher={EPFL press},
       pages={191\ndash 281},
}

\end{biblist}
\end{bibdiv}

\end{document}